\theoremstyle{plain}
\newtheorem{theorem}{Theorem}[section]
\newtheorem{lemma}[theorem]{Lemma}
\newtheorem{proposition}[theorem]{Proposition}
\newtheorem{corollary}[theorem]{Corollary}
\theoremstyle{definition}
\newtheorem{definition}[theorem]{Definition}
\theoremstyle{remark}
\newtheorem{remark}[theorem]{Remark}
\newcommand{\AM}{\mathrm{AM}}
\newcommand{\cF}{\mathcal{F}}
\newcommand{\N}{\mathbb{N}}
\newcommand{\czerozero}{c_{00}}
\newcommand{\hatot}{\widehat{\otimes}}
\renewcommand{\le}{\leqslant}
\renewcommand{\ge}{\geqslant}
\newcommand{\supp}{\operatorname{supp}}
\newcommand{\ad}{\mathrm{ad}}
\newcommand{\WAM}{\mathrm{WAM}}  % weak amenability constant (quantitative)
\newcommand{\Zc}{\mathrm{Z}} % centre of a bimodule (avoid \mathbb{Z} clash if you use it)
\title[Amenability of unconditional sums]{Amenability constants for unconditional sums\\ of Banach algebras}
\author[T.~Kania]{Tomasz Kania}
\address[T.~Kania]{Mathematical Institute\\Czech Academy of Sciences\\\v{Z}itn\'a 25 \\115 67 Praha 1\\Czech Republic and Institute of Mathematics and Computer Science\\ Jagiellonian University\\ {\L}ojasiewicza 6, 30-348 Krak\'{o}w, Poland}
\email{kania@math.cas.cz, tomasz.marcin.kania@gmail.com}
\thanks{RVO: 67985840.}
\author{Jerzy K\c akol}
\address[J.~K\c akol]{ Faculty of Mathematics and Informatics, A. Mickiewicz University,
61-614 Pozna\'{n}, Poland}
\email{kakol@amu.edu.pl}
\date{\today}
\subjclass[2020]{46H20, 46M18, 46B42}
\keywords{amenability constant, weak amenability, Banach algebra, unconditional sum, $c_0$-sum, $E$-sum, James sums}
\thanks{Tomasz Kania is the corresponding author.}
\begin{document}
%====================== Abstract ======================%
\begin{abstract}
We study Johnson amenability for unconditional direct sums of Banach algebras.
Given a family $(A_i)_{i\in I}$ of Banach algebras and a Banach sequence lattice $E$ on~$I$,
the $E$-sum $\bigl(\bigoplus_{i\in I} A_i\bigr)_{\!E}$ carries a natural Banach algebra structure
via coordinatewise multiplication.
Under the hypothesis that $C_E := \sup\{\|\chi_F\|_E: F\subseteq I\text{ finite}\}<\infty$,
we prove that this $E$-sum is amenable if and only if the amenability constants of the summands
are uniformly bounded, and we establish the two-sided estimate
\[
\sup_{i\in I}\AM(A_i) \;\le\; \AM\Bigl(\bigl(\textstyle\bigoplus_{i\in I} A_i\bigr)_{\!E}\Bigr)
\;\le\; C_E^2\,\sup_{i\in I}\AM(A_i).
\]
We show that the factor $C_E^2$ is sharp by exhibiting finite-dimensional examples where equality holds.
We further prove that finiteness of $C_E$ is necessary whenever infinitely many summands are non-zero
and the sum admits a bounded approximate identity.

As applications, we recover the classical formula
$\AM\bigl(c_0\text{-}\bigoplus_{i\in I} A_i\bigr) = \sup_{i\in I}\AM(A_i)$
for arbitrary (possibly uncountable) index sets, extend it to weighted $c_0$-spaces,
and characterise amenability for Orlicz sequence algebra sums.
We also record how these unconditional criteria give obstructions within the
conditional framework of James-type $J$-sums.

Finally, we investigate weak amenability of $E$-sums.
We prove that weak amenability passes to summands, that $E$-sums of commutative
weakly amenable algebras are weakly amenable, and---contrasting sharply with the
Johnson amenability picture---that for $1 < p < \infty$, the $\ell_p$-sum of
infinitely many copies of a non-commutative weakly amenable algebra fails to be
weakly amenable.
In the $c_0$-type regime ($C_E < \infty$), we establish two-sided estimates
for weak amenability constants with constants depending only on $C_E$.
\end{abstract}
\maketitle

%====================== Introduction ======================%
\section{Introduction}

Unconditional direct sums provide one of the most natural mechanisms for constructing
large Banach spaces and Banach algebras from smaller building blocks.
If $(A_i)_{i\in I}$ is a family of Banach algebras and $E$ is a Banach sequence lattice on~$I$,
the $E$-sum $\bigl(\bigoplus_{i\in I} A_i\bigr)_{\!E}$ inherits a canonical Banach algebra structure
through coordinatewise multiplication.
A fundamental question is then: which homological properties of the summands transfer to the sum,
and under what conditions?

The notion of amenability for Banach algebras, introduced by Johnson in his seminal
memoir~\cite{Johnson1972Cohomology}, has become one of the central themes in the
homological theory of Banach algebras.
Johnson's celebrated theorem characterising amenable group algebras $L^1(G)$ as precisely
those arising from amenable locally compact groups~$G$ established a deep link between
abstract harmonic analysis and Banach algebra cohomology.
Subsequent decades witnessed remarkable developments: Connes' characterisation of
injective von Neumann algebras~\cite{Connes1976}, Haagerup's proof that the reduced
$C^*$-algebra of a free group is not amenable~\cite{Haagerup1983},
and the systematic study of amenability constants initiated by
Johnson~\cite{Johnson1994} and pursued by many authors;
see the monographs~\cite{Helemskii1989,Dales2000,Runde2002Lectures,Runde2020ABA}
for comprehensive treatments and further references.
More recently, numerous variants such as approximate amenability~\cite{GhahramaniLoy2004},
character amenability~\cite{SanganiMonfared2008}, and module amenability~\cite{Amini2004} have
been introduced and actively studied.

Johnson amenability~\cite{Johnson1972Cohomology} is a particularly robust homological invariant,
admitting an elegant characterisation via virtual diagonals in the bidual of the projective tensor product.
For the $c_0$-sum of a \emph{sequence} of Banach algebras, a folklore result
(recorded in Runde's monograph~\cite{Runde2020ABA}) asserts that the amenability constant of the sum
equals the supremum of the amenability constants of the summands.
Our first objective is to provide a self-contained proof of this formula that applies verbatim
to arbitrary index sets---including uncountable ones---and, more generally,
to any Banach sequence lattice~$E$ satisfying the uniform boundedness condition
\[
C_E := \sup\bigl\{\|\chi_F\|_E : F \subseteq I \text{ finite}\bigr\} < \infty.
\]
This condition captures the requirement that $E$ behaves uniformly like $c_0$ on finite supports.
The extremal case $C_E=1$ occurs precisely when $E=c_0(I)$ with its usual supremum norm,
while the more general hypothesis $C_E<\infty$ allows weighted variants and other (equivalent)
lattice renormings of $c_0(I)$.

Our main result (Theorem~\ref{thm:main}) establishes that, under the hypothesis $C_E < \infty$,
the $E$-sum is amenable if and only if $\sup_{i \in I} \AM(A_i) < \infty$,
with the quantitative estimate
\[
\sup_{i\in I}\AM(A_i) \;\le\; \AM(A) \;\le\; C_E^2\,\sup_{i\in I}\AM(A_i).
\]
We prove that the constant $C_E^2$ cannot, in general, be replaced by $C_E$:
for the finite $\ell_2^n$-sum of $n$ copies of $\mathbb{C}$, one has $\AM(A) = n = C_E^2$
(Proposition~\ref{prop:CE2-sharp}).
Conversely, we show that amenability of an $E$-sum with infinitely many non-zero summands
forces $C_E < \infty$ (Proposition~\ref{prop:CE-necessary}), so the finiteness hypothesis is
essentially optimal.

A second goal of this paper is conceptual.
Certain constructions in Banach space theory are \emph{conditional} rather than unconditional;
the prototypical example is Bellenot's James-type $J$-sum~\cite{Bellenot1982},
which underpins the Lindenstrauss construction of spaces $Y$ with $Y^{**}/Y$ isomorphic to a prescribed separable space. Even in such conditional settings, unconditional $E$-sums frequently appear as coordinate algebras or natural subalgebras.

We therefore record how the unconditional criteria give quotient obstructions inside larger
conditional structures, such as Bellenot $J$-sums.  We do not use, or assert, the existence
of non-trivial amenable $J$-sum algebras; the $J$-sum discussion is formulated as a source
of necessary conditions and non-amenability tests (Corollary~\ref{cor:Esum-detected}).

We highlight explicit consequences for Banach-algebra-valued sequence algebras.
When the family $(A_i)_{i \in I}$ is constant, \emph{i.e.}, $A_i = B$ for all $i$, the $E$-sum $B^{(E)}$ may be viewed as a tensor-product-type construction (Remark~\ref{rem:tensor-viewpoint}).
Corollary~\ref{cor:Piszczek} then provides a Banach-algebraic counterpart of results
in the spirit of Piszczek's work on amenability of K\"othe co-echelon algebras~\cite{Piszczek2019}.

A third goal is to investigate the extent to which these results have analogues for
\emph{weak amenability}: the property that every bounded derivation into the dual module is inner.
We prove that weak amenability passes from an $E$-sum to its coordinate algebras
(Proposition~\ref{prop:WA-to-summands}), and that $E$-sums of commutative weakly amenable
algebras are weakly amenable (Theorem~\ref{thm:WA-counterpart}(2)).
However, the picture for non-commutative summands is strikingly different from Johnson ame\-na\-bi\-li\-ty:
for $1 < p < \infty$, the $\ell_p$-sum of infinitely many copies of a non-commutative weakly amenable
algebra \emph{fails} to be weakly amenable (Theorem~\ref{thm:WA-counterpart}(3)),
a phenomenon first observed by Koczorowski and Piszczek~\cite{KoczorowskiPiszczek2024WeakAmenability}
for constant families.

In the $c_0$-type regime ($C_E < \infty$), we establish a quantitative counterpart:
weak amenability of the $E$-sum is equivalent to uniform weak amenability of the summands,
with two-sided estimates for the weak amenability constant whose constants depend only on $C_E$
(Corollary~\ref{cor:WAM-CE}).

\medskip
\noindent\textbf{Organisation.}
Section~\ref{sec:amenability-constants} recalls virtual diagonals, amenability constants,
and two standard permanence results (quotients and dense unions).
Section~\ref{sec:E-sums} introduces Banach sequence lattices and the associated $E$-sums.
The main theorem on Johnson amenability is proved in Section~\ref{sec:main}.
Section~\ref{sec:applications} presents applications: $c_0$-sums over arbitrary index sets,
weighted $c_0$-spaces, Orlicz sequence spaces, the sharpness of the constant $C_E^2$,
and constant families of summands.
Section~\ref{sec:Jsums} discusses $J$-sums and explains how unconditional quotient
structures yield obstructions to amenability of conditional constructions.
Section~\ref{sec:weak-amenability} develops the weak amenability theory for $E$-sums,
highlighting both the parallels with and the divergences from Johnson amenability.

\medskip
\noindent\textbf{Notation.}
Throughout, $I$ denotes a non-empty set and $\mathcal{F}(I)$ the collection of its finite subsets.
We write $c_{00}(I)$ for the space of finitely supported complex-valued functions on $I$,
$\chi_F$ for the characteristic function of $F \subseteq I$,
and $\delta_i$ for the Dirac mass at $i \in I$.
For a Banach algebra $A$, we denote by $A \hatot A$ the projective tensor product
and by $\pi\colon A \hatot A \to A$ the multiplication map.

%====================== Amenability constants ======================%
\section{Amenability constants and dense unions}\label{sec:amenability-constants}

We follow the standard formulation of Johnson amenability via virtual diagonals; see~\cite{Johnson1972Cohomology,Runde2002Lectures}.

\begin{definition}[Virtual diagonal and amenability constant]\label{def:AM}
Let $A$ be a Banach algebra and let $\pi\colon A\hatot A\to A$ denote the product map $\pi(a\otimes b)=ab$.
We view $A\hatot A$ as an $A$-bimodule via
\[
a\cdot(x\otimes y)=(ax)\otimes y, \qquad (x\otimes y)\cdot a=x\otimes(ya),
\]
and extend these actions canonically to $(A\hatot A)^{**}$.

An element $M\in (A\hatot A)^{**}$ is called a \emph{virtual diagonal} for~$A$ if
\[
a\cdot M = M\cdot a \quad\text{and}\quad a\cdot \pi^{**}(M)=a=\pi^{**}(M)\cdot a
\qquad (a\in A).
\]
The algebra $A$ is \emph{amenable} if it admits a virtual diagonal.

If $A$ is amenable, its \emph{amenability constant} is
\[
\AM(A):=\inf\bigl\{\|M\|\colon M \text{ is a virtual diagonal for }A \bigr\},
\]
and we set $\AM(A)=+\infty$ if $A$ is not amenable.
\end{definition}

\begin{remark}\label{rem:AM-attained}
If $A$ is amenable, then the infimum in Definition~\ref{def:AM} is attained, i.e.\
there exists a virtual diagonal $M$ with $\|M\|=\AM(A)$.
Indeed, choose a net $(M_\alpha)_\alpha$ of virtual diagonals such that
$\|M_\alpha\|\downarrow \AM(A)$. This net is bounded, so by Banach--Alaoglu
it has a weak-$*$ cluster point $M$ in $(A\hat\otimes A)^{**}$.
The defining conditions for a virtual diagonal are weak-$*$ closed, hence $M$
is again a virtual diagonal. Since the norm on a dual space is weak-$*$ lower
semicontinuous, we have $\|M\|\le \liminf_\alpha \|M_\alpha\|=\AM(A)$, and therefore
$\|M\|=\AM(A)$. Compare \cite[Definition~2.2.7]{Runde2020ABA}.
\end{remark}

The following two lemmas are standard; we include short proofs for completeness.

\begin{lemma}\label{lem:quotient}
Let $q\colon A\to B$ be a surjective continuous homomorphism of Banach algebras. Then
\[
\AM(B)\le \|q\|^2\,\AM(A).
\]
\end{lemma}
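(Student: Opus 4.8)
The plan is to transport a virtual diagonal for $A$ to one for $B$ along the canonically induced map $q\otimes q\colon A\hatot A\to B\hatot B$. First I would record the elementary functorial facts: $q\otimes q$ is well defined and bounded with $\|q\otimes q\|\le\|q\|^{2}$ by submultiplicativity of the projective tensor norm; it satisfies $\pi_B\circ(q\otimes q)=q\circ\pi_A$ because $q$ is multiplicative; and it is a homomorphism of $A$-bimodules once $B\hatot B$ is equipped with the $A$-bimodule structure obtained by pulling back its natural $B$-bimodule structure along $q$, i.e.\ $(q\otimes q)(a\cdot\xi)=q(a)\cdot(q\otimes q)(\xi)$ and $(q\otimes q)(\xi\cdot a)=(q\otimes q)(\xi)\cdot q(a)$.

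Next I would invoke the standard fact that the second adjoint of a bounded $A$-bimodule map between Banach $A$-bimodules is again a bounded $A$-bimodule map between the bidual bimodules carrying the canonically extended actions (the extended actions are built from iterated adjoints, with which the second adjoint commutes). Applying this to $q\otimes q$, to $\pi_A$, and to $q$, I conclude that on the biduals one still has $\pi_B^{**}\circ(q\otimes q)^{**}=q^{**}\circ\pi_A^{**}$ together with the intertwining identities $(q\otimes q)^{**}(a\cdot\Xi)=q(a)\cdot(q\otimes q)^{**}(\Xi)$, its right-hand analogue, and $q^{**}(a\cdot\eta)=q(a)\cdot q^{**}(\eta)$, $q^{**}(\eta\cdot a)=q^{**}(\eta)\cdot q(a)$.

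With this in place the argument is immediate. Take a virtual diagonal $M$ for $A$ with $\|M\|=\AM(A)$ (available by Remark~\ref{rem:AM-attained}) and put $N:=(q\otimes q)^{**}(M)$, so that $\|N\|\le\|q\|^{2}\,\AM(A)$. Given $b\in B$, use surjectivity of $q$ to write $b=q(a)$. Then
\[
b\cdot N=(q\otimes q)^{**}(a\cdot M)=(q\otimes q)^{**}(M\cdot a)=N\cdot q(a)=N\cdot b,
\]
and, since $\pi_B^{**}(N)=q^{**}\bigl(\pi_A^{**}(M)\bigr)$,
\[
b\cdot\pi_B^{**}(N)=q^{**}\bigl(a\cdot\pi_A^{**}(M)\bigr)=q^{**}(a)=q(a)=b,
\]
and symmetrically $\pi_B^{**}(N)\cdot b=b$. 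Hence $N$ is a virtual diagonal for $B$, and $\AM(B)\le\|N\|\le\|q\|^{2}\,\AM(A)$; taking the infimum over virtual diagonals $M$ is unnecessary since we already chose a norm-minimal one.

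The only real difficulty here is bookkeeping rather than analysis: one must be careful that the $A$-bimodule structure placed on $B\hatot B$ (and on $B$, $B^{**}$, $(B\hatot B)^{**}$) via $q$ is precisely the one with respect to which $q\otimes q$, $\pi$, $q$ and their second adjoints are module maps, and that the two virtual-diagonal identities for $M$ are applied in the correct slots. No estimate beyond $\|q\otimes q\|\le\|q\|^{2}$ enters.
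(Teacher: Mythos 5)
Your proof is correct and follows essentially the same route as the paper: push a virtual diagonal forward along $(q\hatot q)^{**}$, use $\|q\hatot q\|\le\|q\|^{2}$, the intertwining relation $\pi_B\circ(q\hatot q)=q\circ\pi_A$, and surjectivity of $q$ to verify the diagonal identities for $B$. The extra bookkeeping you supply about pulled-back bimodule structures and second adjoints is exactly what the paper compresses into ``functoriality of module actions.''
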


\begin{proof}
The map $q\hatot q\colon A\hatot A\to B\hatot B$ is a surjective continuous homomorphism with
$\|q\hatot q\|\le \|q\|^2$, hence its bidual $(q\hatot q)^{**}$ has the same norm bound.
If $M$ is a virtual diagonal for~$A$, then $N:=(q\hatot q)^{**}(M)$ is a virtual diagonal for~$B$:
the defining identities follow immediately from functoriality of module actions and the relation
$\pi_B\circ (q\hatot q)=q\circ \pi_A$.
Therefore $\AM(B)\le \|N\|\le \|q\|^2\|M\|$, and taking the infimum over $M$ gives the claim.
\end{proof}

\begin{lemma}[Directed dense unions]\label{lem:dense-union}
Let $A$ be a Banach algebra and let $(B_\lambda)_{\lambda\in\Lambda}$ be an increasing net of closed subalgebras,
indexed by a directed set $(\Lambda,\le)$, such that $\bigcup_{\lambda\in\Lambda}B_\lambda$ is dense in~$A$.
Then
\[
\AM(A)\le \sup_{\lambda\in\Lambda}\AM(B_\lambda).
\]
\end{lemma}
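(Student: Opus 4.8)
The plan is to produce virtual diagonals for $A$ as weak-$*$ limits of (images of) virtual diagonals for the subalgebras $B_\lambda$. Fix $C > \sup_\lambda \AM(B_\lambda)$ (if the supremum is $+\infty$ there is nothing to prove, so assume it is finite). For each $\lambda$ pick, using Remark~\ref{rem:AM-attained}, a virtual diagonal $M_\lambda \in (B_\lambda \hatot B_\lambda)^{**}$ with $\|M_\lambda\| \le C$. Let $\iota_\lambda \colon B_\lambda \hookrightarrow A$ be the inclusion; then $\iota_\lambda \hatot \iota_\lambda \colon B_\lambda \hatot B_\lambda \to A \hatot A$ is a contractive algebra homomorphism, so $\widetilde M_\lambda := (\iota_\lambda \hatot \iota_\lambda)^{**}(M_\lambda) \in (A\hatot A)^{**}$ has norm $\le C$. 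By Banach--Alaoglu the bounded net $(\widetilde M_\lambda)_\lambda$ has a weak-$*$ cluster point $M \in (A\hatot A)^{**}$ with $\|M\| \le C$.

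Next I would check that $M$ is a virtual diagonal for $A$. Since $\pi_A \circ (\iota_\lambda \hatot \iota_\lambda) = \iota_\lambda \circ \pi_{B_\lambda}$ and $\pi_{B_\lambda}^{**}(M_\lambda)$ acts as a two-sided identity on $B_\lambda$, one gets that for every $\lambda$ and every $b \in B_\lambda$, $b \cdot \pi_A^{**}(\widetilde M_\mu) = b = \pi_A^{**}(\widetilde M_\mu)\cdot b$ whenever $\mu \ge \lambda$ (by the directedness, $b \in B_\mu$), and likewise $b \cdot \widetilde M_\mu = \widetilde M_\mu \cdot b$ for $\mu \ge \lambda$. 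The maps $N \mapsto a\cdot N - N\cdot a$ and $N \mapsto a\cdot \pi_A^{**}(N) - \pi_A^{**}(N)\cdot a - \text{(const)}$ are weak-$*$ continuous on bounded sets (the module actions and $\pi_A^{**}$ are weak-$*$--weak-$*$ continuous, being biadjoints), so passing to the cluster point shows that for each fixed $a$ in the dense subalgebra $\bigcup_\lambda B_\lambda$ the virtual-diagonal identities hold for $M$. Finally, a standard density/continuity argument (both sides of each identity are $\|a\|$-Lipschitz in $a$, and $M$ is fixed) extends the identities from $\bigcup_\lambda B_\lambda$ to all of $A$. Hence $M$ is a virtual diagonal with $\|M\| \le C$, and letting $C \downarrow \sup_\lambda \AM(B_\lambda)$ gives the claim.

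The main obstacle is the interplay between the \emph{two} limiting processes: the weak-$*$ cluster point is taken along the \emph{same} directed set $\Lambda$ that indexes the subalgebras, so when I want to evaluate the identity for a fixed $a = b \in B_\lambda$, I can only use indices $\mu \ge \lambda$ — I must argue that a cluster point of the full net is still a cluster point of each such tail subnet $(\widetilde M_\mu)_{\mu \ge \lambda}$, which is true precisely because $\{\mu : \mu \ge \lambda\}$ is cofinal. The second delicate point is justifying weak-$*$ continuity of $\pi_A^{**}$ and the bimodule actions on $(A\hatot A)^{**}$ with values in $A^{**}$; this is routine (biadjoints of bounded operators are weak-$*$ continuous) but should be stated carefully, since the identity $a\cdot\pi_A^{**}(M) = a$ lives in $A$, not $A^{**}$, and one uses that $a\cdot(\,\cdot\,)$ maps $A^{**}$ into $A$ weak-$*$ continuously when $a \in A$.
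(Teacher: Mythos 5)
Your proposal is correct and follows essentially the same route as the paper's proof: push the attained virtual diagonals $M_\lambda$ forward via $(\iota_\lambda\hatot\iota_\lambda)^{**}$, take a weak-$*$ cluster point of the resulting bounded net, verify the diagonal identities on each $B_{\lambda_0}$ using the tail $\{\mu:\mu\ge\lambda_0\}$, and conclude by density. Your explicit attention to the cofinality of the tails and to the weak-$*$ continuity of the biadjoint maps only makes precise what the paper leaves implicit.
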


\begin{proof}
This is the virtual-diagonal formulation of \cite[Proposition~2.3.15]{Runde2020ABA}.

If the right-hand side is $+\infty$ there is nothing to prove. Otherwise, let $C<\infty$ dominate
$\AM(B_\lambda)$ for all~$\lambda$.
Choose, for each~$\lambda$, a virtual diagonal $M_\lambda$ for~$B_\lambda$ with $\|M_\lambda\|=\AM(B_\lambda)$.
Let $i_\lambda\colon B_\lambda\hookrightarrow A$ denote the inclusion homomorphism, and set
\[
\widetilde M_\lambda := (i_\lambda\hatot i_\lambda)^{**}(M_\lambda)\in (A\hatot A)^{**}.
\]
Then $\|\widetilde M_\lambda\|\le \|M_\lambda\|\le C$, so $(\widetilde M_\lambda)_{\lambda\in\Lambda}$
is a bounded net in $(A\hatot A)^{**}$.
By the Banach--Alaoglu theorem, it has a weak-$*$ cluster point~$M$ with $\|M\|\le C$.

Fix $b\in\bigcup_{\lambda\in\Lambda}B_\lambda$ and choose $\lambda_0$ with $b\in B_{\lambda_0}$.
Then $b\in B_\lambda$ for all $\lambda\ge \lambda_0$, so $b\cdot M_\lambda=M_\lambda\cdot b$ and
$b\cdot\pi^{**}(M_\lambda)=b=\pi^{**}(M_\lambda)\cdot b$ for all $\lambda\ge\lambda_0$.
Passing to the weak-$*$ limit yields the same identities with $M$ in place of $M_\lambda$.

A standard density argument (approximating an arbitrary $a\in A$ by elements of $\bigcup_\lambda B_\lambda$)
shows that $M$ is a virtual diagonal for all of $A$. Hence $\AM(A)\le \|M\|\le C$.
\end{proof}

We will also need the following well-known finite-sum formula.

\begin{lemma}\label{lem:finite-infty}
Let $A_1,\dots,A_n$ be Banach algebras and let $A:=A_1\oplus_\infty\cdots\oplus_\infty A_n$
with coordinatewise multiplication and norm $\|(a_1,\dots,a_n)\|=\max_j\|a_j\|$.
Then
\[
\AM(A)=\max_{1\le j\le n}\AM(A_j).
\]
\end{lemma}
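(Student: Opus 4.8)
The plan is to prove the two inequalities $\max_j\AM(A_j)\le\AM(A)$ and $\AM(A)\le\max_j\AM(A_j)$ separately. For the first, note that for each $j$ the coordinate projection $p_j\colon A\to A_j$ is a surjective continuous homomorphism with $\|p_j\|=1$, since the max-norm on $A$ restricts isometrically to $A_j$ on the $j$-th coordinate subspace; Lemma~\ref{lem:quotient} then gives $\AM(A_j)\le\|p_j\|^2\AM(A)=\AM(A)$, so $\max_j\AM(A_j)\le\AM(A)$. This already settles the identity (both sides $+\infty$) unless every $A_j$ is amenable, so from now on assume that each $A_j$ is amenable.

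For the reverse inequality I construct a virtual diagonal of the right size. Using Remark~\ref{rem:AM-attained}, fix for each $j$ a virtual diagonal $M_j\in(A_j\hatot A_j)^{**}$ with $\|M_j\|=\AM(A_j)$, let $\iota_j\colon A_j\hookrightarrow A$ be the $j$-th coordinate embedding (a norm-one homomorphism onto the ideal $A_j\subseteq A$, with $A_jA_k=\{0\}$ for $j\ne k$), and set $\widetilde M_j:=(\iota_j\hatot\iota_j)^{**}(M_j)\in(A\hatot A)^{**}$ and $M:=\sum_{j=1}^n\widetilde M_j$. That $M$ is a virtual diagonal for $A$ is a routine functoriality check: since $\iota_j$ is a homomorphism, $(\iota_j\hatot\iota_j)$ intertwines the $A_j$-bimodule actions and $\pi_A\circ(\iota_j\hatot\iota_j)=\iota_j\circ\pi_{A_j}$, so passing to biduals and using weak-$*$ continuity of the module actions (together with Goldstine's theorem to reduce to elementary tensors, on which the vanishing relations $A_jA_k=\{0\}$ are visible) one obtains $a\cdot\widetilde M_j=\widetilde M_j\cdot a$ for all $a\in A$, while $\pi_A^{**}(\widetilde M_j)=\iota_j^{**}\bigl(\pi_{A_j}^{**}(M_j)\bigr)$ acts as a two-sided identity on the $j$-th coordinate and annihilates the others; summing over $j$ gives the two defining identities for~$M$.

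It remains to bound $\|M\|$, which is the one genuinely non-formal step. For $F\in(A\hatot A)^*$ write $F_j:=(\iota_j\hatot\iota_j)^*(F)\in(A_j\hatot A_j)^*$, so that $\widetilde M_j(F)=M_j(F_j)$ and therefore
\[
|M(F)|\le\sum_{j=1}^n\|M_j\|\,\|F_j\|\le\Bigl(\sum_{j=1}^n\|F_j\|\Bigr)\max_j\AM(A_j).
\]
Hence it suffices to establish the contractivity estimate $\sum_{j=1}^n\|F_j\|\le\|F\|$. Fix $x_j,y_j$ in the closed unit balls of the $A_j$ and put $z:=\sum_{j=1}^n\iota_j(x_j)\otimes\iota_j(y_j)\in A\hatot A$. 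Averaging the elementary tensors $(\epsilon_1x_1,\dots,\epsilon_nx_n)\otimes(\epsilon_1y_1,\dots,\epsilon_ny_n)$ over all sign vectors $\epsilon\in\{-1,1\}^n$ annihilates the off-diagonal cross terms and exhibits $z$ as a convex combination of elementary tensors, each of projective norm $\bigl(\max_j\|x_j\|\bigr)\bigl(\max_j\|y_j\|\bigr)\le1$; thus $\|z\|_{A\hatot A}\le1$. Since $F(z)=\sum_jF_j(x_j\otimes y_j)$, adjusting the phases of the $x_j$ so that each term is nonnegative and close to $\|F_j\|$ and then taking the supremum over all such $x_j,y_j$ yields $\sum_j\|F_j\|\le\sup_z|F(z)|\le\|F\|$. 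This gives $\|M\|\le\max_j\AM(A_j)$ and completes the proof.

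I expect this last estimate — the statement that the embeddings $(\iota_j\hatot\iota_j)$ place the corners $A_j\hatot A_j$ in mutually $\ell_\infty$-orthogonal position inside $A\hatot A$, so that the a priori bound $\|M\|\le\sum_j\|M_j\|$ improves to a maximum — to be the crux; everything else reduces to functoriality of virtual diagonals under the homomorphisms $\iota_j$ and $p_j$. (One could equally prove the two-summand case $\AM(B\oplus_\infty C)=\max\{\AM(B),\AM(C)\}$ directly and induct on~$n$.)
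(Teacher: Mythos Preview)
Your argument is correct. The lower bound via the contractive coordinate projections is standard, and for the upper bound your construction $M=\sum_j(\iota_j\hatot\iota_j)^{**}(M_j)$ together with the Rademacher averaging trick for the norm estimate is a clean and complete proof; the identity $\|F_j\|=\sup_{\|x_j\|,\|y_j\|\le1}|F_j(x_j\otimes y_j)|$ (dual of the projective tensor product as bounded bilinear forms) is exactly what makes the final supremum argument go through.

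The paper, however, does not prove this lemma at all: it simply cites \cite[Lemma~2.3.18]{Runde2020ABA} and remarks that Runde's proof works with \emph{approximate} diagonals rather than virtual diagonals. So your route is genuinely different from the one the paper defers to. Runde's argument takes bounded approximate diagonals $(d_j^{(\alpha)})_\alpha$ in each $A_j\hatot A_j$, forms the coordinatewise sum inside $A\hatot A$, and uses the explicit tensor-product splitting $A\hatot A\cong\bigoplus_{j,k}A_j\hatot A_k$ to read off the $\ell_\infty$-bound $\max_j\|d_j^{(\alpha)}\|$ directly. Your approach stays entirely in the virtual-diagonal framework the paper adopts elsewhere, and replaces the structural splitting by the sign-averaging identity $z=2^{-n}\sum_{\epsilon}(\epsilon\cdot x)\otimes(\epsilon\cdot y)$, which gives the same contractivity estimate $\sum_j\|F_j\|\le\|F\|$ without ever describing $A\hatot A$ as a direct sum. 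Both arguments are short; yours has the minor advantage of being self-contained within the paper's chosen formalism, while Runde's makes the underlying block-diagonal geometry of $A\hatot A$ more visible.
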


\begin{proof}
This is \cite[Lemma~2.3.18]{Runde2020ABA}.  (Runde proves the result using approximate diagonals.)
\end{proof}

%====================== E-sums ======================%
\section{Unconditional $E$-sums of Banach algebras}\label{sec:E-sums}

We now specify the class of coefficient spaces~$E$ used to define unconditional sums.

\begin{definition}\label{def:sequence-lattice}
Let $I$ be a set. A \emph{Banach sequence lattice} on $I$ is a Banach space $E$
of complex-valued functions on $I$ such that:
\begin{enumerate}[label={\rm(\alph*)}]
\item $c_{00}(I)\subseteq E\subseteq \ell_\infty(I)$ and $c_{00}(I)$ is dense in $E$;
\item $E$ is \emph{solid}: if $y\in E$ and $x\in \ell_\infty(I)$ satisfy
$|x_i|\le |y_i|$ for all $i\in I$, then $x\in E$ and $\|x\|_E\le \|y\|_E$;
\item the inclusion $E\hookrightarrow \ell_\infty(I)$ is contractive, i.e.\ $\|x\|_\infty\le \|x\|_E$
for all $x\in E$.
\end{enumerate}
For each finite set $F\subseteq I$ let $\chi_F:=\sum_{i\in F}\delta_i\in c_{00}(I)\subseteq E$ and define
\[
C_E:=\sup\{\|\chi_F\|_E : F\subseteq I \text{ finite}\}\in [1,\infty].
\]
\end{definition}

\begin{definition}[Restriction of a sequence lattice]\label{def:restriction}
If $J\subseteq I$, we write $E|J$ for the restriction of $E$ to $J$:
\[
E|J := \{x|_J:\ x\in E\},\qquad \|u\|_{E|J}:=\inf\{\|x\|_E:\ x\in E,\ x|_J=u\}.
\]
\end{definition}

\begin{lemma}\label{lem:restriction-properties}
Let $E$ be a Banach sequence lattice on $I$ and let $J\subseteq I$.  Then $E|J$ is a Banach
sequence lattice on $J$.  In particular, $c_{00}(J)$ is dense in $E|J$, the inclusion
$E|J\hookrightarrow \ell_\infty(J)$ is contractive, and
\[
C_{E|J}=\sup\{\|\chi_F\|_E:\ F\subseteq J\text{ finite}\}.
\]
Here, on the right-hand side, $\chi_F$ is regarded as a function on $I$ by extending it by zero
outside $F$.
\end{lemma}

\begin{proof}
Let $R_J\colon E\to \ell_\infty(J)$ be the restriction map.  Since
$\|R_Jx\|_\infty\le \|x\|_\infty\le \|x\|_E$, the kernel of $R_J$ is closed and $E|J$, with the
quotient norm above, is canonically isometric to $E/\ker R_J$.  Hence $E|J$ is a Banach space.
The same estimate also gives the contractive inclusion $E|J\hookrightarrow \ell_\infty(J)$:
if $u\in E|J$ and $x|_J=u$, then $\|u\|_\infty\le \|x\|_E$, and taking the infimum over all
such extensions $x$ gives $\|u\|_\infty\le \|u\|_{E|J}$.

We next check solidity.  Let $v\in E|J$ and let $u\in\ell_\infty(J)$ satisfy
$|u_j|\le |v_j|$ for $j\in J$.  Given an extension $y\in E$ of $v$, define $x\in\ell_\infty(I)$
by $x_j=u_j$ for $j\in J$ and $x_i=0$ for $i\notin J$.  Then $|x_i|\le |y_i|$ for every
$i\in I$, so $x\in E$ and $\|x\|_E\le \|y\|_E$ by solidity of $E$.  Thus $u=x|_J\in E|J$ and,
after taking the infimum over all such $y$, $\|u\|_{E|J}\le \|v\|_{E|J}$.

Finally, $c_{00}(J)$ is dense in $E|J$.  Indeed, if $u\in E|J$, choose $x\in E$ with
$x|_J=u$.  Since $c_{00}(I)$ is dense in $E$, for every $\varepsilon>0$ there is
$w\in c_{00}(I)$ with $\|x-w\|_E<\varepsilon$.  Then $w|_J\in c_{00}(J)$ and
\[
\|u-w|_J\|_{E|J}\le \|x-w\|_E<\varepsilon.
\]
For finite $F\subseteq J$, the zero extension of $\chi_F$ belongs to $E$ and gives
$\|\chi_F\|_{E|J}\le \|\chi_F\|_E$.  Conversely, if $x\in E$ and $x|_J=\chi_F$, then
$|\chi_F|\le |x|$ on $I$, again viewing $\chi_F$ as zero outside $F$; hence solidity gives
$\|\chi_F\|_E\le \|x\|_E$.  Taking the infimum over all such $x$ proves
$\|\chi_F\|_{E|J}=\|\chi_F\|_E$, and the formula for $C_{E|J}$ follows.
\end{proof}

\begin{remark}\label{rem:norm-normalisation}
Assumption~\emph{(c)} is a convenient normalisation ensuring that the coordinate evaluations
$x\mapsto x_i$ have norm~$\le 1$ on $E$, and hence coordinate projections in $E$-sums are contractive.
Many concrete examples satisfy \emph{(c)} after a harmless global renorming.
\end{remark}

\begin{remark}\label{rem:CE}
The hypothesis $C_E<\infty$ is equivalent to uniform control of the $E$-norm by the supremum norm on finitely
supported families:
\[
\|x\|_E \le C_E\|x\|_\infty \qquad (x\in \czerozero(I)).
\]
Indeed, if $x$ has support contained in $F$, then $|x|\le \|x\|_\infty \chi_F$, so solidity yields
$\|x\|_E\le \|x\|_\infty \|\chi_F\|_E\le C_E\|x\|_\infty$.
\end{remark}

\begin{lemma}\label{lem:CE-equivalence}
Let $E$ be a Banach sequence lattice on $I$ with $C_E<\infty$.
Then $E=c_0(I)$ as sets, and for all $x\in E$ one has
\[
\|x\|_\infty\le \|x\|_E\le C_E\|x\|_\infty.
\]
\end{lemma}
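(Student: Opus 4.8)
The plan is to derive both the set-theoretic identity $E = c_0(I)$ and the two-sided norm estimate from the uniform bound $C_E < \infty$ together with the three defining axioms (a)--(c) of a Banach sequence lattice. The lower bound $\|x\|_\infty \le \|x\|_E$ is nothing but axiom (c), so the entire content is the upper estimate and the claim that every element of $E$ is a $c_0$-function.

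First I would prove the upper bound $\|x\|_E \le C_E\|x\|_\infty$ on all of $E$, not merely on $c_{00}(I)$. For $x \in c_{00}(I)$ this is exactly Remark~\ref{rem:CE}: if $\supp x \subseteq F$ then $|x| \le \|x\|_\infty \chi_F$ pointwise, so solidity (axiom (b)) gives $\|x\|_E \le \|x\|_\infty\|\chi_F\|_E \le C_E\|x\|_\infty$. To pass to a general $x \in E$, I would use density of $c_{00}(I)$ in $E$ (axiom (a)): pick $x_n \in c_{00}(I)$ with $\|x_n - x\|_E \to 0$. Then $\|x_n - x\|_\infty \to 0$ as well by axiom (c), so $\|x_n\|_\infty \to \|x\|_\infty$; combining with $\|x_n\|_E \le C_E\|x_n\|_\infty$ and the continuity of the norm on $E$ yields $\|x\|_E = \lim_n \|x_n\|_E \le C_E \lim_n \|x_n\|_\infty = C_E\|x\|_\infty$.

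Next I would establish the set equality. The inclusion $c_0(I) \subseteq E$ follows by completeness: given $x \in c_0(I)$, truncating to finite supports produces a sequence $x_n \in c_{00}(I)$ with $\|x_n - x\|_\infty \to 0$, and since $x_n - x_m$ is finitely supported the inequality just proved gives $\|x_n - x_m\|_E \le C_E\|x_n - x_m\|_\infty \to 0$; thus $(x_n)$ is Cauchy in the Banach space $E$, its limit lies in $E$, and by axiom (c) that limit agrees with $x$. Conversely $E \subseteq c_0(I)$: again use density of $c_{00}(I)$ in $E$ to approximate $x \in E$ by finitely supported $x_n$ in $E$-norm, hence (by axiom (c)) in sup-norm, so $x$ is a uniform limit of finitely supported functions and therefore vanishes at infinity.

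The argument is entirely routine, so there is no genuine obstacle; the only point requiring a moment's care is the direction $c_0(I) \subseteq E$, where one must invoke completeness of $E$ rather than merely solidity, since a priori elements of $c_0(I)$ need not be literally built from a norm-convergent $c_{00}$-series inside $E$ without knowing $C_E < \infty$. Once the finitely-supported estimate and completeness are in hand, this is immediate.
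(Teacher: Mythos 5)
Your proof is correct and follows essentially the same route as the paper: axiom (c) for the lower bound, the solidity estimate on $c_{00}(I)$ plus density for the upper bound, and a Cauchy/completeness argument for $c_0(I)\subseteq E$. The only cosmetic difference is in extending the upper bound from $c_{00}(I)$ to $E$: the paper truncates $x$ itself and controls the tail $\|x-x\chi_F\|_E$ via solidity, whereas you pass to the limit in $\|x_n\|_E\le C_E\|x_n\|_\infty$ along an approximating sequence using continuity of both norms --- both are valid.
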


\begin{proof}
The inequality $\|x\|_\infty\le \|x\|_E$ is Definition~\ref{def:sequence-lattice}(c).
For the upper bound, fix $x\in E$ and $\varepsilon>0$.
By Definition~\ref{def:sequence-lattice}(a), $c_{00}(I)$ is dense in~$E$, so there exists
$v\in c_{00}(I)$ with $\|x-v\|_E<\varepsilon$.
Let $F:=\supp(v)$. Since $v$ vanishes on $I\setminus F$, we have
$x\chi_{I\setminus F}=(x-v)\chi_{I\setminus F}$, and hence by solidity
\[
\|x-x\chi_F\|_E = \|x\chi_{I\setminus F}\|_E \le \|x-v\|_E < \varepsilon.
\]
Then, using Remark~\ref{rem:CE} for $x\chi_F\in c_{00}(I)$,
\[
\|x\|_E \le \|x-x\chi_F\|_E+\|x\chi_F\|_E
\le \varepsilon + C_E\|x\chi_F\|_\infty
\le \varepsilon + C_E\|x\|_\infty.
\]
Letting $\varepsilon\downarrow 0$ gives $\|x\|_E\le C_E\|x\|_\infty$.

Finally, $E\subseteq c_0(I)$ because if $x\in E$ and $\varepsilon>0$, pick $v\in c_{00}(I)$ with
$\|x-v\|_E<\varepsilon$; then for $i\notin\supp(v)$ we have
$|x_i|=|x_i-v_i|\le \|x-v\|_\infty\le \|x-v\|_E<\varepsilon$.
Conversely, if $y\in c_0(I)$ then $(y\chi_F)_F\subset c_{00}(I)\subset E$ is Cauchy in $E$ because
$\|y\chi_F-y\chi_G\|_E\le C_E\|y\chi_F-y\chi_G\|_\infty$, and $y\chi_F\to y$ in $\|\cdot\|_\infty$.
Thus $y\in E$.
\end{proof}

\begin{lemma}\label{lem:pointwise}
Let $E$ be a Banach sequence lattice on~$I$.
Then $E$ is a Banach algebra for pointwise multiplication, and
\[
\|xy\|_E\le \|x\|_E\,\|y\|_E \qquad (x,y\in E).
\]
\end{lemma}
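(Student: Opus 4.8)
The plan is to reduce the statement to the solidity axiom~(b) together with the contractive inclusion~(c). Associativity, commutativity and bilinearity of pointwise multiplication are inherited from the pointwise operations on $\ell_\infty(I)$ and need no separate argument, and $E$ is complete by hypothesis; hence the only substantive points are that $E$ is closed under pointwise products and that the $E$-norm is submultiplicative.

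First I would fix $x,y\in E$ and note that $y\in\ell_\infty(I)$ with $\|y\|_\infty\le\|y\|_E$ by Definition~\ref{def:sequence-lattice}(c). Pointwise this gives $|(xy)_i|=|x_i|\,|y_i|\le\|y\|_\infty\,|x_i|$ for every $i\in I$; in other words $xy$ is dominated in modulus by $\|y\|_\infty\,x$, which lies in $E$ as a scalar multiple of $x$ and has $E$-norm $\|y\|_\infty\,\|x\|_E$. Applying solidity~(b) with $\|y\|_\infty\,x$ in the role of the dominating element and $xy\in\ell_\infty(I)$ in the role of the dominated one yields $xy\in E$ together with
\[
\|xy\|_E\;\le\;\bigl\|\,\|y\|_\infty\,x\,\bigr\|_E\;=\;\|y\|_\infty\,\|x\|_E\;\le\;\|y\|_E\,\|x\|_E,
\]
which is the asserted inequality; in particular multiplication is jointly continuous, so $E$ is a Banach algebra for pointwise multiplication.

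I do not expect any real obstacle here: the whole assertion is essentially a one-line consequence of solidity. The only points requiring a little care are to invoke~(b) in the correct direction---the product $xy$ is the element being \emph{dominated}, not the dominating one---and to keep track of the scalar factor $\|y\|_\infty$, which is then absorbed into $\|y\|_E$ via~(c).
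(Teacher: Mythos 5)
Your argument is correct and coincides with the paper's proof: both derive the pointwise domination $|xy|\le\|y\|_\infty|x|$, invoke solidity to conclude $xy\in E$ with $\|xy\|_E\le\|y\|_\infty\|x\|_E$, and then absorb $\|y\|_\infty$ into $\|y\|_E$ via axiom~(c). The only cosmetic difference is that the paper first records the intermediate fact that $E$ is an ideal in $\ell_\infty(I)$ before specialising to $y\in E$.
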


\begin{proof}
Let $x\in E$ and $y\in \ell_\infty(I)$. Then $|xy|\le \|y\|_\infty |x|$ pointwise, so by solidity
$xy\in E$ and
\[
\|xy\|_E \le \|\|y\|_\infty x\|_E = \|y\|_\infty \|x\|_E.
\]
Thus $E$ is an ideal in $\ell_\infty(I)$, in the sense that $xy\in E$ whenever $x\in E$ and $y\in\ell_\infty(I)$. If now $x,y\in E$, then by Definition~\ref{def:sequence-lattice}(c)
we have $\|y\|_\infty\le \|y\|_E$, and hence
\[
\|xy\|_E\le \|x\|_E\|y\|_\infty \le \|x\|_E\|y\|_E,
\]
which is the desired submultiplicativity.
\end{proof}

We can now define unconditional sums of Banach spaces and, in particular, Banach algebras.

\begin{definition}[$E$-sums]\label{def:Esum}
Let $I$ be a non-empty set, let $E$ be a Banach sequence lattice on~$I$, and let $(X_i)_{i\in I}$ be a family of Banach spaces.
The \emph{$E$-sum} $X:=\big(\bigoplus_{i\in I}X_i\big)_{\!E}$ consists of all families $x=(x_i)_{i\in I}$ with $x_i\in X_i$ such that
\[
\bigl(\|x_i\|\bigr)_{i\in I}\in E,
\]
equipped with the norm
\[
\|x\|:=\bigl\|\bigl(\|x_i\|\bigr)_{i\in I}\bigr\|_E.
\]
If each $X_i$ is a Banach algebra, we endow $X$ with coordinatewise multiplication:
$(x_iy_i)_{i\in I}$.
\end{definition}

\begin{proposition}\label{prop:Esum-algebra}
Let $E$ be a Banach sequence lattice on~$I$ and let $(A_i)_{i\in I}$ be a family of Banach algebras.
Then $A:=\big(\bigoplus_{i\in I}A_i\big)_{\!E}$ is a Banach algebra with coordinatewise multiplication, and
\[
\|ab\|\le \|a\|\,\|b\| \qquad (a,b\in A).
\]
Moreover, for each $i\in I$, the coordinate projection $\pi_i\colon A\to A_i$ is a contractive homomorphism,
and the coordinate embedding $\iota_i\colon A_i\to A$ is a bounded homomorphism with
\[
\|\iota_i\|=\|\delta_i\|_E.
\]
\end{proposition}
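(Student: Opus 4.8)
The plan is to verify the Banach algebra axioms for $A$ coordinatewise, reducing everything to the two facts already established: that $E$ is a Banach sequence lattice (hence solid, with contractive inclusion into $\ell_\infty(I)$) and that $E$ is itself a Banach algebra under pointwise multiplication with submultiplicative norm (Lemma~\ref{lem:pointwise}). First I would observe that $A$ is a closed subspace of the usual $E$-sum of the $A_i$ viewed merely as Banach spaces, so it is complete; this is standard and I would state it without belabouring it. The substance is the submultiplicativity of the norm and the well-definedness of coordinatewise multiplication, which I address together.

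Given $a=(a_i)_{i\in I}$ and $b=(b_i)_{i\in I}$ in $A$, the key pointwise inequality is $\|a_ib_i\|\le \|a_i\|\,\|b_i\|$ for every $i$, valid because each $A_i$ is a Banach algebra. Writing $u:=(\|a_i\|)_{i\in I}$ and $v:=(\|b_i\|)_{i\in I}$, both of which lie in $E$ by definition of $A$, this says that the scalar family $(\|a_ib_i\|)_{i\in I}$ is dominated pointwise in modulus by $uv\in E$ (using Lemma~\ref{lem:pointwise} to see $uv\in E$). Solidity of $E$ then gives simultaneously that $(\|a_ib_i\|)_{i\in I}\in E$—so $ab\in A$ and multiplication is well-defined—and that
\[
\|ab\| = \bigl\|(\|a_ib_i\|)_{i\in I}\bigr\|_E \le \|uv\|_E \le \|u\|_E\,\|v\|_E = \|a\|\,\|b\|.
\]
Associativity, bilinearity, and distributivity are inherited coordinatewise from the $A_i$, so no separate argument is needed. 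I would record these in one sentence.

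For the coordinate maps: $\pi_i$ is clearly a homomorphism since multiplication is coordinatewise, and $\|\pi_i(a)\| = \|a_i\| \le \|(\|a_j\|)_{j\in I}\|_\infty \le \|(\|a_j\|)_{j\in I}\|_E = \|a\|$, where the last inequality is Definition~\ref{def:sequence-lattice}(c); hence $\pi_i$ is contractive. For $\iota_i\colon A_i\to A$ sending $x$ to the family supported at $i$ with value $x$, it is a homomorphism because multiplication is coordinatewise and the supports agree; its norm is $\|\iota_i(x)\| = \bigl\|\,\|x\|\,\delta_i\bigr\|_E = \|x\|\,\|\delta_i\|_E$, so $\|\iota_i\| = \|\delta_i\|_E$ exactly. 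I do not anticipate a genuine obstacle here—the only point requiring the slightest care is remembering to invoke solidity for both conclusions (membership in $E$ and the norm bound) rather than proving them separately, and to cite Lemma~\ref{lem:pointwise} for the membership $uv\in E$; everything else is bookkeeping.
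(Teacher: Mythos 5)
Your proof is correct and follows essentially the same route as the paper: the pointwise inequality $\|a_ib_i\|\le\|a_i\|\,\|b_i\|$ combined with Lemma~\ref{lem:pointwise} and solidity for submultiplicativity, and the identical computations for $\pi_i$ and $\iota_i$. No gaps.
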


\begin{proof}
Let $a=(a_i)$ and $b=(b_i)$ lie in~$A$.
Then $u:=(\|a_i\|)$ and $v:=(\|b_i\|)$ belong to~$E$.
For each $i$ we have $\|a_ib_i\|\le \|a_i\|\,\|b_i\|$, hence
\[
(\|a_ib_i\|)_{i\in I}\le u\,v \quad\text{pointwise}.
\]
By Lemma~\ref{lem:pointwise}, $uv\in E$ and $\|uv\|_E\le \|u\|_E\|v\|_E$.
Using solidity once more yields $(\|a_ib_i\|)\in E$ and
\[
\|ab\|=\|(\|a_ib_i\|)\|_E\le \|uv\|_E\le \|u\|_E\|v\|_E=\|a\|\,\|b\|.
\]
For $\pi_i$, Definition~\ref{def:sequence-lattice}\emph{(c)} yields
\[
\|\pi_i(a)\|=\|a_i\|
\le \bigl\|(\|a_j\|)_{j\in I}\bigr\|_{\infty}
\le \bigl\|(\|a_j\|)_{j\in I}\bigr\|_{E}
=\|a\|
\qquad (a=(a_j)\in A),
\]
so $\|\pi_i\|\le 1$.
For $\iota_i$, if $b\in A_i$ then $\iota_i(b)$ is supported at $i$ and
\[
\|\iota_i(b)\|
= \bigl\|(\|b\|\delta_i)\bigr\|_{E}
= \|b\|\,\|\delta_i\|_{E},
\]
hence $\|\iota_i\|=\|\delta_i\|_{E}$.
\end{proof}

\begin{lemma}[Finite-support subalgebras are dense]\label{lem:dense-finite-support}
Let $E$ be a Banach sequence lattice on~$I$ and let $(X_i)_{i\in I}$ be Banach spaces.
For $F\in\cF(I)$ let
\[
X_F:=\{x\in (\bigoplus_{i\in I}X_i)_E : x_i=0 \text{ for } i\notin F\}.
\]
Then $\bigcup_{F\in\cF(I)}X_F$ is dense in $(\bigoplus_{i\in I}X_i)_E$.
\end{lemma}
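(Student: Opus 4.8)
The plan is to reduce the claim to the already-assumed density of $c_{00}(I)$ in $E$ (Definition~\ref{def:sequence-lattice}(a)) by passing from a vector-valued family to the scalar sequence of its coordinate norms. Fix $x=(x_i)_{i\in I}$ in the $E$-sum and put $u:=(\|x_i\|)_{i\in I}\in E$. For $F\in\cF(I)$, write $x\chi_F$ for the family agreeing with $x$ on $F$ and vanishing off $F$; this element lies in $X_F$, and its sequence of coordinate norms is exactly $u\chi_F$.

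The key observation is that $\|x-x\chi_F\|=\|u\chi_{I\setminus F}\|_E$, since the coordinate norms of $x-x\chi_F$ form the sequence $u\chi_{I\setminus F}$. So it suffices to arrange $\|u\chi_{I\setminus F}\|_E<\varepsilon$ for a suitable finite $F$. Given $\varepsilon>0$, use density of $c_{00}(I)$ in $E$ to choose $v\in c_{00}(I)$ with $\|u-v\|_E<\varepsilon$, and set $F:=\supp(v)$. Because $v$ vanishes on $I\setminus F$, we have $u\chi_{I\setminus F}=(u-v)\chi_{I\setminus F}$, hence $|u\chi_{I\setminus F}|\le |u-v|$ pointwise; solidity (Definition~\ref{def:sequence-lattice}(b)) then gives $\|u\chi_{I\setminus F}\|_E\le\|u-v\|_E<\varepsilon$. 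Therefore $\|x-x\chi_F\|<\varepsilon$ with $x\chi_F\in X_F$, which is the desired density.

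There is essentially no obstacle here: the only step meriting attention is the passage from the $X_i$-valued family to its scalar envelope $u$, which is handled uniformly by solidity, exactly as in the proof of Lemma~\ref{lem:CE-equivalence}. Note that the argument uses only axioms (a) and (b) of Definition~\ref{def:sequence-lattice}; in particular it does not require $C_E<\infty$, so the lemma holds for every Banach sequence lattice.
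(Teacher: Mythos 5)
Your proof is correct and follows essentially the same route as the paper's: reduce to the scalar sequence $u=(\|x_i\|)$, approximate by $v\in c_{00}(I)$, take $F=\supp(v)$, and use solidity on $I\setminus F$. Your direct bound $\|u\chi_{I\setminus F}\|_E=\|(u-v)\chi_{I\setminus F}\|_E\le\|u-v\|_E$ even gives $\varepsilon$ instead of the paper's $2\varepsilon$, but this is only a cosmetic improvement of the same argument.
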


\begin{proof}
Let $x=(x_i)$ lie in $X:=(\bigoplus_{i\in I}X_i)_E$ and set $u:=(\|x_i\|)\in E$.
Since $\czerozero(I)$ is dense in~$E$, there is $v\in\czerozero(I)$ with $\|u-v\|_E<\varepsilon$.
Let $F:=\mathrm{supp}(v)$.
Then $u\chi_F-v=u\chi_F-v\chi_F=(u-v)\chi_F$, so solidity gives
$\|u\chi_F-v\|_E\le \|u-v\|_E<\varepsilon$.
Therefore,
\[
\|x-x\chi_F\|=\|(\|x_i\|\chi_{I\setminus F})\|_E
=\|u-u\chi_F\|_E
\le \|u-v\|_E+\|v-u\chi_F\|_E <2\varepsilon,
\]
and $x\chi_F\in X_F$.
\end{proof}

%====================== Main theorem ======================%
\section{Amenability of unconditional $E$-sums}\label{sec:main}

We can now state and prove our main result.

\begin{theorem}\label{thm:main}
Let $I$ be a non-empty set, let $E$ be a Banach sequence lattice on~$I$, let $(A_i)_{i\in I}$ be a family of Banach algebras, and set
\[
A:=\Big(\bigoplus_{i\in I}A_i\Big)_{\!E}.
\]
Then:
\begin{enumerate}[label={\rm(\arabic*)},leftmargin=3.2em]
\item One always has
\[
\sup_{i\in I}\AM(A_i)\le \AM(A).
\]
\item Assume, in addition, that
\[
C_E=\sup\{\|\chi_F\|_E: F\in \cF(I)\}<\infty.
\]
Then $A$ is amenable if and only if $\sup_{i\in I}\AM(A_i)<\infty$.  In this case,
\[
\sup_{i\in I}\AM(A_i)\ \le\ \AM(A)\ \le\ C_E^2\,\sup_{i\in I}\AM(A_i).
\]
\end{enumerate}
\end{theorem}

\begin{proof}
For each $i\in I$, the coordinate projection $\pi_i\colon A\to A_i$ is a contractive surjective homomorphism
(Proposition~\ref{prop:Esum-algebra}).  Lemma~\ref{lem:quotient} gives
$\AM(A_i)\le \AM(A)$, hence $\sup_{i\in I}\AM(A_i)\le \AM(A)$.
This proves (1).  In particular, if $\sup_{i\in I}\AM(A_i)=+\infty$, then $\AM(A)=+\infty$ and $A$ is not amenable.

For (2), assume that $C_E<\infty$ and that $S:=\sup_{i\in I}\AM(A_i)<\infty$.
For each $F\in\cF(I)$ let $A_F\subseteq A$ denote the closed subalgebra of elements supported in~$F$.
By Lemma~\ref{lem:dense-finite-support}, $\bigcup_{F\in\cF(I)}A_F$ is dense in~$A$.

Fix $F\in\cF(I)$.
Let $B_F:=\bigoplus_{i\in F} A_i$ equipped with the $\ell^\infty$-norm and coordinatewise multiplication.
By Lemma~\ref{lem:finite-infty},
\[
\AM(B_F)=\max_{i\in F}\AM(A_i)\le S.
\]
Consider the identity map $q_F\colon B_F\to A_F$ (same underlying vectors, different norms).
For $b=(b_i)_{i\in F}\in B_F$ we have
\[
\|q_F(b)\|_{A_F}=\|(\|b_i\|)_{i\in F}\|_E \le \|b\|_\infty\,\|\chi_F\|_E \le C_E\,\|b\|_\infty,
\]
where the key step uses solidity and $|(\|b_i\|)|\le \|b\|_\infty \chi_F$.
Hence $\|q_F\|\le C_E$, and $q_F$ is a surjective homomorphism.
Lemma~\ref{lem:quotient} yields
\[
\AM(A_F)\le \|q_F\|^2\,\AM(B_F)\le C_E^2\,S.
\]
Finally, Lemma~\ref{lem:dense-union} applied to the dense family $(A_F)_{F\in\cF(I)}$ gives
\[
\AM(A)\le \sup_{F\in\cF(I)}\AM(A_F)\le C_E^2\,S<\infty,
\]
so $A$ is amenable and the stated two-sided estimate holds.  The converse implication in (2) follows from (1).
\end{proof}

\begin{remark}\label{rem:CE1}
If $C_E=1$ then Theorem~\ref{thm:main} yields the exact formula
$\AM(A)=\sup_{i\in I}\AM(A_i)$.
This occurs, for instance, for $E=c_0(I)$ with the usual supremum norm.
\end{remark}

The next proposition shows that the finiteness of~$C_E$ is close to optimal in typical situations.

\begin{proposition}\label{prop:CE-necessary}
Let $E$ be a Banach sequence lattice on~$I$, let $(A_i)_{i\in I}$ be a family of Banach algebras, and set
\[
A:=\Big(\bigoplus_{i\in I}A_i\Big)_{\!E}.
\]
Assume that $A$ has a bounded right approximate identity $(e_\alpha)_\alpha$, and put
\[
M:=\sup_\alpha \|e_\alpha\|<\infty.
\]
Let
\[
J:=\{j\in I:\ A_j\neq\{0\}\}.
\]
Then
\[
\sup\bigl\{\|\chi_F\|_E:\ F\in\cF(I),\ F\subseteq J\bigr\}=C_{E|J}\le M<\infty.
\]
In particular, if $A$ is amenable then $C_{E|J}<\infty$; and if moreover
$A_i\neq\{0\}$ for all $i\in I$, then $C_E\le M<\infty$.
\end{proposition}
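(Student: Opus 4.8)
The plan is to test the approximate identity against a single element supported on~$F$. Since $A_j\neq\{0\}$ for every $j\in J$, fix for each such $j$ an element $a_j\in A_j$ with $\|a_j\|=1$. Given a finite $F\subseteq J$ (the case $F=\emptyset$ being trivial since $\|\chi_\emptyset\|_E=0$), set
\[
a:=\sum_{j\in F}\iota_j(a_j)\in A .
\]
The family of coordinate norms of $a$ is precisely $\chi_F$, so $\|a\|=\|\chi_F\|_E$ by Definition~\ref{def:Esum}; this is the quantity we must bound from above.

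Next I would use that $(e_\alpha)_\alpha$ is a \emph{right} approximate identity: for this fixed $a$ there is an index $\alpha$ with $\|a-ae_\alpha\|<\tfrac12$. Applying the contractive homomorphism $\pi_j$ for $j\in F$ (Proposition~\ref{prop:Esum-algebra}) and using $\pi_j(ae_\alpha)=a_j\,\pi_j(e_\alpha)$ gives $\|a_j-a_j\,\pi_j(e_\alpha)\|\le\|a-ae_\alpha\|<\tfrac12$, hence $\|a_j\,\pi_j(e_\alpha)\|>\tfrac12$; since $\|a_j\,\pi_j(e_\alpha)\|\le\|a_j\|\,\|\pi_j(e_\alpha)\|=\|\pi_j(e_\alpha)\|$, we conclude $\|\pi_j(e_\alpha)\|>\tfrac12$ for every $j\in F$. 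Thus the family $\bigl(\|\pi_i(e_\alpha)\|\bigr)_{i\in I}\in E$ dominates $\tfrac12\chi_F$ pointwise.

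Solidity of~$E$ (Definition~\ref{def:sequence-lattice}(b)) now yields $\tfrac12\|\chi_F\|_E\le\bigl\|(\|\pi_i(e_\alpha)\|)_{i\in I}\bigr\|_E=\|e_\alpha\|\le M$, i.e.\ $\|\chi_F\|_E\le 2M$; since $F$ was an arbitrary finite subset of $J$, the claimed supremum is at most $2M$. (Letting the tolerance $\tfrac12$ shrink to $0$ would in fact give the sharper bound $M$, but $2M$ is all that is needed.) For the final assertions, recall that an amenable Banach algebra possesses a bounded two-sided---in particular right---approximate identity; applying what we just proved to any such net shows that the supremum over finite $F\subseteq J$ is finite for every admissible~$J$, and taking $J=I$ when all $A_i$ are non-zero yields $C_E<\infty$. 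I do not anticipate a real obstacle here: the argument is a short test-element computation, the only external input being the standard fact that amenable algebras have bounded approximate identities, and the only point needing care is pushing the $E$-norm estimate down to individual coordinates (via contractivity of the $\pi_i$) and back up (via solidity).
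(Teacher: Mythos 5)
Your proof is correct and follows essentially the same route as the paper: extract $\|\pi_j(e_\alpha)\|\ge\tfrac12$ for all $j\in F$ from the approximate identity, then use solidity to bound $\tfrac12\|\chi_F\|_E$ by $\|e_\alpha\|\le M$. The only cosmetic difference is that you test the net against the single element $\sum_{j\in F}\iota_j(a_j)$, whereas the paper tests each coordinate element $u_j$ separately and then picks one $\alpha$ working for all $j\in F$; both are fine, and your parenthetical remark that the bound can be sharpened to $M$ is also correct.
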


\begin{proof}
Fix $j\in J$ and choose $a_j\in A_j$ with $\|a_j\|=1$.
Let $u_j\in A$ be supported at~$j$ with coordinate $u_j(j)=a_j$.
Since $(e_\alpha)$ is a right approximate identity,
\[
\|u_j-u_je_\alpha\|\longrightarrow 0.
\]
Because multiplication is coordinatewise, $(u_je_\alpha)(j)=a_j\,\pi_j(e_\alpha)$, hence
\[
\|a_j-a_j\pi_j(e_\alpha)\|\longrightarrow 0 \qquad (j\in J).
\]

Now fix $F\in\cF(I)$ with $F\subseteq J$ and let $0<\delta<1$.
Choose $\alpha$ so large that $\|a_j-a_j\pi_j(e_\alpha)\|\le \delta$ for all $j\in F$.
Then
\[
\|a_j\pi_j(e_\alpha)\|\ge \|a_j\|-\|a_j-a_j\pi_j(e_\alpha)\|\ge 1-\delta
\qquad (j\in F).
\]
By submultiplicativity, $\|a_j\pi_j(e_\alpha)\|\le \|a_j\|\,\|\pi_j(e_\alpha)\|=\|\pi_j(e_\alpha)\|$,
so $\|\pi_j(e_\alpha)\|\ge 1-\delta$ for all $j\in F$.

Let $P_F\colon A\to A$ denote the coordinate projection onto~$F$.  By solidity,
\[
(1-\delta)\|\chi_F\|_E
\le \bigl\|(\|\pi_j(e_\alpha)\|)_{j\in F}\bigr\|_E
=\|P_F(e_\alpha)\|
\le \|e_\alpha\|\le M.
\]
Thus $\|\chi_F\|_E\le M/(1-\delta)$; letting $\delta\downarrow0$ gives
$\|\chi_F\|_E\le M$.  Taking the supremum over all such $F$ proves the claim.
\end{proof}

\begin{remark}\label{rem:main-necessity-bridge}
Theorem~\ref{thm:main} gives a sufficiency criterion once $C_E<\infty$ is assumed.
Proposition~\ref{prop:CE-necessary} explains why this hypothesis is not merely technical in
the presence of infinitely many active coordinates: for any amenable $E$-sum, the same
boundedness condition must hold on the active support $J=\{i\in I:A_i\neq\{0\}\}$.
Corollary~\ref{cor:iff} is precisely the combination of these two facts.
\end{remark}

\begin{corollary}\label{cor:iff}
Let $A=(\oplus_{i\in I}A_i)_E$ and let $J:=\{i\in I: A_i\neq\{0\}\}$.
Assume that $J$ is infinite. Then $A$ is amenable if and only if
$C_{E|J}<\infty$ and $\sup_{i\in J}\AM(A_i)<\infty$.
\end{corollary}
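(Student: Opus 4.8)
The plan is to replace the index set $I$ by the \emph{support} $J=\{i\in I:A_i\neq\{0\}\}$ and then read off the two implications from Theorem~\ref{thm:main} and Proposition~\ref{prop:CE-necessary}, respectively.

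First I would check that $E|J$ (Remark~\ref{rem:restriction}) is again a Banach sequence lattice on $J$: it is a Banach space, being the quotient of $E$ by the closed subspace $\{x\in E:x|_J=0\}$ (closed because each coordinate functional on $E$ is contractive); $c_{00}(J)$ is dense in it and the inclusion $E|J\hookrightarrow\ell_\infty(J)$ is contractive, by Remark~\ref{rem:restriction}; and solidity is inherited from $E$ by extending a candidate element of $\ell_\infty(J)$ by zero off $J$, comparing it with an extension of a given member of $E|J$, and applying solidity of $E$. By Remark~\ref{rem:restriction} one also has $C_{E|J}=\sup\{\|\chi_F\|_E:F\in\cF(I),\ F\subseteq J\}$. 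Since $A_i=\{0\}$ precisely when $i\notin J$, the obvious coordinate map
\[
\Bigl(\bigoplus_{i\in J}A_i\Bigr)_{\!E|J}\;\longrightarrow\;A=\Bigl(\bigoplus_{i\in I}A_i\Bigr)_{\!E}
\]
is an isometric isomorphism of Banach algebras. Thus it suffices to decide amenability of the $E|J$-sum of the family $(A_i)_{i\in J}$, all of whose summands are non-zero, over the infinite set $J$.

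For ($\Leftarrow$): if $C_{E|J}<\infty$ and $\sup_{i\in J}\AM(A_i)<\infty$, then Theorem~\ref{thm:main} applied with coefficient lattice $E|J$ shows that $\bigl(\bigoplus_{i\in J}A_i\bigr)_{\!E|J}$ is amenable, hence so is $A$ (one even obtains $\AM(A)\le C_{E|J}^2\,\sup_{i\in J}\AM(A_i)$). For ($\Rightarrow$): assume $A$ is amenable. Each coordinate projection $\pi_i\colon A\to A_i$ is a contractive surjective homomorphism (Proposition~\ref{prop:Esum-algebra}), so Lemma~\ref{lem:quotient} gives $\AM(A_i)\le\AM(A)$ and therefore $\sup_{i\in J}\AM(A_i)\le\AM(A)<\infty$. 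Since $J$, and hence $I$, is infinite, the ``in particular'' clause of Proposition~\ref{prop:CE-necessary} applies to this very $J$ and yields $\sup\{\|\chi_F\|_E:F\in\cF(I),\ F\subseteq J\}<\infty$, i.e.\ $C_{E|J}<\infty$. Combining the two implications gives the stated equivalence.

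The only step requiring real care is the bookkeeping of the second paragraph: verifying that $E|J$ satisfies every clause of Definition~\ref{def:sequence-lattice}---solidity and completeness being the two to watch---and that the coordinate identification is genuinely isometric, not merely bounded. Once that is in place, both directions are one-line invocations of results already proved; the hypothesis of Proposition~\ref{prop:CE-necessary} used in ($\Rightarrow$) is exactly the standard fact that an amenable Banach algebra admits a bounded approximate identity (of bound at most $\AM(A)$, extracted from $\pi^{**}(M)$ for a virtual diagonal $M$), which is already built into that proposition's ``in particular'' statement.
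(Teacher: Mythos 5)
Your proposal is correct and follows essentially the same route as the paper: the forward implication via coordinate-projection quotients (Lemma~\ref{lem:quotient}) together with Proposition~\ref{prop:CE-necessary}, and the reverse implication by applying Theorem~\ref{thm:main} to $E|J$ after the isometric identification of $A$ with $(\oplus_{i\in J}A_i)_{E|J}$. Your extra care in verifying that $E|J$ satisfies all clauses of Definition~\ref{def:sequence-lattice} is a useful elaboration of what the paper leaves implicit in Remark~\ref{rem:restriction}, but it does not change the argument.
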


\begin{proof}
If $A$ is amenable, then each $A_i$ is a quotient of $A$ via the coordinate projection,
so $\sup_{i\in J}\AM(A_i)\le \AM(A)<\infty$. Moreover, Proposition~\ref{prop:CE-necessary}
yields $C_{E|J}<\infty$.

Theorem~\ref{thm:main} applied to the restriction $E|J$ shows that $(\oplus_{i\in J}A_i)_{E|J}$ is amenable. Since $A$ is canonically isometrically identifiable with $(\oplus_{i\in J}A_i)_{E|J}$ (the other coordinates being $0$; the equality of the norms follows from the same solidity argument as in Lemma~\ref{lem:restriction-properties}), it follows that $A$ is amenable.
\end{proof}

\section{Consequences and examples}\label{sec:applications}

We record several consequences.

\subsection{The $c_0$-sum over arbitrary index sets}

\begin{corollary}[$c_0$-sums, possibly uncountable]\label{cor:c0}
Let $I$ be a non-empty set and let $(A_i)_{i\in I}$ be a family of Banach algebras.
Set $A:=c_0\text{-}\bigoplus_{i\in I}A_i$, i.e.\ the $E$-sum with $E=c_0(I)$ and the supremum norm.
Then
\[
\AM(A)=\sup_{i\in I}\AM(A_i).
\]
In particular, $A$ is amenable if and only if the family $(\AM(A_i))_{i\in I}$ is bounded.
\end{corollary}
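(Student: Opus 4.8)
The plan is to obtain this as a direct specialisation of Theorem~\ref{thm:main}: the whole point is that for $E = c_0(I)$ with the supremum norm one has $C_E = 1$, so the two-sided estimate collapses to an equality. Concretely, I would first record that for every non-empty $F \in \cF(I)$ the characteristic function $\chi_F \in c_0(I)$ satisfies $\|\chi_F\|_E = \|\chi_F\|_\infty = 1$; hence $C_E = \sup_F \|\chi_F\|_E = 1 < \infty$, and the hypothesis of Theorem~\ref{thm:main} is met.

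With that in hand I would split into the two cases governed by Theorem~\ref{thm:main}(1). If $\sup_{i\in I}\AM(A_i) = +\infty$, then $A$ is not amenable, so by the convention of Definition~\ref{def:AM} we have $\AM(A) = +\infty = \sup_{i\in I}\AM(A_i)$ and the asserted formula holds. If instead $S := \sup_{i\in I}\AM(A_i) < \infty$, then $A$ is amenable and the quantitative bound of Theorem~\ref{thm:main}(2) gives
\[
S \;\le\; \AM(A) \;\le\; C_E^2\,S \;=\; S,
\]
so $\AM(A) = S$. The final sentence of the corollary is then just a reformulation of Theorem~\ref{thm:main}(1) for this choice of $E$, or alternatively follows from the displayed identity together with the convention $\AM(A)=+\infty$ when $A$ is not amenable.

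I do not expect any genuine obstacle here: all the work has already been done inside Theorem~\ref{thm:main} (and, further upstream, in the finite-$\ell^\infty$-sum formula Lemma~\ref{lem:finite-infty} and the dense-union Lemma~\ref{lem:dense-union}). The only verification specific to this corollary is the trivial identity $\|\chi_F\|_\infty = 1$ for non-empty finite $F$, which is precisely the statement that $c_0(I)$ carries its usual sup norm. Equivalently, one could simply cite Remark~\ref{rem:CE1}, since $E = c_0(I)$ is the prototypical case $C_E = 1$ recorded there.
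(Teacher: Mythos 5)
Your proposal is correct and follows exactly the paper's own route: the paper likewise observes that $C_E=1$ for $E=c_0(I)$ and invokes Theorem~\ref{thm:main} together with Remark~\ref{rem:CE1}. Your extra case split for $\sup_i\AM(A_i)=+\infty$ and the explicit check $\|\chi_F\|_\infty=1$ merely spell out details the paper leaves implicit.
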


\begin{proof}
For comparison, this identity is proved in \cite[Proposition~2.3.19]{Runde2020ABA}.
For $E=c_0(I)$ we have $C_E=1$, so the conclusion follows from Theorem~\ref{thm:main} and Remark~\ref{rem:CE1}.
\end{proof}

\subsection{Weighted $c_0$-sums}

Weighted $c_0$-norms provide a simple family of coefficient lattices where the
constant $C_E$ is completely explicit and captures the uniformity of the weights.

\begin{proposition}\label{prop:weighted-c0}
Let $I$ be a non-empty set and let $w=(w_i)_{i\in I}$ satisfy $w_i>0$ for all $i$.
Define
\[
E=c_0(I,w):=\Bigl\{x=(x_i)_{i\in I}\in\mathbb{C}^I:
\{i\in I:\ w_i|x_i|\ge \varepsilon\}\in \cF(I)\ \text{for every }\varepsilon>0\Bigr\},
\]
with norm
\[
\|x\|_E:=\sup_{i\in I} w_i|x_i|.
\]
Then:
\begin{enumerate}[label={\rm(\alph*)}]
\item One has $E\subseteq \ell_\infty(I)$ if and only if $\inf_{i\in I} w_i>0$.
Moreover, $E$ is a Banach sequence lattice on $I$ in the sense of
Definition~\ref{def:sequence-lattice} if and only if $\inf_{i\in I} w_i\ge 1$.
\item For every finite $F\subset I$,
\[
\|\chi_F\|_E=\max_{i\in F} w_i,
\qquad\text{and hence}\qquad
C_E=\sup_{i\in I} w_i\in[1,\infty].
\]
\item Assume in addition that $\inf_{i\in I} w_i\ge 1$ and let
\(
A=\bigl(\bigoplus_{i\in I} A_i\bigr)_E
\)
be the corresponding $E$-sum Banach algebra. Then
\[
\sup_{i\in I}\AM(A_i)\ \le\ \AM(A)\ \le\ C_E^2\,\sup_{i\in I}\AM(A_i)
\ =\ \Bigl(\sup_{i\in I} w_i\Bigr)^2\,\sup_{i\in I}\AM(A_i).
\]
In particular, if $\sup_i w_i<\infty$ and $\sup_i\AM(A_i)<\infty$, then $A$ is amenable.
If $I$ is infinite and each $A_i$ is non-zero, then amenability of $A$ forces $\sup_i w_i<\infty$.
\end{enumerate}
\end{proposition}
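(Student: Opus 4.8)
The plan is to handle the three parts in order, after one structural observation that trivialises most of the lattice axioms. First I would note that the rescaling map $T\colon x\mapsto(w_ix_i)_{i\in I}$ is a linear bijection of $E$ onto $c_0(I)$, isometric for the norms in play since $\|x\|_E=\sup_iw_i|x_i|=\|Tx\|_\infty$, and that it carries $c_{00}(I)$ onto $c_{00}(I)$ because each $w_i>0$. Consequently $E$ is a Banach space in which $c_{00}(I)$ is dense (and which contains $c_{00}(I)$), and solidity is automatic: if $|x_i|\le|y_i|$ for all $i$ and $y\in E$, then $w_i|x_i|\le w_i|y_i|$, so $\{i:w_i|x_i|\ge\varepsilon\}\subseteq\{i:w_i|y_i|\ge\varepsilon\}$ is finite and $\|x\|_E\le\|y\|_E$. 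Hence the only conditions of Definition~\ref{def:sequence-lattice} that can fail are the containment $E\subseteq\ell_\infty(I)$ and the contractive inclusion $\|x\|_\infty\le\|x\|_E$.

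For part~(a) I would argue as follows. If $c:=\inf_iw_i>0$ then $|x_i|\le\|x\|_E/c$ for every $x\in E$, so $E\subseteq\ell_\infty(I)$. If instead $\inf_iw_i=0$, then $\{i:w_i<1/n\}$ is infinite for each $n$ (otherwise $\inf_iw_i>0$), so one can choose distinct indices $(i_n)_n$ with $w_{i_n}\to0$; the element $x$ defined by $x_{i_n}:=w_{i_n}^{-1/2}$ and $x_i:=0$ for $i\notin\{i_n:n\in\mathbb N\}$ then lies in $E$ (the sequence $w_i|x_i|$ is supported on $\{i_n\}$ with value $w_{i_n}^{1/2}\to0$) but $|x_{i_n}|=w_{i_n}^{-1/2}\to\infty$, so $x\notin\ell_\infty(I)$. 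For the ``moreover'' statement, evaluating $\|x\|_\infty\le\|x\|_E$ at $x=\delta_i$ forces $w_i\ge1$ for every $i$, i.e.\ $\inf_iw_i\ge1$; conversely, $w_i\ge1$ for all $i$ gives $|x_i|\le w_i|x_i|$ pointwise, hence $\|x\|_\infty\le\|x\|_E$, while $\inf_iw_i\ge1$ also yields $E\subseteq\ell_\infty(I)$ by the previous sentence, so Definition~\ref{def:sequence-lattice} holds in full. Part~(b) is then immediate: for finite $F$ one has $\|\chi_F\|_E=\sup_{i\in F}w_i=\max_{i\in F}w_i$, and taking the supremum over all finite $F$ (singletons included) gives $C_E=\sup_iw_i$.

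For part~(c), assume $\inf_iw_i\ge1$, so by~(a) the space $E$ is a Banach sequence lattice and $A=\bigl(\bigoplus_{i\in I}A_i\bigr)_E$ is a Banach algebra by Proposition~\ref{prop:Esum-algebra}. The inequality $\sup_i\AM(A_i)\le\AM(A)$ holds without any restriction: each coordinate projection $\pi_i\colon A\to A_i$ is a contractive surjective homomorphism, so $\AM(A_i)\le\AM(A)$ by Lemma~\ref{lem:quotient}. For the upper bound, when $C_E=\sup_iw_i<\infty$ Theorem~\ref{thm:main} gives $\AM(A)\le C_E^2\sup_i\AM(A_i)$, which by~(b) equals $(\sup_iw_i)^2\sup_i\AM(A_i)$; when $C_E=\infty$ the bound is vacuous. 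The remaining assertions then follow by specialisation: if $\sup_iw_i<\infty$ and $\sup_i\AM(A_i)<\infty$ then $C_E<\infty$ and Theorem~\ref{thm:main}(1) yields amenability of $A$; and if $I$ is infinite and every $A_i$ is unital (so that $A_i\neq\{0\}$ for all $i$), then an amenable $A$ possesses a bounded approximate identity, so the final clause of Proposition~\ref{prop:CE-necessary} forces $C_E=\sup_iw_i<\infty$.

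The only step that is not pure bookkeeping is the construction of an element of $E\setminus\ell_\infty(I)$ in part~(a) when $\inf_iw_i=0$; I do not expect a genuine obstacle, and the identification $E\cong c_0(I)$ recorded at the outset keeps the verification of the lattice axioms very short.
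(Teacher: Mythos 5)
Your proposal is correct and follows essentially the same route as the paper: testing at $\delta_i$ for the contractivity condition, an explicit unbounded element of $E$ when $\inf_i w_i=0$ (the paper uses $x_{i_n}=n$ with $w_{i_n}<1/n^2$, you use $x_{i_n}=w_{i_n}^{-1/2}$ --- both work), direct computation of $\|\chi_F\|_E$, and then citing Theorem~\ref{thm:main} and Proposition~\ref{prop:CE-necessary} for part~(c). Your opening observation that the rescaling $x\mapsto(w_ix_i)$ is an isometry onto $c_0(I)$ is a nice touch that makes completeness, density of $c_{00}(I)$, and solidity explicit, points the paper leaves implicit.
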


\begin{proof}
(a) If $m:=\inf_i w_i>0$ and $x\in E$, then $|x_i|\le m^{-1}\|x\|_E$ for all $i$, so $x\in\ell_\infty(I)$.
Conversely, if $\inf_i w_i=0$, pick $(i_n)$ with $w_{i_n}<1/n^2$ and set $x_{i_n}=n$ and $x_i=0$ otherwise.
Then $w_i|x_i|\to 0$ but $x\notin\ell_\infty(I)$, so $E\nsubseteq \ell_\infty(I)$.
The inclusion $E\hookrightarrow\ell_\infty(I)$ is contractive, i.e.\ $\|x\|_\infty\le \|x\|_E$ for all $x\in E$,
if and only if $w_i\ge 1$ for all $i$ (test $x=\delta_i$ and use $|x_i|\le w_i|x_i|$).
This yields the characterisation of when $E$ is a Banach sequence lattice.

(b) For finite $F$, $\chi_F(i)=1$ on $F$ and $0$ otherwise, hence
\(
\|\chi_F\|_E=\sup_i w_i|\chi_F(i)|=\max_{i\in F} w_i
\),
and taking the supremum over $F$ gives $C_E=\sup_i w_i$.

(c) Under $\inf_i w_i\ge 1$ we have $E$ a Banach sequence lattice and $C_E=\sup_i w_i$ by (a)--(b).
The estimate for $\AM(A)$ is then exactly Theorem~\ref{thm:main}.
If moreover $I$ is infinite and all $A_i$ are non-zero, amenability of $A$ implies that $A$ has a bounded approximate identity,
so Proposition~\ref{prop:CE-necessary} forces $C_E<\infty$, i.e.\ $\sup_i w_i<\infty$.
\end{proof}

\begin{remark}\label{rem:weight-convention}
With the convention $\|x\|_E=\sup_i w_i|x_i|$, one has $\|\chi_F\|_E=\max_{i\in F} w_i$ and
$C_E=\sup_i w_i$.  If instead one defines the weighted norm by
$\|x\|=\sup_i |x_i|/w_i$, then $\|\chi_F\|=\max_{i\in F} 1/w_i$ and $C_E=\sup_i 1/w_i$.
\end{remark}

\subsection{Constant families of summands}

We now single out an explicit statement for Banach-algebra-valued sequence algebras and, more generally,
for constant families of summands.  This is the unconditional $E$-sum analogue one typically wants to
\emph{see stated plainly}, even if it appears in a longer discussion (for instance within a $J$-sum setting).

\begin{corollary}\label{cor:Piszczek}
Let $I$ be a non-empty set, let $E$ be a Banach sequence lattice on~$I$ with $C_E<\infty$, and let $B$ be a Banach algebra.
Consider the constant family $A_i=B$ ($i\in I$) and set
\[
B^{(E)}:=\Big(\bigoplus_{i\in I} B\Big)_{\!E}.
\]
Then $B^{(E)}$ is amenable if and only if $B$ is amenable. Moreover,
\[
\AM(B)\ \le\ \AM\bigl(B^{(E)}\bigr)\ \le\ C_E^2\,\AM(B).
\]
In the particular case $E=c_0(I)$ one has the exact identity $\AM(B^{(c_0)})=\AM(B)$.
\end{corollary}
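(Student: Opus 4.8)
The plan is to derive Corollary~\ref{cor:Piszczek} as an immediate specialisation of Theorem~\ref{thm:main} together with Remark~\ref{rem:CE1}, with one small additional observation needed for the ``only if'' direction. Since the family $(A_i)_{i\in I}$ is constant with $A_i=B$, we have $\AM(A_i)=\AM(B)$ for every $i$, so the quantity $\sup_{i\in I}\AM(A_i)$ appearing in Theorem~\ref{thm:main} equals $\AM(B)$ (with the usual convention that this is $+\infty$ when $B$ fails to be amenable). Plugging this into the two-sided estimate of Theorem~\ref{thm:main}(2) gives exactly
\[
\AM(B)\ \le\ \AM\bigl(B^{(E)}\bigr)\ \le\ C_E^2\,\AM(B),
\]
which is the displayed inequality.

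For the equivalence of amenability, I would argue as follows. If $B$ is amenable, then $\AM(B)<\infty$, hence $\sup_{i\in I}\AM(A_i)<\infty$, and Theorem~\ref{thm:main}(1) shows $B^{(E)}$ is amenable. Conversely, if $B^{(E)}$ is amenable, then (since $C_E<\infty$ guarantees $B^{(E)}$ is a genuine Banach algebra and the coordinate projections are available) the left-hand inequality $\AM(B)\le\AM(B^{(E)})<\infty$ forces $B$ to be amenable; alternatively one can invoke Theorem~\ref{thm:main}(1) directly, which asserts that amenability of $B^{(E)}$ is equivalent to $\sup_{i\in I}\AM(A_i)=\AM(B)<\infty$. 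Either way the two conditions are equivalent.

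For the final sentence, when $E=c_0(I)$ with the supremum norm one has $\|\chi_F\|_{c_0(I)}=1$ for every nonempty finite $F$, hence $C_E=1$. Then $C_E^2=1$, and the two-sided estimate collapses to the exact identity $\AM(B^{(c_0)})=\AM(B)$; this is precisely the content of Remark~\ref{rem:CE1} applied to the constant family, or equivalently a special case of Corollary~\ref{cor:c0}.

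There is essentially no obstacle here: the corollary is a direct unwinding of Theorem~\ref{thm:main} in the constant-family case. The only point that deserves a word of care is making sure the conventions match up when $B$ is not amenable — namely that ``$\sup_{i\in I}\AM(A_i)=+\infty$'' correctly encodes ``$B$ not amenable'' and that both inequalities remain meaningful (the left one reading $+\infty\le\AM(B^{(E)})$, forcing $\AM(B^{(E)})=+\infty$ via Theorem~\ref{thm:main}(1)) — but this is exactly the bookkeeping already built into the statement of Theorem~\ref{thm:main}, so no real work is required.
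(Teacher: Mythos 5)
Your proposal is correct and follows exactly the paper's own argument: specialise Theorem~\ref{thm:main} to the constant family (so $\sup_i\AM(A_i)=\AM(B)$), and note $C_E=1$ for $E=c_0(I)$ to collapse the two-sided estimate to an equality. The additional bookkeeping about the $+\infty$ convention is harmless and already implicit in the theorem.
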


\begin{proof}
This is the special case of Theorem~\ref{thm:main} with $\AM(A_i)=\AM(B)$ for all~$i$.
For $E=c_0(I)$ we have $C_E=1$, hence equality.
\end{proof}

\begin{remark}[A tensor-product viewpoint]\label{rem:tensor-viewpoint}
Equip $E$ with pointwise multiplication (Lemma~\ref{lem:pointwise}).
For any Banach algebra $B$, the bilinear map
\[
E\times B\longrightarrow B^{(E)},\qquad (x,b)\longmapsto (x_i b)_{i\in I},
\]
is contractive and multiplicative, hence induces a contractive algebra homomorphism
from the projective tensor product $E\hatot B$ into $B^{(E)}$ whose range contains the
finitely supported sequences and therefore is dense.
In the special cases $E=\ell^1(I)$ and $E=c_0(I)$ one recovers the classical identifications
$\ell^1(I,B)\cong \ell^1(I)\hatot B$ and $c_0(I,B)\cong c_0(I)\hatot_\varepsilon B$
(with the injective tensor product $\hatot_\varepsilon$), situating
Corollary~\ref{cor:Piszczek} alongside the standard tensor-product permanence results.
\end{remark}

\subsection{Non-$c_0$ behaviour and Orlicz spaces}

A standard obstruction to amenability is the absence of a bounded approximate identity.
The next corollary illustrates how Proposition~\ref{prop:CE-necessary} rules out amenability for many
classical unconditional sequence norms when infinitely many non-zero summands are present.

\begin{corollary}\label{cor:lp-not-amenable}
Let $I$ be infinite and let $(A_i)_{i\in I}$ be non-zero Banach algebras.
If $E$ is a Banach sequence lattice on~$I$ with $C_E=+\infty$, then the $E$-sum
$A=(\bigoplus_{i\in I}A_i)_E$ is not amenable.
In particular, for $1\le p<\infty$, the algebra $(\bigoplus_{i\in I}A_i)_{\ell_p(I)}$ is not amenable.
\end{corollary}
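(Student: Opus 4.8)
The plan is to derive this as the contrapositive of Proposition~\ref{prop:CE-necessary}. The one ingredient beyond that proposition is the classical fact that an amenable Banach algebra admits a bounded two-sided approximate identity: if $M$ is a virtual diagonal for $A$, then $E_0:=\pi^{**}(M)\in A^{**}$ satisfies $aE_0=a=E_0a$ for all $a\in A$ and $\|E_0\|\le\|M\|$, and a routine Goldstine-plus-convexity argument promotes this mixed identity to a bounded approximate identity for $A$ of norm at most $\AM(A)$; in particular $A$ then has a bounded right approximate identity. This is exactly the implication already invoked in the proof of Proposition~\ref{prop:weighted-c0}(c), and it is what underlies the last sentence of Proposition~\ref{prop:CE-necessary}.

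Granting this, the first assertion is immediate. Suppose, for contradiction, that $A=(\bigoplus_{i\in I}A_i)_E$ is amenable. Since each $A_i$ is unital we have $A_i\neq\{0\}$ for every $i\in I$, so the last clause of Proposition~\ref{prop:CE-necessary}, applied with $J=I$ (using that $A$ carries a bounded right approximate identity), forces $C_E=\sup\{\|\chi_F\|_E:F\in\cF(I)\}<\infty$. This contradicts the hypothesis $C_E=+\infty$, so $A$ cannot be amenable.

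For the ``in particular'' clause I would first record that, for $1\le p<\infty$ and infinite $I$, the space $\ell_p(I)$ with its usual norm is a Banach sequence lattice in the sense of Definition~\ref{def:sequence-lattice}: density of $c_{00}(I)$ in $\ell_p(I)$ and the inclusions $c_{00}(I)\subseteq\ell_p(I)\subseteq\ell_\infty(I)$ are standard, solidity of $\|\cdot\|_p$ is clear, and condition~(c) is the elementary inequality $\|x\|_\infty\le\|x\|_p$. One then computes $\|\chi_F\|_{\ell_p}=|F|^{1/p}$ for finite $F\subseteq I$, and since $I$ is infinite these values are unbounded, whence $C_{\ell_p(I)}=+\infty$. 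The first part of the corollary then applies and yields the non-amenability of $(\bigoplus_{i\in I}A_i)_{\ell_p(I)}$.

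I do not anticipate a genuine obstacle: the corollary is a one-line consequence of Proposition~\ref{prop:CE-necessary} once the amenable-implies-bounded-approximate-identity fact is in place. The only points that merit a sentence of care are (i) ensuring the approximate identity extracted from a virtual diagonal is genuinely norm-bounded, with an explicit bound, so that a finite $M$ may legitimately be fed into Proposition~\ref{prop:CE-necessary}; and (ii) checking the normalisation axiom~(c) for $\ell_p(I)$, which is precisely where the standard $\ell_p$-norm---rather than some other lattice renorming---enters.
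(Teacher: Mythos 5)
Your proposal is correct and follows essentially the same route as the paper: the paper's proof is exactly the contrapositive of Proposition~\ref{prop:CE-necessary} via the standard fact that amenability yields a bounded approximate identity, followed by the computation $\|\chi_F\|_{\ell_p}=|F|^{1/p}$ to see that $C_{\ell_p(I)}=+\infty$ for infinite $I$. The additional details you supply (the Goldstine argument producing a BAI bounded by $\AM(A)$, and the verification that $\ell_p(I)$ satisfies Definition~\ref{def:sequence-lattice}) are correct and merely make explicit what the paper leaves implicit.
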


\begin{proof}
If $A$ were amenable, it would admit a bounded approximate identity; this would force $C_E<\infty$ by
Proposition~\ref{prop:CE-necessary}, a contradiction.
For $E=\ell_p(I)$ ($1\le p<\infty$) and $I$ infinite, one has $\|\chi_F\|_{\ell_p}=|F|^{1/p}$, hence $C_E=+\infty$.
\end{proof}

\begin{remark}\label{rem:lp-normalisation}
For $1\le p<\infty$, the usual $\ell_p(I)$ norm satisfies the normalisation in
Definition~\ref{def:sequence-lattice}(c), since $\|x\|_\infty\le \|x\|_p$ for every
$x\in\ell_p(I)$.  Thus no renorming is needed in Corollary~\ref{cor:lp-not-amenable}; the
normalisation remark above is only relevant for coefficient spaces whose customary norm does
not dominate the supremum norm.
\end{remark}

\begin{proposition}[Orlicz hearts and the constant $C_E$]\label{prop:orlicz-CE}
Let $\varphi\colon [0,\infty)\to[0,\infty)$ be an Orlicz function and let $E=h_\varphi(I)$ be the Orlicz
heart (over counting measure on $I$), equipped with the Luxemburg norm
\[
\|x\|_{\varphi}:=\inf\Bigl\{\lambda>0:\ \sum_{i\in I}\varphi\bigl(|x_i|/\lambda\bigr)\le 1\Bigr\},
\]
where $\sum_{i\in I}$ denotes the supremum of finite subsums. Let $\varphi^{-1}$ denote the generalised inverse
\[
\varphi^{-1}(s):=\inf\{t\ge 0:\ \varphi(t)\ge s\}\qquad (s\ge 0).
\]
Then:
\begin{enumerate}[label={\rm(\alph*)}]
\item For each $i\in I$, $\|\delta_i\|_{\varphi}=1/\varphi^{-1}(1)$.
\item More generally, for every finite set $F\subset I$ with $|F|=n$,
\[
\|\chi_F\|_{\varphi}=\frac{1}{\varphi^{-1}(1/n)}.
\]
Consequently,
\[
C_E
=\sup_{\substack{F\subset I\\ |F|<\infty}}\|\chi_F\|_{\varphi}
=\sup_{n\in\mathbb{N}} \frac{1}{\varphi^{-1}(1/n)}\ \in\ (0,\infty]
\quad\text{provided $I$ is infinite.}
\]
\item Assume that $I$ is infinite. Then $C_E<\infty$ if and only if $\varphi$ vanishes on a neighbourhood of $0$, i.e.
\[
a_\varphi:=\sup\{t\ge 0:\ \varphi(t)=0\}>0.
\]
In that case $C_E=1/a_\varphi$. If $\varphi(t)>0$ for all $t>0$ (in particular, for any $N$-function), then $C_E=+\infty$.
\end{enumerate}
\end{proposition}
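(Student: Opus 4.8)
The plan is to prove the three claims (a), (b), (c) in order, since each builds on the previous one.

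For part (a), I would start from the definition of the Luxemburg norm applied to $x=\delta_i$. The only summand is $\varphi(|x_i|/\lambda)=\varphi(1/\lambda)$, so the condition $\sum_{j\in I}\varphi(|x_j|/\lambda)\le 1$ reduces to $\varphi(1/\lambda)\le 1$. Thus $\|\delta_i\|_\varphi=\inf\{\lambda>0:\varphi(1/\lambda)\le 1\}$. Substituting $t=1/\lambda$, this becomes $1/\sup\{t>0:\varphi(t)\le 1\}$. The identity $\sup\{t:\varphi(t)\le 1\}=\varphi^{-1}(1)$ follows from the definition of the generalised inverse together with monotonicity and left-continuity of $\varphi$ (a standard fact about Orlicz functions; I would state it cleanly as a small observation, taking care about the edge case where $\varphi$ jumps or is flat at height $1$, but for an Orlicz function these subtleties resolve in the expected way). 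This gives $\|\delta_i\|_\varphi=1/\varphi^{-1}(1)$.

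For part (b), the same computation runs with $\chi_F$ in place of $\delta_i$: now there are exactly $n=|F|$ equal nonzero summands, so the condition is $n\,\varphi(1/\lambda)\le 1$, i.e.\ $\varphi(1/\lambda)\le 1/n$. By the same generalised-inverse identity, $\|\chi_F\|_\varphi=1/\varphi^{-1}(1/n)$. Since this depends only on $n$ and is (weakly) increasing in $n$ — because $\varphi^{-1}$ is nondecreasing, so $\varphi^{-1}(1/n)$ is nonincreasing — the supremum over all finite $F$ equals $\sup_{n\in\N}1/\varphi^{-1}(1/n)$ when $I$ is infinite. One should note $\varphi^{-1}(1/n)>0$ for each fixed $n$ (else $\|\chi_F\|_\varphi=\infty$ for some finite $F$, contradicting $\chi_F\in c_{00}(I)\subseteq E$), so each term is finite and positive, but the supremum may be $+\infty$.

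For part (c), I would analyse the monotone limit $\lim_{n\to\infty}\varphi^{-1}(1/n)=\inf_n\varphi^{-1}(1/n)$. By left-continuity/monotonicity arguments, this limit equals $\sup\{t\ge 0:\varphi(t)=0\}=a_\varphi$: indeed $\varphi^{-1}(1/n)=\inf\{t:\varphi(t)\ge 1/n\}$, and as $n\to\infty$ the threshold $1/n\downarrow 0$, so the infimum decreases to $\inf\{t:\varphi(t)>0\}=a_\varphi$ (using that $\varphi(t)>0$ iff $\varphi(t)\ge 1/n$ for some $n$). Hence $C_E=\sup_n 1/\varphi^{-1}(1/n)=1/\lim_n\varphi^{-1}(1/n)=1/a_\varphi$, with the convention $1/0=+\infty$. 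Therefore $C_E<\infty$ iff $a_\varphi>0$, i.e.\ iff $\varphi$ vanishes on a neighbourhood of $0$; and if $\varphi(t)>0$ for all $t>0$ (as for any $N$-function, which additionally satisfies $\varphi(t)/t\to 0$ as $t\to 0$), then $a_\varphi=0$ and $C_E=+\infty$. The main obstacle is purely technical: pinning down the generalised-inverse identities $\sup\{t:\varphi(t)\le s\}=\varphi^{-1}(s)$ and $\inf_n\varphi^{-1}(1/n)=a_\varphi$ with the correct handling of one-sided continuity and the flat/jump behaviour of $\varphi$ near the relevant levels; once those lemmas are in place, (a)--(c) are immediate substitutions.
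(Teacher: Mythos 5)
Your proposal is correct and follows essentially the same route as the paper: reduce the Luxemburg norm condition to $n\,\varphi(1/\lambda)\le 1/n$-type inequalities, invert via the generalised inverse, and identify $\lim_{s\downarrow 0}\varphi^{-1}(s)$ with $a_\varphi$. Your extra care about the flat/jump behaviour of $\varphi$ at the relevant levels is harmless (and in fact vacuous here, since a convex Orlicz function with $\varphi(0)=0$ cannot be constant on an interval at any positive level), so the two arguments coincide.
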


\begin{proof}
(a) For $\delta_i$, the defining inequality becomes $\varphi(1/\lambda)\le 1$, i.e.\ $1/\lambda\le \varphi^{-1}(1)$.

(b) If $F$ is finite with $|F|=n$, then
\[
\sum_{i\in I}\varphi\bigl(|\chi_F(i)|/\lambda\bigr)=\sum_{i\in F}\varphi(1/\lambda)=n\,\varphi(1/\lambda).
\]
Thus $\|\chi_F\|_\varphi$ is the infimum of $\lambda>0$ such that $n\,\varphi(1/\lambda)\le 1$,
equivalently $\varphi(1/\lambda)\le 1/n$, i.e.\ $1/\lambda\le \varphi^{-1}(1/n)$.

(c) Since $1/n\downarrow 0$ and $\varphi^{-1}$ is non-decreasing,
\[
\inf_{n\in\mathbb N}\varphi^{-1}(1/n)=\lim_{n\to\infty}\varphi^{-1}(1/n)=\lim_{s\downarrow 0}\varphi^{-1}(s)=:a_\varphi.
\]
Hence $C_E<\infty$ iff $a_\varphi>0$, and then $\sup_n 1/\varphi^{-1}(1/n)=1/a_\varphi$.
If $\varphi(t)>0$ for all $t>0$, then $\varphi^{-1}(s)\downarrow 0$ as $s\downarrow 0$, so $C_E=\infty$.
\end{proof}

\begin{remark}[Connection with $\Delta_2$ at $0$]\label{rem:orlicz-delta2}
For Orlicz sequence spaces, the condition that $c_{00}(I)$ be dense in $\ell_\varphi(I)$ is
equivalent to a $\Delta_2$-type condition at $0$ on $\varphi$ (and in any case $h_\varphi(I)$ is,
by definition, the closure of $c_{00}(I)$).  The constant $C_E$ is typically infinite for the usual
Orlicz functions (\emph{e.g.}, \ $N$-functions), since these satisfy $\varphi(t)>0$ for all $t>0$.
\end{remark}

\begin{remark}[Normalisation]\label{rem:orlicz-normalisation}
To fit Definition~\ref{def:sequence-lattice} exactly, one may multiply the Luxemburg norm by a fixed scalar so that the inclusion into $\ell_\infty(I)$ is contractive.  This harmless renorming preserves the underlying algebra and changes $C_E$ only by the same scalar; in particular, it does not affect whether $C_E$ is finite.
\end{remark}

\begin{remark}\label{rem:weak-amenability}
Corollary~\ref{cor:lp-not-amenable} and Proposition~\ref{prop:orlicz-CE} highlight the rigidity of \emph{Johnson}
amenability for sequence-type constructions: for typical spaces (like $\ell_p$), the growth of $\|\chi_F\|$
forces the failure of a bounded approximate identity
(and hence amenability) as soon as infinitely many non-zero coordinates are present.
By contrast, \emph{weak amenability} (vanishing of continuous derivations into the dual)
may persist for Banach-algebra-valued sequence algebras under additional hypotheses;
see, for instance, \cite{KoczorowskiPiszczek2024WeakAmenability}, where weak amenability of
$c_0(A)$ and $\ell_p(A)$ is characterised for arbitrary Banach algebras $A$.
\end{remark}

\subsection{Sharpness of the factor $C_E^2$}

\begin{proposition}\label{prop:CE2-sharp}
For $n\in\mathbb N$, let $E=\ell_2^n$ (with its usual norm) viewed as a Banach
sequence lattice on $\{1,\dots,n\}$, and let $A_i=\mathbb C$ for $i=1,\dots,n$.
Then
\[
A:=\Big(\bigoplus_{i=1}^n A_i\Big)_E \cong (\mathbb C^n,\ \|\cdot\|_2)
\quad\text{and}\quad
\AM(A)=n=C_E^2.
\]
In particular, the constant $C_E^2$ in Theorem~\ref{thm:main}
(and hence in Corollary~\ref{cor:Piszczek}) cannot, in general, be replaced by
$C_E$.
\end{proposition}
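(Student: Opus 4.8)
The plan is to pin down $\AM(A)$ exactly, exploiting that $A$ here is a finite-dimensional, commutative, semisimple algebra. First I would unwind the definitions: for $E=\ell_2^n$ one has $\|\chi_F\|_{\ell_2^n}=|F|^{1/2}$ for every $F\subseteq\{1,\dots,n\}$, so $C_E=n^{1/2}$ and $C_E^2=n$; and Definition~\ref{def:Esum} with $A_i=\mathbb C$ identifies $A$ isometrically and algebraically with $\mathbb C^n$ under coordinatewise multiplication and the $\ell_2$-norm. Write $e_1,\dots,e_n$ for the standard basis vectors of $\mathbb C^n$; these are the minimal idempotents, with $e_ie_j=\delta_{ij}e_i$ and identity $\mathbf 1=\sum_i e_i$. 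Since $A$ is finite-dimensional it is reflexive, $A\hatot A=A\otimes A$ carries the projective norm, $(A\hatot A)^{**}=A\otimes A$ and $\pi^{**}=\pi$; the bimodule actions of Definition~\ref{def:AM} and their ``canonical extension'' to the bidual are just the algebraic actions $a\cdot(x\otimes y)=(ax)\otimes y$ and $(x\otimes y)\cdot a=x\otimes(ya)$. Consequently a virtual diagonal for $A$ is precisely an element $M\in A\otimes A$ with $a\cdot M=M\cdot a$ for all $a\in A$ and $\pi(M)=\mathbf 1$.

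The key structural observation is that this $M$ is \emph{unique}. Expanding $M=\sum_{i,j}c_{ij}\,e_i\otimes e_j$, the identity $a\cdot M=M\cdot a$ for $a=\sum_k a_k e_k$ reads $\sum_{i,j}c_{ij}a_i\,e_i\otimes e_j=\sum_{i,j}c_{ij}a_j\,e_i\otimes e_j$, whence $c_{ij}(a_i-a_j)=0$ for every choice of scalars $(a_k)$; picking $(a_k)$ with $a_i\ne a_j$ forces $c_{ij}=0$ whenever $i\ne j$. Then $\pi(M)=\sum_i c_{ii}e_i=\mathbf 1=\sum_i e_i$ forces $c_{ii}=1$ for all $i$. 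Hence the only virtual diagonal is $M=\sum_{i=1}^n e_i\otimes e_i$, and therefore $\AM(A)=\|M\|_{A\hatot A}$.

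It then remains to evaluate the projective tensor norm of $M=\sum_i e_i\otimes e_i$ in $\ell_2^n\hatot\ell_2^n$. The displayed representation gives the upper bound $\|M\|\le\sum_i\|e_i\|_2\|e_i\|_2=n$, consistent with the estimate $\AM(A)\le C_E^2\,\AM(\mathbb C)=n$ of Theorem~\ref{thm:main}. For the matching lower bound I would pair $M$ against the bounded bilinear form $\beta(x,y):=\sum_{i=1}^n x_iy_i$ on $\ell_2^n\times\ell_2^n$, which has norm $1$ by the Cauchy--Schwarz inequality and so defines a norm-one functional on $\ell_2^n\hatot\ell_2^n$; its value on $M$ is $\sum_i\beta(e_i,e_i)=n$, giving $\|M\|\ge n$. (Equivalently, under the isometric identification $\ell_2^n\hatot\ell_2^n\cong N(\ell_2^n)$ of the projective tensor square of a finite-dimensional Hilbert space with the trace-class operators, $M$ corresponds to the identity operator, whose nuclear norm is the sum of its $n$ singular values, all equal to $1$.) Thus $\AM(A)=n=C_E^2$; since $\sup_i\AM(A_i)=\AM(\mathbb C)=1$, the bound $C_E^2\sup_i\AM(A_i)$ of Theorem~\ref{thm:main} is attained while $C_E\sup_i\AM(A_i)=n^{1/2}$ is strictly smaller for $n\ge 2$, which is exactly the asserted failure of $C_E$ as a substitute for $C_E^2$ in Theorem~\ref{thm:main} and Corollary~\ref{cor:Piszczek}.

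I expect the only genuine obstacle to lie in the first paragraph: one must check carefully that, because $A$ is finite-dimensional, the virtual-diagonal conditions formulated in $(A\hatot A)^{**}$ collapse to the purely algebraic diagonal conditions in $A\otimes A$, so that the uniqueness argument is licit. Everything after that is a short explicit computation.
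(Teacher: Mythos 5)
Your proposal is correct and follows essentially the same route as the paper: uniqueness of the diagonal $M=\sum_i e_i\otimes e_i$ via the commutation and $\pi(M)=\mathbf 1$ conditions, the upper bound $\|M\|\le n$ from the explicit representation, and the lower bound by pairing against a norm-one functional coming from the Euclidean inner product (the paper uses the Riesz map, you use the bilinear form $\sum_i x_iy_i$ — the same trace functional in disguise). Your explicit remark that finite-dimensionality collapses the bidual conditions to the algebraic ones is a worthwhile clarification that the paper leaves implicit.
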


\begin{proof}
Identify $A$ with $\mathbb C^n$ with pointwise multiplication and $\ell_2$-norm.
Write $e_1,\dots,e_n$ for the standard coordinate idempotents.

\smallskip
\noindent\emph{Step 1: uniqueness of the diagonal.}
Let $d=\sum_{i,j=1}^n \alpha_{ij}\, e_i\otimes e_j \in A\hat\otimes A$
satisfy $a\cdot d=d\cdot a$ for all $a\in A$.
For $a=(a_1,\dots,a_n)$ we have
\[
a\cdot d - d\cdot a
= \sum_{i,j=1}^n \alpha_{ij}\,(a_i-a_j)\,e_i\otimes e_j,
\]
so $\alpha_{ij}=0$ whenever $i\neq j$. Hence $d=\sum_{i=1}^n \alpha_{ii}\,e_i\otimes e_i$.
The diagonal identity $\pi(d)=1=\sum_{i=1}^n e_i$ forces $\alpha_{ii}=1$,
so the (necessarily unique) diagonal is
\[
d=\sum_{i=1}^n e_i\otimes e_i.
\]

\smallskip
\noindent\emph{Step 2: computation of $\|d\|_\gamma$.}
Since $\|e_i\|_2=1$ for each $i$, we have the crude bound
\(
\|d\|_\gamma \le \sum_{i=1}^n \|e_i\|_2^2 = n.
\)
For the reverse inequality, use the canonical isometric identification
$(A\hat\otimes A)^*\cong \mathcal B(A,A^*)$.
Let $J:A\to A^*$ be the Riesz map $J(x)=\langle \,\cdot\,,x\rangle$ associated
with the $\ell_2$ inner product; then $\|J\|=1$.
Denote by $\phi_J\in (A\hat\otimes A)^*$ the functional corresponding to $J$.
We obtain
\[
\langle d,\phi_J\rangle
= \sum_{i=1}^n \langle e_i,e_i\rangle
= n,
\]
so $\|d\|_\gamma \ge |\langle d,\phi_J\rangle| = n$.
Hence $\|d\|_\gamma=n$, and since $d$ is the unique diagonal, $\AM(A)=\|d\|_\gamma=n$.

Finally, $C_E=\sup_{F\subset\{1,\dots,n\}}\|\chi_F\|_2=\sqrt{n}$, so $n=C_E^2$.
\end{proof}

%======================================================================
\section{James-type $J$-sums of Banach algebras}\label{sec:Jsums}
%======================================================================

The classical James space $J$ \cite{James1960} is the first explicit example of a
non-reflexive Banach space isomorphic to its bidual.
Lindenstrauss \cite{Lindenstrauss1971} showed that James' construction can be
adapted to realise a given separable Banach space as a quotient $Y^{**}/Y$ for a
suitable $Y$ with a shrinking monotone basis.  Bellenot \cite{Bellenot1982}
introduced a flexible framework---the \emph{$J$-sum}---subsuming both James' and
Lindenstrauss' constructions.  A Banach-algebraic variant was studied by Andrew
and Green \cite{AndrewGreen1980} (for $J$ itself) and, for $J$-sums, by Kiran and
Singh \cite{KiranSingh1988}.

In this section we recall the Bellenot--Lindenstrauss definition, record the
basic structural facts needed later, and explain how our results on unconditional
$E$-sums apply \emph{even when such sums appear only as building blocks inside a
conditional $J$-sum}.  

%----------------------------------------------------------------------
\subsection{The Bellenot construction}\label{subsec:Jsum-def}
%----------------------------------------------------------------------

Let $\N_0:=\N\cup\{0\}$, let $\mathcal{P}$ denote the family of all non-empty
finite subsets of $\N_0$, and let $(X_n,\|\cdot\|_n)_{n\in\N_0}$ be a sequence of
Banach spaces with $\dim X_0=0$ (so $X_0=\{0\}$).
Let $\Phi=(\varphi_n)_{n\in\N_0}$ be a sequence of operators
$\varphi_n\colon X_n\to X_{n+1}$ with $\|\varphi_n\|\le 1$ for all $n\in\N_0$.
For $n\le m$ in $\N_0$ set
\[
\varphi_m^n :=
\begin{cases}
\mathrm{Id}_{X_n}, & m=n,\\
\varphi_{m-1}\circ\cdots\circ \varphi_{n+1}\circ \varphi_n, & m>n.
\end{cases}
\]
Write $\Pi:=\prod_{n\in\N_0} X_n$ for the Cartesian product.

\begin{definition}[{\cite[Def.\ 2.1]{Bellenot1982}}]\label{def:J-norm}
For $x=(x_n)_{n\in\N_0}\in\Pi$ and $S=\{p_0<\cdots<p_k\}\in\mathcal{P}$ define
\begin{align}
\sigma(x,S)
 &:= \Big( \sum_{i=1}^k \big\|\varphi_{p_i}^{p_{i-1}}(x_{p_{i-1}})-x_{p_i}\big\|_{p_i}^2 \Big)^{1/2},\label{eq:sigma}\\
\rho(x,S)
 &:= \Big( \sigma(x,S)^2 + \|x_{p_k}\|_{p_k}^2 \Big)^{1/2}.\label{eq:rho}
\end{align}
Then
\begin{equation}\label{eq:Jnorm}
\|x\|_{J}:=\frac{1}{\sqrt{2}}\sup_{S\in\mathcal{P}} \rho(x,S)\in[0,\infty].
\end{equation}
Let $c_{00}(\Phi)\subseteq\Pi$ be the space of finitely non-zero sequences.
The \emph{$J$-sum} $J(\Phi)$ is the completion of $(c_{00}(\Phi),\|\cdot\|_{J})$.
We also set
\[
\widehat{J}(\Phi):=\{x\in\Pi:\ \|x\|_{J}<\infty\}.
\]
\end{definition}

\begin{lemma}\label{lem:J-elementary}
For $x\in \widehat{J}(\Phi)$ and $S\in\mathcal{P}$ one has
\[
\sigma(x,S)\le \rho(x,S)\le \sqrt{2}\,\|x\|_{J}.
\]
Moreover, for every $n\in\N$ we have the coordinate estimate
\begin{equation}\label{eq:coord-contract}
\|x_n\|_n \le \|x\|_{J}.
\end{equation}
\end{lemma}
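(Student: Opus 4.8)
The first two inequalities are purely formal. From \eqref{eq:rho} one has $\rho(x,S)^2=\sigma(x,S)^2+\|x_{p_k}\|_{p_k}^2\ge\sigma(x,S)^2$, so $\sigma(x,S)\le\rho(x,S)$; and for each fixed $S$ the supremum defining $\|x\|_J$ in \eqref{eq:Jnorm} dominates $\tfrac{1}{\sqrt2}\rho(x,S)$, which rearranges to $\rho(x,S)\le\sqrt2\,\|x\|_J$. No choices are involved here.

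For the coordinate estimate \eqref{eq:coord-contract} the plan is to test the supremum in \eqref{eq:Jnorm} against a single cleverly chosen $S\in\mathcal P$. The naive singleton $S=\{n\}$ gives $\rho(x,\{n\})=\|x_n\|_n$ and hence only $\|x_n\|_n\le\sqrt2\,\|x\|_J$; the missing factor $\sqrt2$ is recovered by enlarging $S$ so as to exploit the standing hypothesis $\dim X_0=0$. Concretely, I would take $S=\{0,n\}$, which is a legitimate element of $\mathcal P$ since $n\in\N$ forces $0<n$. Because $X_0=\{0\}$ we have $x_0=0$ and therefore $\varphi_n^0(x_0)=0$, so the single summand in $\sigma$ reads $\sigma(x,S)=\|\varphi_n^0(x_0)-x_n\|_n=\|x_n\|_n$, whence
\[
\rho(x,S)^2=\sigma(x,S)^2+\|x_n\|_n^2=2\,\|x_n\|_n^2.
\]
Feeding this into \eqref{eq:Jnorm} yields $\|x\|_J\ge\tfrac{1}{\sqrt2}\rho(x,S)=\|x_n\|_n$, as desired.

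There is no real obstacle: the only points meriting a line of justification are that $S=\{0,n\}$ indeed belongs to $\mathcal P$ and that $\varphi_n^0$ annihilates $x_0$, the latter being exactly the content of $X_0=\{0\}$. Both halves of the lemma will be used constantly in what follows---the bound $\rho(x,S)\le\sqrt2\,\|x\|_J$ to estimate the telescoping expressions along chains $S$, and \eqref{eq:coord-contract} to see that convergence in $\|\cdot\|_J$ entails coordinatewise convergence, so that $\widehat J(\Phi)$ is a well-defined linear subspace of $\Pi$ into which $J(\Phi)$ embeds.
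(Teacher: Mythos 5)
Your proof is correct and follows exactly the paper's argument: the first two inequalities are read off from \eqref{eq:rho} and \eqref{eq:Jnorm}, and the coordinate estimate is obtained by testing the supremum against $S=\{0,n\}$, using $X_0=\{0\}$ to get $\sigma(x,S)=\|x_n\|_n$ and hence $\rho(x,S)=\sqrt2\,\|x_n\|_n$. Nothing to add.
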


\begin{proof}
The first inequality is immediate from \eqref{eq:rho}.
The second follows from \eqref{eq:Jnorm}.
For \eqref{eq:coord-contract}, fix $n\in\N$ and take $S=\{0,n\}$.
Since $x_0=0$ and $\varphi_n^0(x_0)=0$, \eqref{eq:sigma} gives
$\sigma(x,S)=\|x_n\|_n$, hence $\rho(x,S)=\sqrt{2}\|x_n\|_n$.
Now \eqref{eq:Jnorm} yields $\|x\|_{J}\ge \frac1{\sqrt2}\rho(x,S)=\|x_n\|_n$.
\end{proof}

Bellenot showed that $(X_n)_{n\in\N}$ forms a bimonotone decomposition of
$J(\Phi)$; see, \emph{e.g.}, \cite[Prop.\ 2.1]{GonzalezPello2023} for a clean modern
account.

\begin{proposition}[Bimonotone decomposition {\cite{Bellenot1982}}]\label{prop:bimonotone}
For each $m\in\N$ the map $X_m\to J(\Phi)$ sending $u\in X_m$ to the sequence
$(x_n)$ with $x_m=u$ and $x_n=0$ for $n\neq m$ is an isometric embedding.
With this identification, $(X_n)_{n\in\N}$ is a bimonotone decomposition of
$J(\Phi)$; in particular,
\[
\Big\|\sum_{i=p+1}^{p+q} x_i\Big\|_J \le \Big\|\sum_{i=1}^{p+q+r} x_i\Big\|_J
\qquad
(x\in J(\Phi),\ p,q,r\in\N_0),
\]
and hence $\|x\|_J=\sup_{n\in\N}\big\|\sum_{i=1}^n x_i\big\|_J$ for $x\in J(\Phi)$.
\end{proposition}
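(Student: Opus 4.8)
The plan is to verify directly from the explicit formula \eqref{eq:Jnorm} two things: first, that each coordinate embedding $X_m\hookrightarrow J(\Phi)$ is isometric, and second, that the "interval" seminorms $x\mapsto\bigl\|\sum_{i=p+1}^{p+q}x_i\bigr\|_J$ are monotone under enlarging the interval. For the isometric embedding: given $u\in X_m$, realise it as $x$ with $x_m=u$ and all other coordinates zero. For any $S=\{p_0<\dots<p_k\}\in\mathcal P$, the quantity $\sigma(x,S)$ involves only the terms $\varphi_{p_i}^{p_{i-1}}(x_{p_{i-1}})-x_{p_i}$; since $x$ is supported at the single index $m$, each such term is $\varphi_{p_i}^{p_{i-1}}(u)$ if $p_{i-1}=m$, is $-u$ if $p_i=m$, and is $0$ otherwise. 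Using $\|\varphi_{p_i}^{p_{i-1}}\|\le 1$ one checks that $\rho(x,S)\le\sqrt2\,\|u\|_m$ for every $S$, with equality attained by $S=\{0,m\}$ (as in the proof of Lemma~\ref{lem:J-elementary}, using $x_0=0$). Hence $\|x\|_J=\|u\|_m$, which also reproves \eqref{eq:coord-contract} on the nose.

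For the bimonotonicity inequality, fix $x\in J(\Phi)$ (it suffices to argue on $c_{00}(\Phi)$ and pass to the completion) and indices $p,q,r\in\N_0$. Write $y:=\sum_{i=p+1}^{p+q}x_i$ and $z:=\sum_{i=1}^{p+q+r}x_i$; I want $\|y\|_J\le\|z\|_J$. The key observation is that for any $S=\{p_0<\dots<p_k\}\in\mathcal P$ one has $\rho(y,S)=\rho(y,S')$ where $S':=(S\cap(p,p+q])\cup\{p,p+q\}$ is the "trimmed" set: coordinates of $y$ outside the window $(p,p+q]$ vanish, and—crucially—$\varphi$-images of zero coordinates are zero, so truncating $S$ to this window (adding the endpoints $p$ and $p+q$ if needed, at which $y$ also vanishes) changes none of the summands in $\sigma$ and does not increase the tail term $\|y_{p_k}\|^2$. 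Thus the supremum defining $\|y\|_J$ is attained over sets $S$ with $S\subseteq\{p,p+1,\dots,p+q\}$. For such an $S$, every term $\varphi_{p_i}^{p_{i-1}}(y_{p_{i-1}})-y_{p_i}$ equals $\varphi_{p_i}^{p_{i-1}}(z_{p_{i-1}})-z_{p_i}$, since $y$ and $z$ agree on $\{p+1,\dots,p+q\}$ and both vanish at $p$ (because $p\le p$ means index $p$ is not in the window $(p,p+q]$, and $z$ restricted there... here one must be slightly careful: $z$ does \emph{not} vanish at $p$). So instead I compare $\rho(y,S)$ with $\rho(z,S)$ directly: $\sigma(y,S)=\sigma(z,S)$ fails at the first link $\varphi_{p_1}^{p_0}(\cdot_{p_0})$ unless $p_0>p$. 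This is exactly where bimonotonicity (rather than just monotonicity) is needed, and it is the genuine content of Bellenot's lemma.

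\textbf{The main obstacle} is precisely this: showing $\|y\|_J\le\|z\|_J$ when $y$ is an interior block of $z$, not an initial segment. The clean way is Bellenot's: given $S=\{p_0<\dots<p_k\}$ computing $\rho(y,S)$ (WLOG $S\subseteq\{p,\dots,p+q\}$ with $p_0=p$, $p_k=p+q$ after trimming), build from it a set $\widetilde S$ adapted to $z$ by \emph{replacing the initial link}: since $y_{p}=0$, the first summand of $\sigma(y,S)^2$ is $\|y_{p_1}\|^2$; for $z$ one instead takes $\widetilde S:=(S\setminus\{p\})$ and compares $\|z_{p_1}\|^2$ (the new tail-type first term, available because $p_1$ can serve as a "fresh start") against $\|y_{p_1}\|^2=\|z_{p_1}\|^2$. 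One then checks termwise that $\sigma(y,S)^2+\|y_{p_k}\|^2 \le \sigma(z,\widetilde S)^2 + \|z_{p_k}\|^2$ because the $z$-expression has the \emph{same} interior links and tail and an extra nonnegative contribution from the $\|z_{p_1}\|^2$ term matching $\|y_{p_1}\|^2$; the point is that $\rho$'s definition already "charges" a tail term $\|\cdot_{p_k}\|^2$, so the truncated initial segment of $z$ is itself a legitimate configuration. Rather than reconstruct this in full, I will cite \cite{Bellenot1982} and \cite[Prop.~2.1]{GonzalezPello2023} for the bimonotone decomposition statement, and merely record the two consequences used later: the isometric embedding and the identity $\|x\|_J=\sup_{n}\bigl\|\sum_{i=1}^n x_i\bigr\|_J$, the latter being immediate from bimonotonicity applied with $p=0$ and $r=0$ together with the fact that $\sum_{i=1}^n x_i\to x$ in $J(\Phi)$ for $x\in J(\Phi)$.
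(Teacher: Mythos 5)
Your approach matches the paper's: the proposition is given there purely as a citation to Bellenot (with \cite{GonzalezPello2023} for a modern account), and you likewise defer the bimonotonicity to those references while correctly verifying the isometric-embedding claim by a direct computation of $\rho(x,S)$ for singly supported $x$ (the case analysis over the position of $m$ in $S$ does give $\sup_S\rho=\sqrt2\,\|u\|_m$, attained at $S=\{0,m\}$). One caveat on your explicitly abandoned trimming sketch: $y=\sum_{i=p+1}^{p+q}x_i$ does \emph{not} vanish at the index $p+q$ (only at $p$), so the claimed identity $\rho(y,S)=\rho(y,S')$ with both endpoints adjoined is not quite right as stated --- but since you ultimately cite \cite{Bellenot1982} for this part, exactly as the paper does, nothing is lost.
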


The ambient space $\widehat{J}(\Phi)$ admits a canonical quotient description
in terms of ``eventually coherent'' sequences.

\begin{definition}\label{def:Omega}
Let $\Omega(\Phi)$ be the space of \emph{eventually coherent} sequences
\[
\Omega(\Phi)
:=
\Big\{x=(x_n)\in\Pi:\ \exists n_0\in\N\ \forall m\ge n_0:\ \varphi_m(x_m)=x_{m+1}\Big\}.
\]
For $x\in\Omega(\Phi)$ the limit $\lim_{n\to\infty}\|x_n\|_n$ exists, and
\[
\|x\|_{\Omega}:=\lim_{n\to\infty}\|x_n\|_n
\]
defines a seminorm on $\Omega(\Phi)$.
Let $\widetilde{\Omega}(\Phi)$ denote the completion of
$\Omega(\Phi)/\ker\|\cdot\|_{\Omega}$.
\end{definition}
\medskip
\noindent
We will use below that, in the Banach-algebra setting of Subsection~\ref{subsec:Jsum-algebras},
$\Omega(\Phi)$ is a subalgebra and $\|\cdot\|_{\Omega}$ is submultiplicative; see Proposition~\ref{prop:Omega-algebra}.

\begin{theorem}[Bellenot {\cite[Thm.\ 1.1]{Bellenot1982}}]\label{thm:Bellenot-quot}
\begin{enumerate}
\item $\Omega(\Phi)$ is dense in $\widehat{J}(\Phi)$.
\item The natural map $\Omega(\Phi)\to \widetilde{\Omega}(\Phi)$ extends uniquely
      to a quotient map
      \[
      \Theta\colon \widehat{J}(\Phi)\twoheadrightarrow \widetilde{\Omega}(\Phi)
      \]
      whose kernel is $J(\Phi)$.
\item If each $X_n$ is reflexive, then $J(\Phi)^{**}=\widehat{J}(\Phi)$ isometrically,
      hence $J(\Phi)^{**}/J(\Phi)$ is isometric to $\widetilde{\Omega}(\Phi)$.
\end{enumerate}
\end{theorem}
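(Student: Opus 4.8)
The three parts are of rather different character, so I would treat them in order, invoking \cite{Bellenot1982} (and the streamlined modern account \cite{GonzalezPello2023}) for the two genuinely analytic points and giving the soft arguments directly for the rest. Throughout one uses that $\widehat J(\Phi)$ is itself a Banach space: if $(x^{(k)})_k$ is $\|\cdot\|_J$-Cauchy then by the coordinate estimate of Lemma~\ref{lem:J-elementary} each coordinate converges in $X_n$, and since $\rho(\,\cdot\,,S)$ depends continuously on the (finitely many) relevant coordinates for each fixed $S$, the pointwise limit $x$ satisfies $\|x\|_J<\infty$ and $\|x-x^{(k)}\|_J\to 0$.

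\textbf{Part (1).} The plan is to approximate $x\in\widehat J(\Phi)$ by its \emph{coherent truncations}: for $N\in\N$ let $x^{(N)}$ agree with $x$ on the coordinates $0,\dots,N$ and be extended coherently by $x^{(N)}_m:=\varphi_m^N(x_N)$ for $m>N$, so that $x^{(N)}\in\Omega(\Phi)$ (it is coherent from $N$ onwards). One must show $\|x-x^{(N)}\|_J\to 0$. The defect $d^{(N)}:=x-x^{(N)}$ is supported on the coordinates $>N$, so in $\rho(d^{(N)},S)$ only the portion of $S$ beyond $N$ contributes; the surviving quantity is a tail of the $\ell_2$-variation of $x$, which tends to $0$ because $\|x\|_J<\infty$ already forces $\sum_m\|\varphi_m(x_m)-x_{m+1}\|_{m+1}^2<\infty$ (apply \eqref{eq:Jnorm} to $S=\{0,1,\dots,M\}$). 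Making this bound uniform over all admissible $S$ is the one place requiring a careful estimate, and this is precisely \cite[Thm.~1.1]{Bellenot1982}.

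\textbf{Part (2).} On $\Omega(\Phi)$ the natural map to the completion $\widetilde\Omega(\Phi)$ is norm-decreasing, since $\|x\|_\Omega=\lim_n\|x_n\|_n\le\sup_n\|x_n\|_n\le\|x\|_J$ by Lemma~\ref{lem:J-elementary}; by density (Part (1)) it extends uniquely to a contraction $\Theta\colon\widehat J(\Phi)\to\widetilde\Omega(\Phi)$. As $\Theta$ annihilates $c_{00}(\Phi)$ and is continuous, $J(\Phi)=\overline{c_{00}(\Phi)}\subseteq\ker\Theta$. For the reverse inclusion and the quotient-norm identity I would produce norm-efficient lifts. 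Given $y\in\Omega(\Phi)$, coherent from some $n_0$ onwards, and $N\ge n_0$, set $x^{(N)}:=0$ on the coordinates $<N$ and $x^{(N)}:=y$ on the coordinates $\ge N$. A short computation with \eqref{eq:sigma}--\eqref{eq:Jnorm}---only the single ``jump'' term at the least index $\ge N$ survives, and the contractivity of the $\varphi_m$ makes $(\|y_m\|_m)_{m\ge n_0}$ non-increasing---yields $\|x^{(N)}\|_J=\|y_N\|_N$, which decreases to $\|y\|_\Omega$. Moreover $x^{(N)}\in\Omega(\Phi)$ and $x^{(N)}-y\in c_{00}(\Phi)\subseteq J(\Phi)\subseteq\ker\Theta$, so $\Theta(x^{(N)})=\Theta(y)$. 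Hence for any $x\in\widehat J(\Phi)$ with $\Theta(x)=\Theta(y)$ one has $\mathrm{dist}(x,J(\Phi))\le\|x-y\|_J+\|x^{(N)}\|_J$. Taking $y$ close to a given $x\in\ker\Theta$ (so that $\|y\|_\Omega=\|\Theta(y-x)\|_{\widetilde\Omega}\le\|y-x\|_J$ is small) shows $\ker\Theta=J(\Phi)$; taking $y$ arbitrary shows the quotient norm of $[y]$ in $\widehat J(\Phi)/J(\Phi)$ equals $\|y\|_\Omega$. Since $\widehat J(\Phi)/J(\Phi)$ is complete and $\{[y]:y\in\Omega(\Phi)\}$ is dense in $\widetilde\Omega(\Phi)$, the induced isometry is onto; i.e.\ $\Theta$ is a quotient map with kernel $J(\Phi)$.

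\textbf{Part (3) and the main obstacle.} Assume each $X_n$ is reflexive. The key input---the hard part---is that the bimonotone decomposition $(X_n)_{n\in\N}$ of Proposition~\ref{prop:bimonotone} is then \emph{shrinking}; this is the James-type estimate at the heart of Bellenot's construction (\cite[Thm.~1.1]{Bellenot1982}, \cite{GonzalezPello2023}) and is where the precise $\ell_2$-variation form of $\|\cdot\|_J$ is indispensable, as it forbids $\ell_1$-like block subsequences. Granting shrinkingness, the standard biduality for a shrinking Schauder decomposition identifies $J(\Phi)^{**}$ with the ``boundedly complete'' sequences $\{(x_n)\in\prod_n X_n^{**}:\sup_N\|\sum_{n=1}^N x_n\|<\infty\}$; since $X_n^{**}=X_n$ by reflexivity and $\|x\|_J=\sup_N\|\sum_{n=1}^N x_n\|_J$ for $x\in J(\Phi)$ by Proposition~\ref{prop:bimonotone} (which simultaneously supplies the norm on the bidual), this space is exactly $\widehat J(\Phi)$ with the $J$-norm. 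Combining with Part (2) gives $J(\Phi)^{**}/J(\Phi)=\widehat J(\Phi)/J(\Phi)\cong\widetilde\Omega(\Phi)$ isometrically. The only genuinely non-formal ingredients are the uniform tail estimate behind the density in (1) and---the deeper of the two---the shrinking property of the decomposition in (3), which is the reincarnation in Bellenot's generality of James's original argument that the unit vector basis of $J$ is shrinking; I would cite \cite{Bellenot1982,GonzalezPello2023} for it rather than reprove it.
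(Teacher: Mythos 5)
The paper offers no proof of this theorem at all: it is quoted verbatim from Bellenot's work and used as a black box, so there is nothing in the source to compare against line by line. Your sketch is a correct and sensibly organised outline of that result --- the soft parts (completeness of $\widehat{J}(\Phi)$, the contraction $\Theta$, the norm-efficient lifts $x^{(N)}$ with $\|x^{(N)}\|_J=\|y_N\|_N$, the identification of $\ker\Theta$) check out, and you rightly defer the two genuinely analytic ingredients (the uniform tail estimate behind density and the shrinking property of the decomposition) to \cite{Bellenot1982}, which is exactly what the paper itself does.
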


%----------------------------------------------------------------------
\subsection{$J$-sums as Banach algebras}\label{subsec:Jsum-algebras}
%----------------------------------------------------------------------

We now assume that each $X_n$ is a Banach algebra $(A_n,\|\cdot\|_n)$ and that
each bonding map $\varphi_n\colon A_n\to A_{n+1}$ is a contractive algebra
homomorphism.

\begin{definition}\label{def:Jsum-algebra}
Under the preceding hypotheses, we endow $c_{00}(\Phi)$ with coordinatewise
multiplication
\[
(x_n)\cdot(y_n) := (x_n y_n)\qquad (x,y\in c_{00}(\Phi)).
\]
We call the resulting completion $J(\Phi)$ the \emph{$J$-sum Banach algebra}
associated with $(A_n,\varphi_n)$.
Similarly, $\widehat{J}(\Phi)$ and $\Omega(\Phi)$ become algebras by
coordinatewise multiplication.
\end{definition}

\begin{lemma}[Continuity of multiplication]\label{lem:J-mult}
For $x,y\in c_{00}(\Phi)$ and $S=\{p_0<\cdots<p_k\}\in\mathcal{P}$ one has
\begin{equation}\label{eq:rho-product}
\rho(xy,S)
\le
\|y\|_{\infty}\,\rho(x,S) + \|x\|_{\infty}\,\rho(y,S),
\end{equation}
where $\|x\|_{\infty}:=\sup_{n\in\N}\|x_n\|_n$.
Consequently, coordinatewise multiplication is continuous on $\widehat{J}(\Phi)$ and on $J(\Phi)$, and
\begin{equation}\label{eq:J-submult}
\|xy\|_{J} \le 2\,\|x\|_{J}\,\|y\|_{J}
\qquad (x,y\in J(\Phi)).
\end{equation}
\end{lemma}

\begin{proof}
Fix $S=\{p_0<\cdots<p_k\}$.  Since each $\varphi_{p_i}^{p_{i-1}}$ is a homomorphism,
\[
\varphi_{p_i}^{p_{i-1}}(x_{p_{i-1}}y_{p_{i-1}})-x_{p_i}y_{p_i}
=
\big(\varphi_{p_i}^{p_{i-1}}(x_{p_{i-1}})-x_{p_i}\big)\,\varphi_{p_i}^{p_{i-1}}(y_{p_{i-1}})
+
x_{p_i}\big(\varphi_{p_i}^{p_{i-1}}(y_{p_{i-1}})-y_{p_i}\big).
\]
Taking norms, using $\|\varphi_{p_i}^{p_{i-1}}\|\le 1$ and the submultiplicativity
of $\|\cdot\|_{p_i}$ gives
\[
\big\|\varphi_{p_i}^{p_{i-1}}(x_{p_{i-1}}y_{p_{i-1}})-x_{p_i}y_{p_i}\big\|_{p_i}
\le
\|y\|_{\infty}\,\big\|\varphi_{p_i}^{p_{i-1}}(x_{p_{i-1}})-x_{p_i}\big\|_{p_i}
+
\|x\|_{\infty}\,\big\|\varphi_{p_i}^{p_{i-1}}(y_{p_{i-1}})-y_{p_i}\big\|_{p_i}.
\]
For the terminal term in \eqref{eq:rho}, we also have
\[
\|x_{p_k}y_{p_k}\|_{p_k}
\le \|y\|_\infty\|x_{p_k}\|_{p_k}+\|x\|_\infty\|y_{p_k}\|_{p_k}.
\]
Applying Minkowski's inequality to the $\ell_2$-sum consisting of the $k$ difference terms and this terminal term gives
\eqref{eq:rho-product}.

By Lemma~\ref{lem:J-elementary}, $\|x\|_\infty\le\|x\|_J$, $\|y\|_\infty\le\|y\|_J$, and
$\rho(x,S)\le\sqrt2\|x\|_J$, $\rho(y,S)\le\sqrt2\|y\|_J$ for each $S\in\mathcal P$.
Thus \eqref{eq:rho-product} gives
\[
\rho(xy,S)\le 2\sqrt2\,\|x\|_J\|y\|_J.
\]
Taking the supremum over $S$ and using \eqref{eq:Jnorm} yields
\eqref{eq:J-submult} on $c_{00}(\Phi)$, and hence multiplication extends continuously to $J(\Phi)$.
The same estimate, applied directly to arbitrary $x,y\in\widehat{J}(\Phi)$, shows that
$xy\in\widehat{J}(\Phi)$ and that multiplication is continuous there as well.
\end{proof}

\begin{remark}\label{rem:J-mult-constant}
We do not claim that the constant $2$ in Lemma~\ref{lem:J-mult} is optimal.  It is the
constant obtained directly from the Bellenot normalisation \eqref{eq:Jnorm}, which is kept here
because it makes the coordinate embeddings isometric and is the normalisation used in the
quoted structural results.  Of course, if one replaces $\|\cdot\|_J$ by the equivalent norm
$2\|\cdot\|_J$, the displayed estimate becomes contractive.  Such a rescaling changes only
norm constants, not the algebraic amenability or weak amenability questions considered in this
section.
\end{remark}

\begin{proposition}\label{prop:Omega-algebra}
Assume the hypotheses of Definition~\ref{def:Jsum-algebra}. Then:
\begin{enumerate}[label={\rm(\alph*)}]
\item $\Omega(\Phi)$ is a subalgebra of $\widehat{J}(\Phi)$ for coordinatewise multiplication, and
      $\ker\|\cdot\|_{\Omega}$ is a (two-sided) ideal in $\Omega(\Phi)$.
\item The seminorm $\|\cdot\|_{\Omega}$ is submultiplicative on $\Omega(\Phi)$.
      Consequently, $\widetilde{\Omega}(\Phi)$ is a Banach algebra.
\item The quotient map $\Theta\colon \widehat{J}(\Phi)\twoheadrightarrow \widetilde{\Omega}(\Phi)$
      from Theorem~\ref{thm:Bellenot-quot} is an algebra homomorphism.
\end{enumerate}
In particular, one has a short exact sequence of Banach algebras
\[
0\longrightarrow J(\Phi)\longrightarrow \widehat{J}(\Phi)
\overset{\Theta}{\longrightarrow}\widetilde{\Omega}(\Phi)\longrightarrow 0.
\]
\end{proposition}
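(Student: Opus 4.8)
The plan is to route everything through two elementary observations: each bonding map $\varphi_n$ is a contractive algebra homomorphism, and every $x\in\widehat{J}(\Phi)$ obeys the coordinate bound $\|x_n\|_n\le\|x\|_J$ of \eqref{eq:coord-contract}. The first makes the coherence condition stable under coordinatewise products; the second keeps such products inside $\widehat{J}(\Phi)$ and controls their $\Omega$-seminorm. No new estimates beyond Lemmas~\ref{lem:J-elementary} and~\ref{lem:J-mult} should be needed.

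For part (a), I would first note that $\widehat{J}(\Phi)$ is closed under coordinatewise multiplication: the pointwise identity in the proof of Lemma~\ref{lem:J-mult} and the resulting estimate \eqref{eq:sigma-product} are valid for all $x,y\in\widehat{J}(\Phi)$, and the subsequent bound $\|xy\|_J\le\frac{3}{\sqrt2}\|x\|_J\|y\|_J$ uses only $\|x\|_\infty\le\|x\|_J$ and $\|y\|_\infty\le\|y\|_J$, which hold throughout $\widehat{J}(\Phi)$ by Lemma~\ref{lem:J-elementary}. Next, if $x$ is eventually coherent from $n_0$ and $y$ from $n_1$, then for every $m\ge\max\{n_0,n_1\}$ the homomorphism property gives $\varphi_m(x_my_m)=\varphi_m(x_m)\varphi_m(y_m)=x_{m+1}y_{m+1}$, so $xy$ is eventually coherent; hence $\Omega(\Phi)$ is a subalgebra of $\widehat{J}(\Phi)$. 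For the ideal claim, if $\|x\|_\Omega=0$ and $y\in\Omega(\Phi)$, then $\|x_ny_n\|_n\le\|x_n\|_n\|y_n\|_n\le\|x_n\|_n\,\|y\|_J\to0$, and symmetrically for $y_nx_n$, so $\ker\|\cdot\|_\Omega$ is a two-sided ideal.

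For part (b), recall that for $x\in\Omega(\Phi)$ the sequence $(\|x_n\|_n)_n$ is eventually non-increasing, since $\|x_{m+1}\|_{m+1}=\|\varphi_m(x_m)\|_{m+1}\le\|x_m\|_m$ for large $m$; hence $\|x\|_\Omega=\lim_n\|x_n\|_n$ exists (as recorded in Definition~\ref{def:Omega}), likewise for $y$ and for $xy$, and passing to the limit in $\|x_ny_n\|_n\le\|x_n\|_n\|y_n\|_n$ gives $\|xy\|_\Omega\le\|x\|_\Omega\|y\|_\Omega$. Since $\ker\|\cdot\|_\Omega$ is a two-sided ideal by (a), the quotient $\Omega(\Phi)/\ker\|\cdot\|_\Omega$ is a normed algebra whose norm is exactly the one induced by $\|\cdot\|_\Omega$ and is therefore submultiplicative, so its completion $\widetilde{\Omega}(\Phi)$ is a Banach algebra. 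For part (c), on $\Omega(\Phi)$ the map $\Theta$ is the quotient-then-include map, hence an algebra homomorphism; since $\Omega(\Phi)$ is dense in $\widehat{J}(\Phi)$ by Theorem~\ref{thm:Bellenot-quot}(1), $\Theta$ is continuous, and multiplication is jointly continuous on both $\widehat{J}(\Phi)$ and $\widetilde{\Omega}(\Phi)$, so the identity $\Theta(xy)=\Theta(x)\Theta(y)$ extends from $\Omega(\Phi)$ to all of $\widehat{J}(\Phi)$ by a routine approximation argument. The short exact sequence then follows: the inclusion $J(\Phi)\hookrightarrow\widehat{J}(\Phi)$ is an algebra homomorphism (same coordinatewise product), $\Theta$ is a surjective algebra homomorphism with kernel $J(\Phi)$ by Theorem~\ref{thm:Bellenot-quot}(2), and as the kernel of a continuous homomorphism $J(\Phi)$ is automatically a closed two-sided ideal.

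I do not anticipate a real obstacle here; the only point requiring a moment's care is confirming that the multiplicativity estimate of Lemma~\ref{lem:J-mult} genuinely holds on all of $\widehat{J}(\Phi)$ and not merely on $J(\Phi)$, so that $\widehat{J}(\Phi)$ is a Banach algebra and $\Theta$ admits a continuous extension. As noted above this is immediate, since that estimate is proved pointwise in $S$ and uses only the coordinate contraction $\|\cdot\|_n\le\|\cdot\|_J$, which is available throughout $\widehat{J}(\Phi)$.
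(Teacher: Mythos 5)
Your proposal is correct and follows essentially the same route as the paper: stability of eventual coherence under products via the homomorphism property of the bonding maps, submultiplicativity of $\|\cdot\|_{\Omega}$ by passing to the limit in the coordinatewise inequality, and extension of $\Theta$ by density and continuity. The extra care you take in checking that the estimate of Lemma~\ref{lem:J-mult} holds on all of $\widehat{J}(\Phi)$ is a detail the paper already records in that lemma's statement, so nothing genuinely new is added.
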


\begin{proof}
(a) Let $x,y\in\Omega(\Phi)$. Choose $n_0$ such that $\varphi_m(x_m)=x_{m+1}$ and $\varphi_m(y_m)=y_{m+1}$
for all $m\ge n_0$. Since each $\varphi_m$ is an algebra homomorphism,
\[
\varphi_m(x_my_m)=\varphi_m(x_m)\varphi_m(y_m)=x_{m+1}y_{m+1}\qquad (m\ge n_0),
\]
so $xy\in\Omega(\Phi)$. If $x\in\ker\|\cdot\|_{\Omega}$ and $y\in\Omega(\Phi)$, then
$\|x_n\|_n\to 0$ and $\|y_n\|_n\to \|y\|_{\Omega}<\infty$, hence
$\|x_ny_n\|_n\le \|x_n\|_n\|y_n\|_n\to 0$, and similarly $\|y_nx_n\|_n\to 0$, so $\ker\|\cdot\|_{\Omega}$
is a two-sided ideal.

(b) For $x,y\in\Omega(\Phi)$, the limits $\|x\|_\Omega=\lim_n\|x_n\|_n$ and $\|y\|_\Omega=\lim_n\|y_n\|_n$ exist.
By coordinatewise submultiplicativity, $\|x_ny_n\|_n\le \|x_n\|_n\|y_n\|_n$ for all $n$, and taking limits gives
\[
\|xy\|_\Omega=\lim_n\|x_ny_n\|_n \le \Big(\lim_n\|x_n\|_n\Big)\Big(\lim_n\|y_n\|_n\Big)=\|x\|_\Omega\|y\|_\Omega.
\]
Thus $\|\cdot\|_\Omega$ is submultiplicative, so the induced norm on $\Omega(\Phi)/\ker\|\cdot\|_\Omega$
and its completion $\widetilde{\Omega}(\Phi)$ are submultiplicative.

(c) The natural map $\Omega(\Phi)\to\widetilde{\Omega}(\Phi)$ is multiplicative by (a)--(b), hence extends by continuity
to an algebra homomorphism $\Theta$ on $\widehat{J}(\Phi)$ via Theorem~\ref{thm:Bellenot-quot}.
\end{proof}

A key feature for applications is that \emph{each coordinate algebra is a
quotient of the $J$-sum}.

\begin{proposition}\label{prop:coord-quot}
For every $n\in\N$ the coordinate projection
\[
\pi_n\colon J(\Phi)\to A_n,\qquad \pi_n((x_k)_{k\in\N_0})=x_n,
\]
is a contractive surjective homomorphism.
\end{proposition}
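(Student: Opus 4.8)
The plan is to verify the three asserted properties in the order \emph{well-definedness with contractivity}, \emph{multiplicativity}, \emph{surjectivity}, reducing everything to the dense subspace $c_{00}(\Phi)$ and then invoking continuity.

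First I would note that $\pi_n$ is manifestly a well-defined linear map on $c_{00}(\Phi)$, and that the coordinate estimate \eqref{eq:coord-contract} of Lemma~\ref{lem:J-elementary} gives $\|\pi_n(x)\|_n=\|x_n\|_n\le\|x\|_{J}$ for every $x\in c_{00}(\Phi)$ (indeed for every $x\in\widehat J(\Phi)$). Since $c_{00}(\Phi)$ is dense in $J(\Phi)$ by definition, $\pi_n$ extends uniquely to a norm-decreasing linear map $\pi_n\colon J(\Phi)\to A_n$; the extension still sends $x=(x_k)$ to its $n$-th coordinate $x_n$, because coordinate evaluation is continuous in the $J$-norm by the very same estimate applied to $J$-Cauchy sequences.

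Next, multiplicativity. On $c_{00}(\Phi)$ the product is coordinatewise (Definition~\ref{def:Jsum-algebra}), so $\pi_n(xy)=(xy)_n=x_n y_n=\pi_n(x)\pi_n(y)$ for $x,y\in c_{00}(\Phi)$. By Lemma~\ref{lem:J-mult} multiplication on $J(\Phi)$ is jointly continuous, and $\pi_n$ is continuous by the previous step, so both sides of the identity $\pi_n(xy)=\pi_n(x)\pi_n(y)$ are continuous functions of $(x,y)\in J(\Phi)\times J(\Phi)$ that agree on the dense set $c_{00}(\Phi)\times c_{00}(\Phi)$; hence they agree everywhere, and $\pi_n$ is an algebra homomorphism.

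Finally, surjectivity. Given $u\in A_n$, let $\iota_n(u)\in c_{00}(\Phi)\subseteq J(\Phi)$ be the sequence with $n$-th coordinate $u$ and all other coordinates $0$. By the bimonotone decomposition (Proposition~\ref{prop:bimonotone}), $\iota_n\colon A_n\to J(\Phi)$ is an isometric embedding, and plainly $\pi_n(\iota_n(u))=u$; thus $\pi_n$ is surjective, and in fact admits the isometric homomorphic section $\iota_n$. I do not expect a genuine obstacle here: the only points requiring a moment's care are that coordinate evaluation survives the completion — exactly what \eqref{eq:coord-contract} guarantees — and that joint continuity of multiplication, available from Lemma~\ref{lem:J-mult}, legitimises the density argument for the homomorphism property. (This proposition is what later lets us apply Lemma~\ref{lem:quotient} to bound $\AM(A_n)$ by $\AM(J(\Phi))$.)
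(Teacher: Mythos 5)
Your proof is correct and follows the same route as the paper's: contractivity from the coordinate estimate \eqref{eq:coord-contract}, surjectivity from the isometric embedding of Proposition~\ref{prop:bimonotone}, and multiplicativity from the coordinatewise definition of the product. You merely spell out the density/continuity arguments that the paper's terse proof leaves implicit.
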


\begin{proof}
Surjectivity follows because $A_n$ is isometrically embedded in $J(\Phi)$ as the
subspace of sequences supported at $n$ (Proposition~\ref{prop:bimonotone}).
Contractivity is \eqref{eq:coord-contract}.  The homomorphism property is
coordinatewise.
\end{proof}

\begin{corollary}[Coordinate obstruction to amenability]\label{cor:amenable-coordinates}
If $J(\Phi)$ is amenable, then each $A_n$ is amenable and
\[
\sup_{n\in\N}\mathrm{AM}(A_n)\le \mathrm{AM}(J(\Phi)),
\]
where $\mathrm{AM}(\cdot)$ denotes the amenability constant.  Equivalently, if some
coordinate algebra $A_n$ is not amenable, or if the numbers $\mathrm{AM}(A_n)$ are
unbounded, then $J(\Phi)$ is not amenable.
\end{corollary}

\begin{proof}
Amenability passes to quotients, and the estimate for amenability constants under
quotients gives $\mathrm{AM}(A_n)\le \|\pi_n\|^2\mathrm{AM}(J(\Phi))$.
Now use Proposition~\ref{prop:coord-quot} and take the supremum over $n$.
\end{proof}

%----------------------------------------------------------------------
\subsection{Unconditional quotient obstructions for $J$-sums}\label{subsec:Esums-inside-J}
%----------------------------------------------------------------------

The preceding corollary is only a necessary condition.  We are not aware of an
infinite non-zero Bellenot $J$-sum Banach algebra that is amenable, and the results
below should be read as obstructions rather than as a supply of examples.  The
point is that a conditional $J$-sum has contractive coordinate quotients, so any
unconditional structure present in such a quotient must satisfy the criteria from
Sections~\ref{sec:main} and~\ref{sec:applications}.

\begin{corollary}\label{cor:Esum-detected}
Assume that, for some $n\in\N$, the Banach algebra $A_n$ is isomorphic as a Banach algebra to an
$E$-sum
\[
A_n \cong \Big(\bigoplus_{i\in I} B_i\Big)_{E}
\]
for a (possibly uncountable) index set $I$, where $E$ is a Banach sequence lattice
on~$I$ in the sense of Definition~\ref{def:sequence-lattice}.  If $J(\Phi)$ is amenable, then the above
$E$-sum is amenable.  Consequently:
\begin{enumerate}[label={\rm(\alph*)}]
\item if there exists an infinite subset $J\subseteq I$ such that each $B_i$ ($i\in J$) is
      non-zero and
      \[
      \sup\bigl\{\|\chi_F\|_E:\ F\in\cF(I),\ F\subseteq J\bigr\}=+\infty,
      \]
      for example $E=\ell_p(I)$ with $1\le p<\infty$ and $J$ infinite, then $J(\Phi)$ is not amenable;
\item amenability of $J(\Phi)$ forces
      $\sup_{i\in I}\mathrm{AM}(B_i)<\infty$.
\end{enumerate}
\end{corollary}

\begin{proof}
By Proposition~\ref{prop:coord-quot}, $A_n$ is a contractive quotient of
$J(\Phi)$.  Hence, if $J(\Phi)$ is amenable then $A_n$ is amenable; by the assumed
isomorphism, so is the displayed $E$-sum.  Assertion (a) follows from
Proposition~\ref{prop:CE-necessary} (or Corollary~\ref{cor:lp-not-amenable} in the
case $E=\ell_p(I)$).  Assertion (b) follows from Theorem~\ref{thm:main}(1), applied
to the displayed $E$-sum.
\end{proof}

\begin{remark}\label{rem:Piszczek}
Piszczek obtained sharp criteria for amenability and contractibility of certain
K\"othe co-echelon (DF) sequence algebras; see \cite{Piszczek2019} and the
surveying discussion in \cite{PirkovskiiPiszczek2022}.  Although the settings are
different (locally convex DF-algebras versus Banach algebras), the mechanism in
Corollary~\ref{cor:Esum-detected} is analogous in spirit: amenability imposes strong
uniformity constraints on the underlying sequence structure, and these constraints
can be detected after passing to an appropriate coordinate quotient of a conditional
$J$-sum.
\end{remark}

%====================== Weak amenability ======================%
\section{Weak amenability of unconditional $E$-sums}\label{sec:weak-amenability}

This section records a weak-amenability counterpart of the $\ell_p(A)$/$c_0(A)$ result
highlighted in \cite{KoczorowskiPiszczek2024WeakAmenability}.
We work throughout with arbitrary index sets and unconditional $E$-sums.

%---------------------- Definitions ----------------------%
\begin{definition}[Weak amenability]\label{def:WA}
Let $A$ be a Banach algebra.
A bounded linear map $D\colon A\to A^*$ is a \emph{derivation} if
\[
D(ab)=a\cdot D(b)+D(a)\cdot b\qquad (a,b\in A),
\]
where the canonical $A$-bimodule structure on $A^*$ is given by
\[
(a\cdot\phi)(x):=\phi(xa),\qquad (\phi\cdot a)(x):=\phi(ax)
\qquad (a,x\in A,\ \phi\in A^*).
\]
For $\phi\in A^*$ the \emph{inner derivation} implemented by $\phi$ is
\[
\ad_\phi(a):=a\cdot\phi-\phi\cdot a\qquad (a\in A).
\]
The Banach algebra $A$ is \emph{weakly amenable} if every bounded derivation $D\colon A\to A^*$ is inner.
\end{definition}

\begin{remark}\label{rem:WA-commutative}
If $A$ is commutative, then $a\cdot\phi=\phi\cdot a$ for all $a\in A$ and $\phi\in A^*$, hence $\ad_\phi=0$ for all $\phi$.
Therefore, for commutative $A$, weak amenability is equivalent to the vanishing of all bounded derivations $A\to A^*$.
\end{remark}

\begin{definition}[Weak amenability constant]\label{def:WAM}
Let $A$ be a Banach algebra. Define
\[
\begin{split}
\WAM(A):=\inf\Bigl\{C\ge 0:\ &\text{for every bounded derivation }D\colon A\to A^*\\
&\text{there exists }\phi\in A^*\text{ such that }D=\ad_\phi
\text{ and }\|\phi\|\le C\|D\|\Bigr\}\in[0,\infty].
\end{split}
\]
with $\WAM(A)=+\infty$ if $A$ is not weakly amenable.
\end{definition}

%---------------------- A key structural lemma ----------------------%
Let us record the following folk lemma. We provide the proof for the sake of completeness.
\begin{lemma}\label{lem:WA-essential}
If $A$ is weakly amenable, then $\overline{A^2}=A$, where $A^2:=\mathrm{span}\{ab:\ a,b\in A\}$.
\end{lemma}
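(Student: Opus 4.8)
The statement is the contrapositive of the claim that a proper closed ideal containing $\overline{A^2}$ obstructs weak amenability. The plan is to argue by contradiction: suppose $K:=\overline{A^2}$ is a \emph{proper} closed subspace of $A$ (it is automatically a closed two-sided ideal, being the closed linear span of all products $ab$). Then the quotient $A/K$ is a nonzero Banach space on which $A$ acts trivially from both sides, since $a\cdot b\in A^2\subseteq K$ for all $a,b\in A$, so the induced bimodule structure on $A/K$ is the zero action. Pick any nonzero bounded linear functional $\psi$ on $A/K$ and let $\phi\in A^*$ be its pullback along the quotient map $A\to A/K$; thus $\phi\neq 0$ but $\phi$ annihilates $K\supseteq A^2$.

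First I would verify that $D:=\ad_\phi$ is \emph{not} the zero derivation in a way that produces a genuine nonzero bounded derivation which cannot be inner, or — more efficiently — I would instead build a nonzero bounded derivation directly from $\phi$ and show it is \emph{not} inner. The cleaner route: since $\phi$ vanishes on $A^2$, the formula $D(a):=\phi(a)\,\phi$ — or more robustly, picking a second functional — needs care; the standard trick is as follows. Because $A^2$ is not dense, there is a nonzero $\phi\in A^*$ with $\phi|_{A^2}=0$, i.e.\ $\phi(ab)=0$ for all $a,b\in A$. Then for \emph{any} $\psi\in A^*$ the map $D:=\psi\otimes\phi$ given by $D(a):=\psi(a)\phi$ is a bounded linear map $A\to A^*$, and it is a derivation: indeed $a\cdot D(b)+D(a)\cdot b$ evaluated at $x\in A$ equals $\psi(b)\phi(xa)+\psi(a)\phi(bx)=0$ (both terms vanish since $\phi$ kills products), while $D(ab)(x)=\psi(ab)\phi(x)$, so one must in fact also choose $\psi$ vanishing on $A^2$ to make $D$ a derivation — take $\psi=\phi$, giving $D(a)=\phi(a)\phi$, which is a bounded derivation because $D(ab)=\phi(ab)\phi=0=a\cdot D(b)+D(a)\cdot b$.

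The remaining and main step is to show this $D$ is \emph{not} inner, which forces $A$ to fail weak amenability, contradicting the hypothesis and completing the proof. Suppose $D=\ad_{\chi}$ for some $\chi\in A^*$. Then $\phi(a)\phi=a\cdot\chi-\chi\cdot a$ for all $a\in A$; evaluating at $x\in A$ gives $\phi(a)\phi(x)=\chi(xa)-\chi(ax)$. Now choose $a$ with $\phi(a)\neq 0$ (possible since $\phi\neq 0$) and replace $a$ by $a^2$: the left side becomes $\phi(a^2)\phi(x)=0$ since $\phi$ vanishes on $A^2$, so $\chi(xa^2)=\chi(a^2x)$ for all $x$; but that alone is not an immediate contradiction. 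The cleaner finish is to evaluate the identity $\phi(a)\phi(x)=\chi(xa)-\chi(ax)$ at $x=a$: the right side is $\chi(a^2)-\chi(a^2)=0$, so $\phi(a)^2=0$, forcing $\phi(a)=0$ for every $a\in A$, i.e.\ $\phi=0$, a contradiction. Hence $D$ is a nonzero non-inner bounded derivation, so $A$ is not weakly amenable. The only subtlety to watch — the part I expect to be mildly delicate to phrase correctly — is confirming at the outset that $\overline{A^2}$ is indeed a two-sided ideal (so that the zero-action quotient argument is legitimate) and that $D(a)=\phi(a)\phi$ genuinely satisfies the Leibniz rule given only $\phi|_{A^2}=0$; both are routine once spelled out.
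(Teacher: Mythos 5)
Your proposal is correct and, once the exploratory detours are stripped away, it is essentially the paper's proof: both take a Hahn--Banach functional $\phi$ annihilating $\overline{A^2}$, form the bounded derivation $D(a)=\phi(a)\phi$ (Leibniz holding because $\phi$ kills all products, so both $D(ab)$ and the module terms vanish), and rule out innerness by evaluating $D(a)$ at $a$ itself, since $\ad_\chi(a)(a)=\chi(a^2)-\chi(a^2)=0$ while $D(a)(a)=\phi(a)^2\neq 0$ for suitable $a$.
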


\begin{proof}
Assume $\overline{A^2}\neq A$. Then by Hahn--Banach there exists $0\neq\psi\in A^*$ with $\psi|_{\overline{A^2}}=0$.
For each $a\in A$, define $D(a):=\psi(a)\,\psi\in A^*$.
Since $\psi(ab)=0$ for all $a,b\in A$, we have $D(ab)=0$.
Moreover, for every $c\in A$,
\[
(a\cdot\psi)(c)=\psi(ca)=0,\qquad (\psi\cdot a)(c)=\psi(ac)=0,
\]
so $a\cdot\psi=\psi\cdot a=0$ for all $a\in A$, and hence
\[
a\cdot D(b)+D(a)\cdot b=\psi(b)\,a\cdot\psi+\psi(a)\,\psi\cdot b=0.
\]
Thus $D$ is a bounded derivation $A\to A^*$.

If $D$ were inner, then for each $a\in A$ we would have $D(a)(a)=0$ because
$\ad_\phi(a)(a)=\phi(aa-aa)=0$.  But choosing $a$ with $\psi(a)\neq 0$ gives
$D(a)(a)=\psi(a)^2\neq 0$, a contradiction. Hence $A$ is not weakly amenable.
\end{proof}

%---------------------- Coordinate ideals in $E$-sums ----------------------%
For a Banach sequence lattice $E$ on $I$ and a family $(A_i)_{i\in I}$ of Banach algebras, set
\[
A:=\Bigl(\bigoplus_{i\in I}A_i\Bigr)_{\!E}.
\]
For $i\in I$ we write $\iota_i\colon A_i\to A$ for the coordinate embedding and $\pi_i\colon A\to A_i$ for the coordinate projection.
Note that $\pi_i$ is contractive (by Definition~\ref{def:sequence-lattice}(c)), while $\|\iota_i\|=\|\delta_i\|_E$ in general.

For $F\in\cF(I)$ denote by $A_F$ the closed subalgebra of $A$ supported in $F$ and by $P_F\colon A\to A_F$ the truncation map.

\begin{proposition}\label{prop:WA-to-summands}
If $A=(\oplus_{i\in I}A_i)_E$ is weakly amenable, then each $A_i$ is weakly amenable.

More quantitatively, if $\WAM(A)<\infty$, then for each $i\in I$,
\[
\WAM(A_i)\le \|\iota_i\|\,\WAM(A)=\|\delta_i\|_E\,\WAM(A).
\]
\end{proposition}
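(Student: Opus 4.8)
The goal is to show that weak amenability of the $E$-sum $A$ transfers to each coordinate algebra $A_i$, with the stated norm bound. The natural strategy is to take a bounded derivation $D_i\colon A_i\to A_i^*$ and \emph{pull it back} to a derivation on $A$, then use weak amenability of $A$ to see that this pullback is inner, and finally \emph{push} the implementing functional back down to $A_i$. First I would define the pullback: given $D_i\colon A_i\to A_i^*$, set $\widetilde D:=\iota_i^*\circ D_i\circ\pi_i\colon A\to A^*$, where $\iota_i^*\colon A_i^*\to A^*$ is the adjoint of the coordinate embedding. Because $\pi_i$ is a homomorphism and the coordinate algebra $A_i$ sits inside $A$ as a (non-closed-ideal but) subalgebra on which multiplication is concentrated, I would check that $\widetilde D$ is again a bounded derivation; the key algebraic point is that for $a,b\in A$ one has $\pi_i(ab)=\pi_i(a)\pi_i(b)$, and the bimodule actions are compatible under $\iota_i^*$ in the sense that $\iota_i^*(\pi_i(a)\cdot\phi)=a\cdot\iota_i^*(\phi)$ and likewise on the right, for $\phi\in A_i^*$. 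This compatibility is exactly what one needs for the Leibniz rule to survive the sandwiching. The norm estimate $\|\widetilde D\|\le\|\iota_i^*\|\,\|D_i\|\,\|\pi_i\|=\|\iota_i\|\,\|D_i\|$ is immediate from $\|\pi_i\|\le1$ and $\|\iota_i^*\|=\|\iota_i\|=\|\delta_i\|_E$.

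Next, since $A$ is weakly amenable with constant $\WAM(A)$, there exists $\Phi\in A^*$ with $\widetilde D=\ad_\Phi$ and $\|\Phi\|\le\WAM(A)\,\|\widetilde D\|\le\|\delta_i\|_E\,\WAM(A)\,\|D_i\|$. I would then restrict $\Phi$ to recover a functional on $A_i$: set $\phi:=\iota_i^*(\Phi)\in A_i^*$ (equivalently, $\phi=\Phi\circ\iota_i$ up to the identification), so $\|\phi\|\le\|\iota_i\|\,\|\Phi\|$ — but here I should be slightly careful, because naively this would introduce an extra factor of $\|\delta_i\|_E$. The cleaner route is to compose the other way: apply $\pi_i^*$ or, better, evaluate. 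For $a\in A_i$, I want $D_i(a)=\ad_\phi(a)$ in $A_i^*$, i.e. $D_i(a)(c)=\phi(ca)-\phi(ac)$ for all $c\in A_i$. Using that $\iota_i$ is multiplicative and $\pi_i\iota_i=\id_{A_i}$, one has $\widetilde D(\iota_i(a))=\iota_i^*(D_i(a))$, and $\ad_\Phi(\iota_i(a))$ evaluated against $\iota_i(c)$ equals $\Phi(\iota_i(c)\iota_i(a))-\Phi(\iota_i(a)\iota_i(c))=\Phi(\iota_i(ca))-\Phi(\iota_i(ac))$. Setting $\phi(a):=\Phi(\iota_i(a))$ gives $D_i(a)(c)=\phi(ca)-\phi(ac)=\ad_\phi(a)(c)$, so $D_i=\ad_\phi$ with $\|\phi\|\le\|\iota_i\|\,\|\Phi\|$. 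Chaining the estimates, $\|\phi\|\le\|\delta_i\|_E\cdot\|\delta_i\|_E\,\WAM(A)\,\|D_i\|$, which seems to give $\|\delta_i\|_E^2$ rather than $\|\delta_i\|_E$. To land on the claimed single power, I would instead pull back $D_i$ \emph{pre}-composed only by $\pi_i$ on one side and observe that the implementing functional for $\widetilde D$ can be taken in the image of $\pi_i^*$; alternatively, one uses that $\iota_i^*\colon A_i^*\to A^*$ followed by the restriction back is the identity on $A_i^*$ (since $\pi_i\iota_i=\id$), so the relevant composite loses no norm. Concretely: $\widetilde D=\ad_\Phi$ forces, upon restricting to $A_i$, the identity $D_i=\ad_{\pi_i^{**}\Phi\,|_{A_i}}$ — no, the honest statement is that $D_i(a)=\iota_i^*\ad_\Phi\iota_i(a)$, and $\iota_i^*\ad_\Phi\iota_i=\ad_{\phi}$ with $\phi=\iota_i^*\Phi$ because $\pi_i\iota_i=\id$ makes the bimodule actions collapse correctly; the single factor $\|\iota_i\|$ then comes from $\|\phi\|=\|\iota_i^*\Phi\|\le\|\iota_i\|\,\|\Phi\|$ together with $\|\widetilde D\|\le\|D_i\|$ (the embedding $\iota_i^*$ is contractive when we factor it as the adjoint of the \emph{contractive} map $\pi_i$, not of $\iota_i$).

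The main obstacle is precisely this bookkeeping of norms through the two adjoints: one must arrange the sandwich so that the non-contractive map $\iota_i$ (with norm $\|\delta_i\|_E\ge1$) appears exactly once, while the contractive $\pi_i$ absorbs the other side. The correct framing is to write $\widetilde D:=\pi_i^*\circ D_i\circ\pi_i$ — \emph{using $\pi_i^*$, the adjoint of the contractive projection}, which has norm $\le1$ — so that $\|\widetilde D\|\le\|D_i\|$; then weak amenability gives $\Phi$ with $\|\Phi\|\le\WAM(A)\|D_i\|$, and restricting via $\iota_i$ (i.e. setting $\phi:=\Phi\circ\iota_i$, which costs the single factor $\|\iota_i\|=\|\delta_i\|_E$) yields $D_i=\ad_\phi$ because $\pi_i\iota_i=\id_{A_i}$ and the bimodule-action identities $\iota_i(a)\cdot\pi_i^*\psi=\pi_i^*(a\cdot\psi)$ hold for $a\in A_i$, $\psi\in A_i^*$. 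Verifying that $\widetilde D=\pi_i^*D_i\pi_i$ is a genuine derivation (the Leibniz rule, using that $\pi_i$ is an algebra homomorphism and the dual-module compatibility of $\pi_i^*$) and that the restriction identity $\iota_i^*\widetilde D\iota_i=D_i$ holds are the two routine-but-essential checks. Once these are in place the norm estimate reads $\|\phi\|\le\|\iota_i\|\,\|\Phi\|\le\|\delta_i\|_E\,\WAM(A)\,\|D_i\|$, giving $\WAM(A_i)\le\|\delta_i\|_E\,\WAM(A)$ as claimed; the unquantified statement that each $A_i$ is weakly amenable is then immediate.
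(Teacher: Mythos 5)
Your final argument --- defining $\widetilde D:=\pi_i^*\circ D_i\circ\pi_i$ using the adjoint of the \emph{contractive} projection so that $\|\widetilde D\|\le\|D_i\|$, invoking weak amenability of $A$ to obtain $\Phi$, and setting $\phi:=\Phi\circ\iota_i$ so that $\pi_i\iota_i=\id_{A_i}$ forces $D_i=\ad_\phi$ with $\|\phi\|\le\|\delta_i\|_E\,\WAM(A)\,\|D_i\|$ --- is exactly the paper's proof. The only blemish is the opening paragraph's mislabelling of the adjoints (the adjoint of $\iota_i$ maps $A^*\to A_i^*$, not $A_i^*\to A^*$), a slip you correctly diagnose and repair before reaching the conclusion.
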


\begin{proof}
Fix $i\in I$ and let $d\colon A_i\to A_i^*$ be a bounded derivation.
Define $D\colon A\to A^*$ by
\[
D:=\pi_i^*\circ d\circ \pi_i,
\]
where $\pi_i^*\colon A_i^*\hookrightarrow A^*$ is the adjoint of $\pi_i$.
Since $\pi_i$ is a homomorphism, $\pi_i^*$ is a bimodule map, hence $D$ is a derivation; moreover
$\|D\|\le \|d\|$ because $\pi_i$ is contractive.
If $A$ is weakly amenable then $D=\ad_\Phi$ for some $\Phi\in A^*$.
Set $\phi:=\iota_i^*(\Phi)\in A_i^*$, i.e.\ $\phi(a)=\Phi(\iota_i(a))$.
A routine compatibility check yields $d=\ad_\phi$.
Thus $A_i$ is weakly amenable.

For the quantitative estimate, take any $C>\WAM(A)$ and choose $\Phi$ with
$D=\ad_\Phi$ and $\|\Phi\|\le C\|D\|$.  Then
\[
\|\phi\|\le \|\iota_i\|\,\|\Phi\|\le \|\iota_i\|\,C\,\|d\|.
\]
Taking the infimum over all such $C$ gives
$\WAM(A_i)\le \|\iota_i\|\WAM(A)=\|\delta_i\|_E\WAM(A)$.
\end{proof}

\begin{lemma}[Off-diagonal matrix coefficients]\label{lem:off-diagonal}
Let $A=(\oplus_{i\in I}A_i)_E$ and let $D\colon A\to A^*$ be a bounded derivation.
For $j,k\in I$ define
\[
D_{jk}:=\iota_j^*\circ D\circ\iota_k\colon A_k\longrightarrow A_j^*.
\]
If $j\neq k$ and $\overline{A_j^2}=A_j$, then $D_{jk}=0$.  Consequently, if every $A_j$ is
weakly amenable, then for each $k\in I$ and $a\in A_k$ the functional
$D(\iota_k(a))$ annihilates every coordinate ideal $\iota_j(A_j)$ with $j\neq k$.
\end{lemma}

\begin{proof}
Fix $j\neq k$, $a\in A_k$, and $b,c\in A_j$.  Since
$\iota_k(a)\iota_j(b)=0$, the derivation identity gives
\[
0=D(\iota_k(a)\iota_j(b))
  =\iota_k(a)\cdot D(\iota_j(b))+D(\iota_k(a))\cdot \iota_j(b).
\]
Evaluating at $\iota_j(c)$, the first term is zero because
$\iota_j(c)\iota_k(a)=0$, while the second term is
\[
\bigl(D(\iota_k(a))\cdot\iota_j(b)\bigr)(\iota_j(c))
   =D(\iota_k(a))(\iota_j(bc))=D_{jk}(a)(bc).
\]
Thus $D_{jk}(a)$ annihilates all products $bc$ in $A_j$, and hence annihilates the
linear span $A_j^2$.  If $\overline{A_j^2}=A_j$, continuity of $D_{jk}(a)$ gives
$D_{jk}(a)=0$.  The final assertion follows from Lemma~\ref{lem:WA-essential}.
\end{proof}

%---------------------- The counterpart theorem ----------------------%
\begin{theorem}\label{thm:WA-counterpart}
Let $I$ be a non-empty set, let $E$ be a Banach sequence lattice on $I$, and let $(A_i)_{i\in I}$ be Banach algebras.
Set $A:=(\oplus_{i\in I}A_i)_E$.

\begin{enumerate}[label={\rm(\arabic*)},leftmargin=3.2em]
\item If $A$ is weakly amenable, then each $A_i$ is weakly amenable.
\item If every $A_i$ is commutative and weakly amenable, then $A$ is weakly amenable.
(In fact every derivation $A\to A^*$ is zero.)
\item Assume $I$ is infinite and $E=\ell_p(I)$ for some $1<p<\infty$.
Let $B$ be a weakly amenable \emph{non-commutative} Banach algebra, and form the constant family $A_i=B$.
Then the algebra $(\oplus_{i\in I}B)_{\ell_p(I)}$ is \emph{not} weakly amenable.
\end{enumerate}
\end{theorem}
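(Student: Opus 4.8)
The plan is to construct, for each fixed pair of distinct indices---say $i=1,2$---a bounded derivation $D\colon A\to A^*$ that is not inner. The underlying idea is that on each coordinate copy of $B$ there is a nonzero inner derivation (since $B$ is noncommutative, pick $\phi_0\in B^*$ with $\ad_{\phi_0}\neq 0$ on $B$), and these coordinatewise derivations can be assembled with $\ell_{p'}$-summable weights to produce a genuine derivation into $A^*\cong(\oplus_i B^*)_{\ell_{p'}(I)}$ (using the standard duality $E^*\cong\ell_{p'}$ for $E=\ell_p$, which identifies $A^* = (\oplus_{i\in I} A_i)_{\ell_p}^*$ with $(\oplus_{i\in I}A_i^*)_{\ell_{p'}}$). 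Concretely, fix a sequence $(\lambda_i)_{i\in I}\in\ell_{p'}(I)\setminus\ell_1(I)$ of positive scalars (possible precisely because $1<p'<\infty$, so $\ell_{p'}\supsetneq\ell_1$), and define $D$ coordinatewise by $D(a) = \bigl(\lambda_i\,\ad_{\phi_0}(a_i)\bigr)_{i\in I}$, where $a=(a_i)$. First I would verify this is a bounded operator $A\to A^*$: since $\|\ad_{\phi_0}(a_i)\|\le 2\|\phi_0\|\|a_i\|$, the family $(\lambda_i\ad_{\phi_0}(a_i))_i$ is $\ell_{p'}$-summable with norm controlled by $2\|\phi_0\|\,\|(\lambda_i)\|_{\ell_{p'}}\cdot\sup_i\|a_i\| \le 2\|\phi_0\|\,\|(\lambda_i)\|_{\ell_{p'}}\,\|a\|_A$ (using that the $\ell_p$-norm dominates the sup norm). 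Then I would check the derivation identity: because multiplication in $A$ and the bimodule actions on $A^*$ are all coordinatewise, and because each coordinate map $a_i\mapsto\lambda_i\ad_{\phi_0}(a_i)$ is a derivation $B\to B^*$, the identity $D(ab)=a\cdot D(b)+D(a)\cdot b$ reduces to the corresponding identity in each coordinate, which holds.

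The second half is to show $D$ is not inner. Suppose $D=\ad_\Psi$ for some $\Psi=(\psi_i)_{i\in I}\in A^*=(\oplus_i B^*)_{\ell_{p'}(I)}$, so in particular $(\|\psi_i\|)_i\in\ell_{p'}(I)$ and hence $\|\psi_i\|\to 0$ along the (cofinite) filter. Testing against elements supported in a single coordinate $i$: for $a=(0,\dots,a_i,\dots,0)$ we get $\ad_\Psi(a) = a\cdot\Psi-\Psi\cdot a$, and because all actions are coordinatewise this equals the element of $A^*$ supported at $i$ with value $\ad_{\psi_i}(a_i)$. Comparing with $D(a)$, whose $i$-th coordinate is $\lambda_i\ad_{\phi_0}(a_i)$, we conclude $\ad_{\psi_i} = \lambda_i\ad_{\phi_0}$ as derivations on $B=A_i$, for every $i$. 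Now the key point: the map $B^*/Z(B)^\perp \to \Der(B,B^*)$, $\phi\mapsto\ad_\phi$, is injective (where $Z(B)^\perp$ is the annihilator of the elements implementing zero inner derivation), and $\ad_{\phi_0}\ne 0$ means $\phi_0\notin Z(B)^\perp$; I would extract from this a single element $b\in B$ and a single functional-evaluation witnessing $\ad_{\phi_0}(b)\ne 0$, i.e.\ a number $\kappa:=\|\ad_{\phi_0}(b)\|>0$ and correspondingly $\|\ad_{\psi_i}(b)\| = \lambda_i\kappa$. But also $\|\ad_{\psi_i}(b)\|\le 2\|\psi_i\|\,\|b\|$, so $\lambda_i\kappa \le 2\|b\|\,\|\psi_i\|$, giving $\lambda_i \le (2\|b\|/\kappa)\,\|\psi_i\|$ for all $i$. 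Since $(\|\psi_i\|)_i\in\ell_{p'}(I)$ this is consistent so far; the contradiction must instead come from $\ell_1$-failure, so I would sharpen the argument.

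The sharper route is to exploit that an inner derivation $\ad_\Psi$ with $\Psi\in A^*$ must have $\Psi$ in a fixed dual space, whereas the family $(\lambda_i)$ was only chosen in $\ell_{p'}\setminus\ell_1$; the obstruction is that matching $\ad_{\psi_i}=\lambda_i\ad_{\phi_0}$ forces $\psi_i$ to agree with $\lambda_i\phi_0$ \emph{modulo} $Z(A_i)^\perp$, and one then needs the modification to still be $\ell_{p'}$-summable---which it is---so that no contradiction arises from summability alone. Hence the correct choice is $(\lambda_i)\notin c_0(I)$: take $\lambda_i = 1$ for all $i$ (or bounded below), so that $(\lambda_i)\notin c_0(I)$ but we must then re-examine boundedness of $D$. \textbf{This is the main obstacle and forces the real argument.} The resolution, following Koczorowski--Piszczek, is to localise the unboundedness: instead of a single $\ad_{\phi_0}$ on every coordinate, use derivations that "cascade" across pairs of coordinates. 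Precisely, I would fix $b,c\in B$ with $bc\ne cb$, and build $D$ so that its value involves cross terms between coordinate $i$ and coordinate $i+1$ in an enumeration of a countable subset of $I$, producing a telescoping phenomenon: if $D=\ad_\Psi$ then the partial sums $\sum_{i\le n}(\text{something})$ would have to stay bounded (from $\Psi\in A^*$, via bimonotonicity-type estimates on $\ell_{p'}$) while simultaneously the derivation identity forces them to grow. The hard part will be choosing the weights and the coordinate-coupling pattern so that (i) $D$ is bounded $A\to A^*$ for $E=\ell_p$ with $1<p<\infty$ (this uses $p>1$ crucially, via Hölder against $\ell_{p'}$), and (ii) innerness yields a divergent series (this uses $p<\infty$, via $\ell_{p'}\not\supseteq\ell_\infty$, equivalently $\ell_{p'}$-sequences tend to $0$). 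I would then present the construction explicitly for $I$ countable and reduce the general case to a countable subset, noting that $(\oplus_{i\in I}B)_{\ell_p}$ contains a complemented copy of $(\oplus_{n\in\N}B)_{\ell_p}$ and that weak amenability passes to complemented ideal-type subalgebras in the relevant sense---or more simply, restrict a hypothetical implementing functional and derive the contradiction on the countable sub-sum directly.
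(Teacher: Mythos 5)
Your proposal only treats part (3); parts (1) and (2) are not addressed at all (the paper derives (1) from the transfer $d\mapsto\pi_i^*\circ d\circ\pi_i$ and (2) from orthogonality of the coordinate ideals together with $\overline{A_i^2}=A_i$ from Lemma~\ref{lem:WA-essential}). For (3) your skeleton is exactly the paper's: identify $A^*$ with $(\oplus_i B^*)_{\ell_q(I)}$, $q=p/(p-1)$, take a weighted coordinatewise derivation $D(a)=(w_i\,\ad_{\phi_0}(a_i))_i$, and observe that innerness $D=\ad_\Psi$ forces $\psi_i-w_i\phi_0$ to implement the zero inner derivation on $B$, whence $\|\psi_i\|\ge d\,|w_i|$ with $d=\mathrm{dist}(\phi_0,\{\phi:\ \ad_\phi=0\})>0$. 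But you stop at the decisive point. The correct requirement on the weights is not ``$(w_i)\in\ell_q\setminus\ell_1$'' versus ``$(w_i)\notin c_0$''; it is that the multiplication operator $M_w\colon\ell_p(I)\to\ell_q(I)$ be bounded while $w\notin\ell_q(I)$. These two conditions are simultaneously satisfiable by a bare choice of scalars, and no cascading or telescoping construction is needed. For $1<p\le 2$ one has $q\ge 2\ge p$, so $\ell_p\hookrightarrow\ell_q$ and the constant weight $w\equiv 1$ on a countably infinite subset already works: $D$ is bounded, yet $\|\psi_{i_n}\|\ge d$ for all $n$ contradicts $(\|\psi_i\|)_i\in\ell_q(I)$. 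You actually reach this choice and then abandon it, wrongly treating boundedness of $D$ as an obstacle in this range. For $p>2$ take $w_{i_n}=n^{-1/q}$: then $w\in\ell_r(I)$ with $r=p/(p-2)$ and $1/q=1/p+1/r$, so H\"older gives boundedness of $M_w$, while $\sum_n w_{i_n}^q=\sum_n n^{-1}=\infty$ shows $w\notin\ell_q(I)$. The room to manoeuvre comes from $r>q$, i.e.\ $\ell_r\setminus\ell_q\neq\emptyset$.

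The final paragraph of your proposal --- coupling adjacent coordinates, telescoping partial sums, ``bimonotonicity-type estimates'' on $\ell_q$ --- is never carried out: no weights are specified, no boundedness estimate is proved, and no divergent series is exhibited. As written it is a sketch of a different (and unnecessary) construction, so the proposal has a genuine gap precisely where the argument must produce the weights.
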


\begin{proof}
\emph{(1)} This is Proposition~\ref{prop:WA-to-summands}.

\smallskip

\emph{(2)} Let $D\colon A\to A^*$ be a bounded derivation.
Since $A$ is commutative, inner derivations vanish (Remark~\ref{rem:WA-commutative}), so it suffices to prove $D=0$.

By Lemma~\ref{lem:WA-essential}, each summand satisfies $\overline{A_i^2}=A_i$.
Hence Lemma~\ref{lem:off-diagonal} shows that, for $j\in I$ and $a\in A_j$, the
functional $D(\iota_j(a))$ annihilates every coordinate ideal $\iota_i(A_i)$ with
$i\neq j$.

It remains to look at the diagonal coefficient.  Define
\[
d_j:=\iota_j^*\circ D\circ \iota_j\colon A_j\to A_j^*.
\]
Then $d_j$ is a derivation.  Since $A_j$ is commutative and weakly amenable, $d_j=0$.
Thus $D(\iota_j(a))$ also annihilates $\iota_j(A_j)$.  Consequently
$D(\iota_j(a))$ annihilates the algebraic span of all coordinate ideals, which is dense in
$A$ by Lemma~\ref{lem:dense-finite-support}; by continuity of the functional
$D(\iota_j(a))$, it is zero.  We have shown that $D$ vanishes on every coordinate ideal,
hence on the dense subalgebra of finitely supported elements.  Since $D$ is continuous,
$D=0$ on $A$.

\smallskip
\emph{(3)} Let $1<p<\infty$ and put $q=p/(p-1)$.
Choose $\psi\in B^*$ with $\psi\notin \Zc(B^*)$; equivalently $\ad_\psi\neq 0$.
Set $d:=\mathrm{dist}(\psi,\Zc(B^*))>0$.

Choose a countably infinite subset $\{i_n:n\in\mathbb N\}\subset I$.
Define a scalar weight $w=(w_i)_{i\in I}$ by
\[
w_{i_n}:=
\begin{cases}
1, & 1<p\le 2,\\[2pt]
n^{-1/q}, & p>2,
\end{cases}
\qquad\text{and}\qquad
w_i:=0\ \ (i\notin \{i_n:n\in\mathbb N\}).
\]
Then $w\notin \ell_q(I)$, but multiplication by $w$ defines a bounded operator
$M_w\colon \ell_p(I)\to \ell_q(I)$:
for $1<p\le 2$ this is just the continuous embedding $\ell_p\hookrightarrow \ell_q$;
for $p>2$ it follows from H\"older since $w\in \ell_r(I)$ with $r=p/(p-2)$ and $1/q=1/p+1/r$.

Now define $D\colon \ell_p(I,B)\to \ell_q(I,B^*)\cong (\ell_p(I,B))^*$ by
\[
D\bigl((b_i)_{i\in I}\bigr):=\bigl(w_i\,\ad_\psi(b_i)\bigr)_{i\in I}.
\]
Since $\|\ad_\psi(b_i)\|\le 2\|\psi\|\,\|b_i\|$, boundedness of $M_w$ yields that $D$ is a bounded derivation.

Suppose, towards a contradiction, that $D$ is inner: $D=\ad_\Phi$ for some $\Phi\in \ell_q(I,B^*)$.
Restricting to the $i$th coordinate ideal (via the canonical embedding/projection) gives
\[
\ad_{\Phi_i}=w_i\,\ad_\psi\qquad (i\in I),
\]
hence $\Phi_i-w_i\psi\in \Zc(B^*)$ for each $i$.
Therefore
\[
\|\Phi_i\|\ge \mathrm{dist}(w_i\psi,\Zc(B^*))=|w_i|\,\mathrm{dist}(\psi,\Zc(B^*))=d\,|w_i|.
\]
It follows that $(\|\Phi_i\|)_{i\in I}$ dominates $d(|w_i|)_{i\in I}$ and hence cannot lie in $\ell_q(I)$ because $w\notin\ell_q(I)$.
This contradicts $\Phi\in\ell_q(I,B^*)$. Thus $D$ is not inner and $\ell_p(I,B)$ is not weakly amenable.
\end{proof}

%---------------------- The $c_0$-type regime ----------------------%
\begin{lemma}\label{lem:WAM-c0}
Let $(A_i)_{i\in I}$ be Banach algebras and set $A:=\big(\bigoplus_{i\in I}A_i\big)_{c_0(I)}$.
Then
\[
\WAM(A)=\sup_{i\in I}\WAM(A_i),
\]
with the convention that the right-hand side is $+\infty$ if any $A_i$ is not weakly amenable.
\end{lemma}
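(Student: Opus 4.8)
The plan is as follows. The inequality $\WAM(A)\ge\sup_{i\in I}\WAM(A_i)$ is immediate from Proposition~\ref{prop:WA-to-summands}, because $\|\delta_i\|_{c_0(I)}=1$; in particular, if some $A_i$ is not weakly amenable then neither is $A$ (by the contrapositive of part~(1) of Theorem~\ref{thm:WA-counterpart}), which matches the stated convention. For the reverse inequality I would assume $S:=\sup_{i\in I}\WAM(A_i)<\infty$, so that each $A_i$ is weakly amenable, and work with the standard isometric identification $A^*\cong\bigl(\bigoplus_{i\in I}A_i^*\bigr)_{\ell_1(I)}$ under the pairing $\langle(\phi_i)_i,(a_i)_i\rangle=\sum_i\phi_i(a_i)$; with respect to it the bimodule actions are coordinatewise, $(a\cdot\phi)_i=a_i\cdot\phi_i$ and $(\phi\cdot a)_i=\phi_i\cdot a_i$. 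Given a bounded derivation $D\colon A\to A^*$, the goal is to build an implementer $\Phi\in A^*$ with $\|\Phi\|\le S\|D\|$.

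First I would show that $D$ is \emph{diagonal}. For each $i$, set $d_i:=\iota_i^*\circ D\circ\iota_i\colon A_i\to A_i^*$; since $\iota_i\colon A_i\to A$ is a homomorphism and $\iota_i^*$ intertwines the module actions (a routine check gives $\iota_i^*(\iota_i(a)\cdot\psi)=a\cdot\iota_i^*(\psi)$, and symmetrically), $d_i$ is a bounded derivation with $d_i(a_i)$ equal to the $i$-th coordinate of $D(\iota_i(a_i))$. For $i\ne j$ one has $\iota_i(a_i)\,\iota_j(b_j)=0$; applying $D$ and noting that $\iota_i(a_i)\cdot\psi$ is supported at coordinate $i$ while $\psi\cdot\iota_j(b_j)$ is supported at $j$, the two resulting summands must vanish separately, which forces $D(\iota_i(a_i))_j$ to annihilate $A_j^2$, hence $D(\iota_i(a_i))_j=0$ by Lemma~\ref{lem:WA-essential} applied to the weakly amenable algebra $A_j$. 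Consequently $D(\iota_i(a_i))$ is supported at coordinate $i$ with value $d_i(a_i)$, and by linearity $D(a)_j=d_j(a_j)$ for every finitely supported $a$.

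The decisive step — and, I expect, the only genuinely delicate point — is a summability estimate for the coordinate derivations, which is exactly where the $c_0$-hypothesis (rather than $\ell_p$, $p>1$) enters. Because $A^*$ is an $\ell_1$-sum, for any finite $F\subseteq I$ one may pick near-optimal unit vectors $a_i\in A_i$ ($i\in F$) and form $a:=\sum_{i\in F}\iota_i(a_i)$, which has $\|a\|\le1$; then $\sum_{i\in F}\|d_i\|\le\|D(a)\|+\eta\le\|D\|+\eta$, and letting $\eta\downarrow0$ and taking the supremum over $F$ yields $\sum_{i\in I}\|d_i\|\le\|D\|<\infty$. (For $\ell_p$ with $p>1$ the dual is an $\ell_q$-sum with $q<\infty$, so the coordinate implementers need only be $\ell_q$-summable, and indeed Theorem~\ref{thm:WA-counterpart}(3) shows weak amenability then genuinely fails; this is also why one cannot simply mimic the Johnson-amenability proof via a dense-union permanence result, weak amenability being unstable under directed dense unions.)

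It then remains to reassemble a global implementer. For each $i$, weak amenability of $A_i$ with constant at most $S$ provides $\psi_i\in A_i^*$ with $\ad_{\psi_i}=d_i$ and, passing to the infimum over the coset $\{\phi:\ad_\phi=d_i\}=\psi_i+\Zc(A_i^*)$, one gets $\operatorname{dist}(\psi_i,\Zc(A_i^*))\le S\|d_i\|$. Since $\sum_i\|d_i\|<\infty$, only countably many $d_i$ are non-zero; for those I would choose $\Phi_i$ in the above coset with $\|\Phi_i\|\le S\|d_i\|+\varepsilon_i$ where $\sum_i\varepsilon_i<\varepsilon$, and set $\Phi_i:=0$ when $d_i=0$. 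Then $\Phi:=(\Phi_i)_i\in\bigl(\bigoplus_i A_i^*\bigr)_{\ell_1}=A^*$ with $\|\Phi\|\le S\|D\|+\varepsilon$, and since $\ad_\Phi(a)_j=\ad_{\Phi_j}(a_j)=d_j(a_j)=D(a)_j$ for every finitely supported $a$, continuity together with the density of finitely supported elements (Lemma~\ref{lem:dense-finite-support}) gives $D=\ad_\Phi$. As $\varepsilon>0$ is arbitrary, $\WAM(A)\le S$, which combined with the first paragraph completes the proof.
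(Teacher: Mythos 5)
Your proof is correct and follows essentially the same route as the paper's: the lower bound via Proposition~\ref{prop:WA-to-summands}, and for the upper bound the same three steps — diagonalising $D$ using the orthogonality of the coordinate ideals together with Lemma~\ref{lem:WA-essential}, the $\ell_1$-summability estimate $\sum_{i}\|d_i\|\le\|D\|$ obtained by testing on a finitely supported sup-norm-one element, and reassembly of the implementing functional. Your explicit identification $A^*\cong\bigl(\bigoplus_i A_i^*\bigr)_{\ell_1}$ with coordinatewise module actions, and the $\varepsilon_i$ bookkeeping for the possibly non-attained infimum in Definition~\ref{def:WAM}, make the write-up slightly cleaner, but the argument is the same.
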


\begin{proof}
For each $i$, the coordinate projection $\pi_i\colon A\to A_i$ is contractive and
$\|\iota_i\|=1$.  Proposition~\ref{prop:WA-to-summands} therefore gives
$\WAM(A_i)\le \WAM(A)$, and hence
\[
\sup_{i\in I}\WAM(A_i)\le \WAM(A).
\]

Conversely, assume that $S:=\sup_{i\in I}\WAM(A_i)<\infty$, and let
$D\colon A\to A^*$ be a bounded derivation.  We use the standard identification
\[
A^*=\ell_1\text{-}\bigoplus_{i\in I} A_i^* .
\]
For each $i\in I$ set
\[
d_i:=\iota_i^*\circ D\circ \iota_i\colon A_i\to A_i^* .
\]
Then $d_i$ is a bounded derivation.  Fix a number $C>S$.  By the definition of
$\WAM(A_i)$, choose $\phi_i\in A_i^*$ such that
\[
d_i=\ad_{\phi_i},\qquad \|\phi_i\|\le C\|d_i\|\qquad (i\in I).
\]

We claim that $(\|d_i\|)_{i\in I}\in\ell_1(I)$ and
\[
\sum_{i\in I}\|d_i\|\le \|D\| .
\]
It is enough to prove the corresponding finite estimate.  Let $F\subseteq I$ be a non-empty
finite set and let $\varepsilon>0$.  For each $i\in F$ choose $a_i\in A_i$ with $\|a_i\|\le 1$ and
$\|d_i(a_i)\|\ge \|d_i\|-\varepsilon/|F|$.
Put $a:=\sum_{i\in F}\iota_i(a_i)$.  Then $\|a\|\le 1$.  Since each $A_j$ is weakly
amenable, Lemma~\ref{lem:WA-essential} gives $\overline{A_j^2}=A_j$, and
Lemma~\ref{lem:off-diagonal} shows that $D(\iota_i(a_i))$ is supported on the $i$th
coordinate.  Therefore, using the $\ell_1$-norm in $A^*$,
\[
\|D\|\ge \|D(a)\|=
\sum_{i\in F}\|\iota_i^*D\iota_i(a_i)\|
=\sum_{i\in F}\|d_i(a_i)\|
\ge \sum_{i\in F}\|d_i\|-\varepsilon.
\]
Letting $\varepsilon\downarrow0$ and then taking the supremum over finite $F$ proves the claim.

It follows that $\Phi:=(\phi_i)_{i\in I}$ belongs to $A^*$ and
\[
\|\Phi\|=\sum_{i\in I}\|\phi_i\|\le C\sum_{i\in I}\|d_i\|\le C\|D\|.
\]
For $a\in A_i$, the derivations $D$ and $\ad_\Phi$ agree on $\iota_i(a)$: both have no
off-diagonal coordinates by Lemma~\ref{lem:off-diagonal}, and their $i$th coordinate is
$d_i(a)=\ad_{\phi_i}(a)$.  Hence $D=\ad_\Phi$ on the algebraic span of the coordinate
ideals, which is dense in $A$.  By continuity, $D=\ad_\Phi$ on all of $A$.
Thus $\WAM(A)\le C$.  Since $C>S$ was arbitrary, $\WAM(A)\le S$.
\end{proof}

\begin{corollary}[Weak amenability in the $c_0$-type regime]\label{cor:WAM-CE}
Let $E$ be a Banach sequence lattice on $I$ with $C_E<\infty$.
Let $(A_i)_{i\in I}$ be Banach algebras and set $A=(\oplus_{i\in I}A_i)_E$.
Then $A$ is weakly amenable if and only if $\sup_{i\in I}\WAM(A_i)<\infty$.
Moreover, in that case
\[
C_E^{-1}\sup_{i\in I}\WAM(A_i)\ \le\ \WAM(A)\ \le\ C_E^2\,\sup_{i\in I}\WAM(A_i).
\]
If, in particular, $\|\delta_i\|_E=1$ for all $i\in I$ (as for the usual $c_0(I)$ norm), then the lower
bound improves to $\sup_i\WAM(A_i)\le\WAM(A)$.
For an Orlicz heart $E=h_\varphi(I)$ with $a_\varphi>0$ (so $C_E<\infty$ by Proposition~\ref{prop:orlicz-CE}, after the harmless normalisation from Remark~\ref{rem:orlicz-normalisation}),
weak amenability of $(\oplus_i A_i)_{h_\varphi}$ is equivalent to uniform weak amenability of the summands.
\end{corollary}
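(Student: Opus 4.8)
The plan is to reduce both inequalities to the $c_0$-case already established — Proposition~\ref{prop:WA-to-summands} for the lower bound, Lemma~\ref{lem:WAM-c0} for the upper bound — exploiting that $C_E<\infty$ forces $E$ to be a renorming of $c_0(I)$ (Lemma~\ref{lem:CE-equivalence}): thus $A=(\bigoplus_{i\in I}A_i)_E$ and $A_0:=(\bigoplus_{i\in I}A_i)_{c_0(I)}$ are the \emph{same} Banach algebra carried by equivalent norms. The device replacing the quotient/dense-union machinery of Theorem~\ref{thm:main} is that weak amenability behaves well under \emph{bicontinuous algebra isomorphisms}: if $q\colon B\to C$ is such a map, then a derivation $D\colon C\to C^*$ pulls back to the derivation $q^*\circ D\circ q\colon B\to B^*$ of norm $\le\|q\|^2\|D\|$, and if this equals $\ad_\Psi$ then $D=\ad_{(q^{-1})^*\Psi}$; tracking norms gives $\WAM(C)\le\|q\|^2\|q^{-1}\|\,\WAM(B)$.

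For the lower bound I would run the proof of Proposition~\ref{prop:WA-to-summands} with the \emph{sharp} norms of the coordinate maps. Solidity gives $\|a_i\|\,\|\delta_i\|_E\le\|a\|$ for $a\in A$, so $\|\pi_i\|=\|\pi_i^*\|=\|\delta_i\|_E^{-1}$ while $\|\iota_i\|=\|\iota_i^*\|=\|\delta_i\|_E$. For a bounded derivation $d\colon A_i\to A_i^*$ the derivation $D:=\pi_i^*\circ d\circ\pi_i$ on $A$ then has $\|D\|\le\|d\|\,\|\delta_i\|_E^{-2}$, so writing $D=\ad_\Phi$ with $\|\Phi\|\le\WAM(A)\|D\|$ and putting $\phi:=\iota_i^*\Phi$ (whence $d=\ad_\phi$ after the routine compatibility check) yields $\|\phi\|\le\WAM(A)\,\|d\|\,\|\delta_i\|_E^{-1}$. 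Since $\|\delta_i\|_E\ge1$ by Definition~\ref{def:sequence-lattice}(c), this gives $\WAM(A_i)\le\WAM(A)$, hence $\sup_{i\in I}\WAM(A_i)\le\WAM(A)$. (If some $A_i$ is not weakly amenable, both sides of the claimed estimate are $+\infty$, by this same Proposition, so there is nothing to prove.)

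For the upper bound, Lemma~\ref{lem:CE-equivalence} identifies the underlying sets of $A$ and $A_0$, and the identity map $q\colon A_0\to A$ satisfies $\|q\|\le C_E$ and $\|q^{-1}\|\le1$ because $\|x\|_\infty\le\|x\|_E\le C_E\|x\|_\infty$ on $c_0(I)$. Combining the isomorphism principle with Lemma~\ref{lem:WAM-c0},
\[
\WAM(A)\ \le\ \|q\|^2\|q^{-1}\|\,\WAM(A_0)\ \le\ C_E^2\,\WAM(A_0)\ =\ C_E^2\,\sup_{i\in I}\WAM(A_i).
\]
The right-hand side is finite precisely when $\sup_{i\in I}\WAM(A_i)<\infty$, which together with the lower bound gives both the two-sided estimate and the `if and only if'. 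The Orlicz statement follows at once: for $E=h_\varphi(I)$ with $a_\varphi>0$ one has $C_E=1/a_\varphi<\infty$ by Proposition~\ref{prop:orlicz-CE}(c), so the equivalence applies verbatim.

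The main obstacle is conceptual rather than computational: the route used for Johnson amenability (Lemma~\ref{lem:quotient} together with the dense-union Lemma~\ref{lem:dense-union}) is unavailable here, since weak amenability is not inherited by quotients. One must bypass it by the renorming reduction to $c_0(I)$ and then be scrupulous with the norm bookkeeping — the factors $\|q\|,\|q^{-1}\|$ in the upper bound and $\|\iota_i^*\|,\|\pi_i^*\|$ in the lower bound — so that the constants emerge as exactly $\WAM(A)$ on the left and $C_E^2\sup_{i\in I}\WAM(A_i)$ on the right, with no spurious extra factors of $C_E$.
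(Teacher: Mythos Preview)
Your argument is correct and, for the upper bound, follows the paper's route exactly: use Lemma~\ref{lem:CE-equivalence} to identify $A$ with $A_0:=(\bigoplus_i A_i)_{c_0(I)}$ up to a bicontinuous algebra isomorphism, transport weak amenability along this isomorphism with the conjugation constant $\|q\|^2\|q^{-1}\|$, and invoke Lemma~\ref{lem:WAM-c0}. (Your conjugation constant $\|q\|^2\|q^{-1}\|$ is in fact the sharp one; the paper writes the cruder $\|T\|^2\|T^{-1}\|^2$, but since $\|T\|\le 1$ both collapse to $C_E^2$ here.)

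Where you genuinely differ is in the lower bound. The paper's ``similarly in the other direction'' via the isomorphism can only yield $\sup_i\WAM(A_i)=\WAM(A_0)\le C_E\,\WAM(A)$, and its Proposition~\ref{prop:WA-to-summands} gives only $\WAM(A_i)\le\|\delta_i\|_E\,\WAM(A)$, neither of which is the stated $\sup_i\WAM(A_i)\le\WAM(A)$. Your observation that solidity in fact forces the \emph{exact} value $\|\pi_i\|=\|\delta_i\|_E^{-1}$ (refining the paper's $\|\pi_i\|\le 1$ from Proposition~\ref{prop:Esum-algebra}) is precisely what cancels the factor $\|\iota_i^*\|=\|\delta_i\|_E$ picked up when pulling $\Phi$ back to $A_i^*$, and recovers the clean lower bound without a stray $C_E$. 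This is a genuine improvement on the paper's presentation.
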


\begin{proof}
If $C_E<\infty$, then $E$ coincides with $c_0(I)$ as a set and the two norms are equivalent:
\[
\|x\|_\infty \le \|x\|_E \le C_E\|x\|_\infty \qquad (x\in E),
\]
by Lemma~\ref{lem:CE-equivalence}. Hence the identity map
\[
T\colon (\oplus_{i\in I}A_i)_E \longrightarrow (\oplus_{i\in I}A_i)_{c_0(I)}
\]
is an algebra isomorphism with $\|T\|\le 1$ and $\|T^{-1}\|\le C_E$.
A standard conjugation argument for derivations under algebra isomorphisms gives
\[
\WAM\bigl((\oplus_i A_i)_E\bigr)
\le \|T^{-1}\|^2\,\|T\|^2\,\WAM\bigl((\oplus_i A_i)_{c_0(I)}\bigr)
\le C_E^2\,\WAM\bigl((\oplus_i A_i)_{c_0(I)}\bigr).
\]
By Lemma~\ref{lem:WAM-c0}, the last term is
$C_E^2\sup_{i\in I}\WAM(A_i)$, giving the upper bound and one implication.

Conversely, Proposition~\ref{prop:WA-to-summands} gives
\[
\WAM(A_i)\le \|\delta_i\|_E\,\WAM(A)\le C_E\,\WAM(A)\qquad(i\in I),
\]
which gives the stated lower bound and shows that weak amenability of $A$ forces uniform weak amenability of the summands.
If $\|\delta_i\|_E=1$ for all $i$, the same estimate gives the sharper lower bound.
\end{proof}
\section*{Conflict of Interest and Data Availability}

\paragraph{Conflict of Interest.}
The authors declare that they have no conflict of interest.

\paragraph{Data Availability.}
No datasets were generated or analysed during the current study.

%====================== Bibliography ======================%

\end{document}